\newtheorem{prethm}{{\bf Theorem}}
\newenvironment{thm}{\begin{prethm}{\hspace{-0.5
               em}{\bf.}}}{\end{prethm}}
\newtheorem{prepro}[prethm]{Proposition}
\newenvironment{pro}{\begin{prepro}{\hspace{-0.5
               em}{\bf.}}}{\end{prepro}}
\newtheorem{prelem}[prethm]{Lemma}
\newenvironment{lem}{\begin{prelem}{\hspace{-0.5
               em}{\bf.}}}{\end{prelem}}
\newtheorem{precor}[prethm]{Corollary}
\newenvironment{cor}{\begin{precor}{\hspace{-0.5
               em}{\bf.}}}{\end{precor}}
\newtheorem{preremark}{{\bf Remark}}
\newtheorem{preexample}{{\bf Example}}
\newenvironment{example}{\begin{preexample}\em{\hspace{-0.5
               em}{\bf.}}}{\end{preexample}}
\newtheorem{predefinition}{{\bf Definition}}
\newenvironment{definition}{\begin{predefinition}\em{\hspace{-0.5
               em}{\bf.}}}{\end{predefinition}}
\newtheorem{preproof}{{\bf Proof.}}
\newenvironment{proof}[1]{\begin{preproof}{\rm
               #1}\hfill{$\Box$}}{\end{preproof}}
\author{{\normalsize { N. Ghareghani${}^{ \textrm{b},\,1}$}, { M. Mohammad-Noori${}^{ \textrm{a}}$}, {P. Sharifani ${}^{ \textrm{c}}$}\,
}\vspace{2mm} \\{\footnotesize{$^{ \textrm{a}}$\it
Department of
Mathematics, Statistics and Computer
Science, University of Tehran,}}\vspace{-2mm}\\{\footnotesize{\it   Tehran,
Iran}}\\
{\footnotesize{$^{ \textrm{b}}$\it
Department of
Industrial design, College of Fine Arts, University of Tehran,}}\\{\footnotesize{\it   Tehran,
Iran}}\\
{\footnotesize{$^{ \textrm{c}}$\it
Department of
Mathematics, Statistics and Computer
Science, University of Tehran,}}\\{\footnotesize{\it   Tehran,
Iran}}\\
\\{\footnotesize Emails: ghareghani@ut.ac.ir, ghareghani@ipm.ir},\vspace{-2mm}\\
 {\footnotesize  mmnoori@ut.ac.ir, pouyeh.sharifani@gmail.com.}
 }
\title{\bf\ Some Properties of the $k$-bonacci Words on Infinite Alphabet}
\date{}
\begin{document}

\maketitle
\footnotetext[1]{\tt Corresponding author}




\begin{abstract}
The Fibonacci word $W$ on an infinite alphabet was introduced in [Zhang et al., Electronic J. Combinatorics 2017 24(2), 2-52] as a fixed point of the morphism
$2i\rightarrow (2i)(2i+1)$, $(2i+1) \rightarrow (2i+2)$, $i\geq 0$.
Here, for any integer $k>2$, we define the infinite $k$-bonacci word $W^{(k)}$ on the infinite alphabet as the fixed point of the morphism
$\varphi_k$ on the alphabet $\mathbb{N}$ defined for any $i\geq 0$ and any $0\leq j\leq k-1$, as
\begin{equation*}
\varphi_k(ki+j) = \left\{
\begin{array}{ll}
(ki)(ki+j+1) & \text{if } j = 0,\cdots ,k-2,\\
(ki+j+1)& \text{otherwise}.
\end{array} \right.
\end{equation*}
We consider the sequence of finite words $(W^{(k)}_n)_{n\geq 0}$, where  $W^{(k)}_n$ is the prefix of $W^{(k)}$ whose length is the $(n+k)$-th $k$-bonacci number. We then provide a recursive formula for the number of palindromes occur in different positions of $W^{(k)}_n$.
Finally, we obtain the structure of all palindromes occurring in $W^{(k)}$ and based on this, we compute the palindrome complexity of $W^{(k)}$, for any $k>2$.
\end{abstract}
\vspace{3mm}
\noindent{\em Keywords}: factor of word, k-bonacci words, words on infinite alphabet, palindrome.

\section{Introduction}
Finite and infinite Fibonacci words are among the most studied ones in combinatorics of words and have important roles in computer science, based on their optimal properties and various applications, see for example \cite{morse1940symbolic, berstel1986fibonacci, tan2007some, de1981combinatorial}. The sequence of finite Fibonacci words $(F_n)_{n\geq -1}$ is given by $F_{-1}=1, F_0=0$ and the recurrence relation $F_n=F_{n-1}F_{n-2}$ which holds for $n\geq 1$.
An equivalent way to give these words for $n\geq 0$, is using $F_n=\psi^n(0)$, where $\psi$ is the binary morphism $0\rightarrow 01$, $1\rightarrow 0$. The infinite Fibonacci word is then given by $F_{\infty}={\displaystyle \lim _{n\rightarrow \infty}}F_n$ or equivalently by $F_{\infty}=\psi^{\omega}(0)$.

A natural extension of finite Fibonacci words to $k$-letter alphabet, $k\geq 2$, is defining finite $k$-bonacci words $(F_n^{(k)})_{n\geq 0}$ by
\begin{equation*}\label{defFrec}
F_n^{(k)} = \left\{
\begin{array}{ll}
0 \;\; &\text{if } \,\, n = 0,\\
F_{n-1}^{(k)}\ldots F_0^{(k)} n \;\; &\text{if } \,\, 1\leq n <k,\\
F_{n-1}^{(k)}\ldots F_{n-k}^{(k)} \;\; &\text{if } \,\, n\geq k.
\end{array} \right.
\end{equation*}
 Alternatively, these words may be given by $F_n^{(k)}=\psi_k(0)$, for $n\geq 0$, where $\psi_k: {\mathbb{N}}^* \rightarrow {\mathbb{N}}^*$ is the morphism
\begin{equation}\label{psidef}
\psi_k(i) = \left\{
\begin{array}{ll}
0 (i+1)\;\; &\text{if } \,\, i = 0,\cdots ,k-2, \\
0\;\; &\text{if } \,\, i = k-1.
\end{array} \right.
\end{equation}
 The infinite $k$-bonacci word is then given by $F^{(k)}_{\infty}={\displaystyle \lim _{n\rightarrow \infty}}F^{(k)}_n$ or equivalently by $F^{(k)}_{\infty}=\psi_k^{\omega}(0)$. We remark that another way to define the finite $k$-bonacci words, is using $F_n^{(k)}=\psi_k^n(0)$.

The infinite Fibonacci word belongs to the class of infinite aperiodic binary words having the minimal complexity (i.e. the minimal number of factors of each given length); Any such word is called a Sturmian word. Sturmian words are well-studied in the literature; They admit some equivalent definitions and have several interesting properties, see \cite{berstel2007sturmian, justin2000return, justin1996combinatorial} for instance.

While the infinite Fibonacci word is the simplest example of Sturmian words, the infinite $k$-bonacci word is similarly related to the most natural extension of Sturmian words, namely episturmian words. More precisely, the $k$-bonacci word is the simplest example of non-ultimately periodic episturmian words
 on the $k$-letter alphabet; To see the definition and properties of episturmian words see \cite{droubay2001episturmian, berstel2007sturmian, justin2002episturmian, glen2009episturmian, fischler2006palindromic}.

The infinite Fibonacci word over the infinite alphabet of nonnegative integers, $\mathbb{N}$, denoted here as $W^{(2)}$, is presented in \cite{zhang2017some} as the fixed point of the morphism $\varphi_2$ given by $\varphi_2(2i)=(2i)(2i+1)$ and $\varphi_2(2i+1)=2i+2$ for $i\geq 0$. More precisely, we have $W^{(2)}=\varphi_2^{\omega}(0)$. The authors of \cite{zhang2017some} have also studied the finite Fibonacci words over $\mathbb{N}$, namely the words $W_n^{(2)}=\varphi_2^{n}(0)$. It is obvious that when the digits of $W_n^{(2)}$ and $W^{(2)}$ are calculated$\mod 2$, these words are reduced to
$F_n$ and $F_{\infty}$, respectively. Among several properties of words $W_n^{(2)}$ and $W^{(2)}$ studied in \cite{zhang2017some}, the authors  characterized palindrome factors of $W_n^{(2)}$ and $W^{(2)}$. Particularly, the authors showed that in contrast to the ordinary infinite Fibonacci word which contains palindrome factors of arbitrary length, the word $W^{(2)}$ has no palindrome of length greater than $3$. Some more properties of these words were consequently studied by Glen et al. in \cite{glen2019more}. Among other results, they computed the number of palindromes in $W_n^{(2)}$.

In this paper, we introduce finite and infinite $k$-bonacci words on infinite alphabet $\mathbb{N}$, denoted respectively as $W_n^{(k)}$ and $W^{(k)}$. We study some properties of these words as well as the ones of the generating morphism $\varphi_k$, defined later in this paper. Based on these, we characterize the palindrome factors of $W^{(k)}$ for any fixed integer $k\geq 3$. More precisely we show that the length of a palindrome factor of $W^{(k)}$ belongs to the set
$L_k=\{2,3,5,\ldots, 3.2^{k-1}-1\}$. Conversely, for element $\ell$ of $L_k$ we give the structure of palindromes of  $W^{(k)}$ with length $\ell$. We also
enumerate the total number of palindromes of $W_n^{(k)}$.

\section{Definitions and notation}
In this paper, the alphabet, which can be a finite or a countable infinite set, is denoted as ${\mathcal A}$. When the alphabet is infinite,
we simply take ${\mathcal A}=\mathbb{N}$. Each element of the alphabet ${\mathcal A}$ is called a {\it letter}; When ${\mathcal A}=\mathbb{N}$, we equivalently use the term {\it digit} instead of letter. We denote by ${\mathcal A}^*$ the set of finite words over
${\mathcal A}$ and we let ${\mathcal A}^+ ={\mathcal A}^* \setminus \{\epsilon\}$, where $\epsilon$ the empty word. We denote by ${\mathcal A}^{\omega}$ the set of all infinite words over ${\mathcal A}$ and we let ${\mathcal A}^{\infty} ={\mathcal A}^*\cup {\mathcal A}^{\omega}$.
If $a\in{\mathcal A}$ and $W \in {\mathcal A}^{\infty}$, then the symbols $|W|$ and ${|W|}_a$ denote respectively the length of $W$, and the number of
occurrences of letter $a$ in $W$ (It is obvious that when $W\in {\mathcal A}^{\omega}$, $|W|=\infty$). For a word $W \in {\mathcal A}^{\infty}$, $\mathcal{A}lph(W)$ is defined to be the number of letters which have at least one occurrence in $W$, that is $\mathcal{A}lph(W)=\{a \in  {\mathcal A}: |W|_a>0\}$.

A word $V\in {\mathcal A}^+$
is a factor of a word $W \in {\mathcal A}^{\infty}$, denoted as $V \prec W$, if there exist
$U\in {\mathcal A}^*$ and $U' \in {\mathcal A}^{\infty}$, such that $W=UVU'$.
 A word $V\in {\mathcal A}^+$ (resp. $V\in {\mathcal A}^{\infty}$) is said to
be a {\textit prefix} (resp. {\textit suffix}) of a word $W\in {\mathcal A}^{\infty}$, denoted as $V\lhd W$ (resp.
$V\rhd W$), if there exists $U\in {\mathcal A}^{\infty}$ (resp. $U\in {\mathcal A}^{*}$) such that $W=VU$
(resp. $W=UV$). We denote the suffix (resp. prefix) $V$ of size $j$ of $W\in A^+$ by ${\rm Suff}_j(W)$ (resp. ${\rm Pref}_j(W)$). If $W\in {\mathcal A}^{*}$ and $W=VU$ (resp. $W=UV$,) we merely write
$V=WU^{-1}$ (resp. $V=U^{-1}W$).
For a finite word $W=w_1 w_2 \ldots w_{n}$, with $w_i \in {\mathcal A}$ and for $1\leq j\leq j'\leq n$, we denote $W[j,j']=w_j \ldots w_{j'}$, and for simplicity we denotes $W[j,j]$ by $W[j]$.
The {\textit
reversal} of a finite word $W=w_1 w_2 \ldots w_{n}$, with $w_i \in {\mathcal A}$,
 is $W^{R}=w_n w_{n-1}\ldots w_1$.
 A word $W\in {\mathcal A}^{*}$ is called {\textit palindrome} if
$W=W^{R}$. The set of all palindrome factors of the word $W\in {\mathcal A}^{\infty}$ is denoted by $\mathcal{P}al(W)$. When the alphabet is finite, for any word $U\in {\mathcal A}^{\infty}$, the number of palindrome factors of length $n$ of $U$, called the {\it palindrome complexity} of $U$, is denoted by ${\rm pal}_U(n)$ (for more information about the palindrome complexity see \cite{allouche2003palindrome, ambrovz2006palindromic} and the references therein).

 When the alphabet is infinite (i.e. ${\mathcal A}=\mathbb{N}$), the definition of palindrome complexity can naturally be extended to all words $U\in {\mathcal A}^{\infty}$ with the same notation. Let $P$ be a palindrome of odd length, then the letter $a\in {\mathcal A}$ for which $P=WaW^R$, is called the center of the palindrome $P$. If $P$ is a palindrome of even length, then the center of $P$ is defined to be the empty word.
 For any occurrence of a palindrome factor $P \in {\mathcal A}^*$ in a word $W \in {\mathcal A}^{\infty}$ such as $W=UPV$ with $U \in {\mathcal A}^*$, $V \in {\mathcal A}^{\infty}$, the center position of this occurrence of $P$ in $W$ is denoted by $c_p(P, W)$ and defined to be $|U|+\frac{|P|+1}{2}$. A palindrome factor $P\in {\mathcal A}$  of $W\in {\mathcal A}^*$ is called a {\it maximal palindrome factor} of $W$ if there is no longer palindrome factor of $W$ with the same center position.

For $1 \leq i \leq n$, let $U_i\in {\mathcal A}^{*}$; Then $\prod_{i=n}^{1} U_i$ is defined to be $U_n U_{n-1} \ldots   U_1$.
For a finite word $W$ and an integer $n$,  $n \oplus W$ denotes the word obtained by adding $n$ to each letter of $W$. For example, let $W=01023$ and $n=5$, then $n \oplus W=56578$. For a finite set $S=\{S_1, \ldots, S_m\}\subset {\mathcal A}^{+}$, we define $n\oplus S$ to be the set $\{n\oplus S_1, \ldots, n\oplus S_m\}$.

For any integer $k\geq 2$ the sequence of $k$-bonacci numbers, denoted by $\{f_n^{(k)}\}_{n\geq 0}$, is given as below
\begin{equation}\label{defkbonum}
f_n^{(k)} = \left\{
\begin{array}{ll}
0\;\; &\text{if } \,\, n = 0,\cdots ,k-2, \\
1\;\; &\text{if } \,\, n = k-1, \\
\sum_{i=n-1}^{n-k}f_i^{(k)}\;\; &\text{if}  \,\, n\geq k.
\end{array} \right.
\end{equation}
The sequences corresponding to $k=2$ and $k=3$ are respectively the well-known Fibonacci and Tribonacci sequences.
We define the finite (resp. infinite) $k$-bonacci words $W^{(k)}_n$ (resp. $W^{(k)}$) on infinite alphabet $\mathbb{N}$, using the morphism $\varphi_k$ given below
\begin{equation*}
\varphi_k(ki+j) = \left\{
\begin{array}{rl}
(ki)(ki+j+1)\;\; & \text{if } j = 0,\cdots ,k-2 \\
(ki+j+1)& \text{otherwise } .
\end{array} \right.
\end{equation*}
More precisely, $W^{(k)}_n=\varphi_k^n(0)$ and $W^{(k)}=\varphi_k^{\omega}(0)$ (Note that $W^{(k)}_0=F^{(k)}_0=0$).
  Consequently $F^{(k)}_n= W_n^{(k)}  \mod\; k$,  that is for a fixed value of $k$, the $k$-bonacci words over infinite alphabet are reduced to $k$-bonacci words over finite alphabet when the digits are calculated $\mod\; k$. It is easy to show that for $n \geq 0$,
\begin{equation}\label{size}
|F^{(k)}_n|= |W_n^{(k)}|=f_{n+k}^{(k)}.
\end{equation}
We end this section with two examples giving some initial $k$-bonacci words on finite and infinite alphabet.
\begin{example}\label{infkboexam} In this example the third $k$-bonacci words on finite and infinite alphabet are given when $k\leq 5$. The infinite Tribonacci words on finite and infinite alphabet are also shown.
 \begin{align*}
 &W_0^{(3)} = 0,
 &&F_0^{(3)} =0,\\
 &W_1^{(3)} = 01,
 &&F_1^{(3)} =01,\\
 &W_2^{(3)} =0102,
 &&F_2^{(3)} =0102,\\
 &W_3^{(3)} =0102013,
 &&F_3^{(3)} =0102010,\\
 &W_4^{(3)} =0102013010234,
 &&F_4^{(3)} =0102010010201,\\
 &W_5^{(3)} =010201301023401020133435,
 &&F_5^{(3)} =010201001020101020100102,\\
 \end{align*}
  The first  terms of $W^{(3)} $ and $\mathbf{F}^{(3)} $ are as below
  \begin{align*}
  W^{(3)} &=0\;1\;0\;2\;0\;1\;3\;0\;1\;0\;2\;0\;1\;3\;4\;0\;1\;0\;2\;0\;1\;3\;0\;1\;0\;2\;0\;1\;3\;4\; 0\;1\;0\;2\;0\;1\;3\;0\;1\;0\;2\;0\;1\;3\;4\;3\;5\;\cdots\\
   \mathbf{F}^{(3)} &= 0\;1\;0\;2\;0\;1\;0\;0\;1\;0\;2\;0\;1\;0\;1\;0\;1\;0\;2\;0\;1\;0\;0\;1\;0\;2\;0\;1\;0\;1\; 0\;1\;0\;2\;0\;1\;0\;0\;1\;0\;2\;0\;1\;0\;1\;0\;2\;\cdots\\
  \end{align*} 	
\end{example}
\begin{example} In this example we fix $n=6$ and consider different cases $k=3,4,5,6$.
 \begin{align*}
&W_6^{(3)} =01020130102340102013343501020130102343435346,\\
&F_6^{(3)} =01020100102010102010010201020100102010102010.\\
\\
&W_6^{(4)} =01020103010201401020103010245010201030102014010201034546,\\
&F_6^{(4)} =01020103010201001020103010201010201030102010010201030102.\\
\\
&W_6^{(5)} =0102010301020104010201030102015010201030102010401020103010256,\\
&F_6^{(5)} =0102010301020104010201030102010010201030102010401020103010201.\\
\\
&W_6^{(6)} =010201030102010401020103010201050102010301020104010201030102016,\\
&F_6^{(6)} =010201030102010401020103010201050102010301020104010201030102010.
\end{align*}
\end{example}
\section{Some properties of the morphism $\varphi_k$}
In this section we give some basic properties of the morphism  $\varphi_k$. We will later use these properties to discover the structure of palindromes in $k$-bonacci words. The following lemma states a property of the morphism $\varphi_k$ which can be easily deducted from the definition.

\begin{lem} \label{phideflem}
	For any integer $i\geq 0$,   $\varphi_k(k + i)=k\oplus \varphi_k(i)$.
\end{lem}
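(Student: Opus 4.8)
The plan is to reduce everything to the definition of $\varphi_k$ by writing $i$ in base-$k$ form. Concretely, I would use division with remainder to write $i = km+j$ with $m\geq 0$ and $0\leq j\leq k-1$. The key observation is then that $k+i = k(m+1)+j$, so adding $k$ to $i$ leaves the ``remainder digit'' $j$ unchanged and only increments the ``quotient'' from $m$ to $m+1$. Since the definition of $\varphi_k(ki+j)$ branches only on the value of $j$, the two letters $i$ and $k+i$ fall into the same branch, and this is what makes the identity work.

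Next I would carry out the two cases separately. If $0\leq j\leq k-2$, then $\varphi_k(i)=(km)(km+j+1)$, so $k\oplus\varphi_k(i) = (k(m+1))\,(k(m+1)+j+1)$; on the other hand $\varphi_k(k+i)=\varphi_k(k(m+1)+j) = (k(m+1))\,(k(m+1)+j+1)$, and the two agree. If $j=k-1$, then $\varphi_k(i)=\varphi_k(km+(k-1))$ is the single letter $km+k = k(m+1)$, so $k\oplus\varphi_k(i)=k(m+2)$; and $\varphi_k(k+i)=\varphi_k(k(m+1)+(k-1))$ is the single letter $k(m+1)+k = k(m+2)$, so again the two sides coincide.

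I expect there to be essentially no real obstacle here; the statement is a direct unfolding of the definition. The only point requiring a little care is the $j=k-1$ case, where one must notice that $ki+j+1$ is exactly $k(i+1)$, i.e.\ the ``otherwise'' clause of $\varphi_k$ sends $k(m{+}1){-}1$ to $k(m{+}1)$, so that the $\oplus k$ operation behaves correctly on a one-letter image. Once the base-$k$ bookkeeping is set up, both cases are a one-line computation, and the lemma follows.
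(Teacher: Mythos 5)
Your proof is correct and is exactly the ``easy deduction from the definition'' that the paper has in mind: the paper states this lemma without proof, remarking only that it follows directly from the definition of $\varphi_k$, and your base-$k$ case analysis ($i=km+j$ with $0\leq j\leq k-2$ versus $j=k-1$) is the intended verification.
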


\begin{thm}\label{growth}
	For any $n \geq 1$, we have
	$\varphi_k^n(ki + j) = \varphi_k^n(j) \oplus (ki)$
	for any $i, j \geq 0$.
\end{thm}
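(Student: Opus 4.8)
The natural approach is induction on $n$, using Lemma~\ref{phideflem} as the base-case engine and a careful bookkeeping of how $\oplus$ interacts with $\varphi_k$ at each stage.

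First I would establish the case $n=1$ directly. For $ki+j$ with $0\le j\le k-1$, I must check $\varphi_k(ki+j)=\varphi_k(j)\oplus(ki)$. Write $ki+j$ in the form $k i' + j'$ with $0\le j'\le k-1$; since $0\le j\le k-1$ we simply have $i'=i$, $j'=j$, so the definition of $\varphi_k$ gives $\varphi_k(ki+j)=(ki)(ki+j+1)$ when $j\le k-2$ and $(ki+j+1)$ when $j=k-1$. On the other side, $\varphi_k(j)$ is $(0)(j+1)$ or $(j+1)$ accordingly, and adding $ki$ to every letter yields exactly $(ki)(ki+j+1)$ or $(ki+j+1)$. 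So the base case is immediate from the definition; no appeal to Lemma~\ref{phideflem} is even needed, though the lemma is the special case $j=0$, $i=1$ iterated.

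For the inductive step, assume the statement for $n$ and prove it for $n+1$. Apply $\varphi_k$ to both sides of the inductive hypothesis: $\varphi_k^{n+1}(ki+j)=\varphi_k\bigl(\varphi_k^n(j)\oplus(ki)\bigr)$. The crux is the identity $\varphi_k\bigl(U\oplus(ki)\bigr)=\varphi_k^{?}\!\dots$— more precisely, I need $\varphi_k(U\oplus(km))=\varphi_k(U)\oplus(km)$ for any word $U$ and any integer $m\ge 0$. This is exactly the $m$-fold iterate of Lemma~\ref{phideflem}: the lemma says $\varphi_k(k+i)=k\oplus\varphi_k(i)$, i.e. shifting a single letter by $k$ commutes with $\varphi_k$; applying it $m$ times (or inducting on $m$) gives commutation with a shift by $km$; and since a morphism acts letter-by-letter, this extends from single letters to arbitrary words $U$. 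Granting this, $\varphi_k\bigl(\varphi_k^n(j)\oplus(ki)\bigr)=\varphi_k\bigl(\varphi_k^n(j)\bigr)\oplus(ki)=\varphi_k^{n+1}(j)\oplus(ki)$, which is the desired statement for $n+1$.

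The only genuine obstacle is the commutation lemma $\varphi_k(U\oplus(km))=\varphi_k(U)\oplus(km)$, and even this is routine: one first notes that $n\oplus(\cdot)$ and $\varphi_k$ both distribute over concatenation, so it suffices to verify the identity on a single letter $\ell$, i.e. $\varphi_k(\ell+km)=\varphi_k(\ell)\oplus(km)$; writing $\ell=ki'+j'$ with $0\le j'\le k-1$ gives $\ell+km=k(i'+m)+j'$, and the definition of $\varphi_k$ shows both sides equal $(k(i'+m))(k(i'+m)+j'+1)$ or $(k(i'+m)+j'+1)$ according to $j'\le k-2$ or $j'=k-1$. Thus the entire proof reduces to two mechanical case checks (for $j'\le k-2$ and $j'=k-1$) plus two inductions (on $m$ for the shift, on $n$ for the iterate). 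I would present the shift-commutation as an auxiliary claim, prove it in two lines, and then run the one-line induction on $n$.
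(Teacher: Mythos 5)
Your proposal is correct, but it organizes the induction differently from the paper. The paper writes $\varphi_k^{m+1}=\varphi_k^{m}\circ\varphi_k$, i.e.\ it peels off the \emph{innermost} application: it splits into the cases $j\equiv k-1\pmod k$ and $j\not\equiv k-1\pmod k$, expands $\varphi_k(ki+j)$ into one or two letters, applies the induction hypothesis (at level $m$) to each resulting letter, and reassembles using the fact that $\oplus(ki)$ distributes over concatenation; the case split is thus re-done inside the inductive step. You instead write $\varphi_k^{n+1}=\varphi_k\circ\varphi_k^{n}$, peeling off the \emph{outermost} application, and push all of the case analysis into a single auxiliary identity $\varphi_k(U\oplus(km))=\varphi_k(U)\oplus(km)$ for words $U$ --- a generalization of Lemma~\ref{phideflem} from a shift by $k$ on a letter to a shift by $km$ on a word --- after which the induction on $n$ is one line. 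Both arguments are sound; yours isolates the shift-equivariance of $\varphi_k$ once and for all (which is arguably the cleaner structural statement, and is reusable), at the cost of stating and proving one extra easy lemma, while the paper avoids any word-level lemma by working letter by letter. One small presentational point: in your base case you restrict to $0\le j\le k-1$, whereas the theorem allows arbitrary $j\ge 0$; this is harmless because the general base case $\varphi_k(ki+j)=\varphi_k(j)\oplus(ki)$ is exactly your auxiliary claim applied to the single letter $\ell=j$ with $m=i$, but you should say so explicitly rather than leave the general $j$ unaddressed there.
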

\begin{proof}
	{We use induction on $n$.
		When $n = 1$, the result follows from the definition of $\varphi_k$. Now suppose the result
		holds for all $n$, $1\leq n \leq m$; We will prove it for $n=m + 1$.

		\begin{itemize}
			\item If $j \equiv k-1 (\mod k)$, then		
			for any $i \geq 0$,
		\begin{align*}
		\varphi_k^{m+1}(ki + j)= &\varphi_k^{m}(ki + j+1)\\
		=&\varphi_k^{m}(j+1)\oplus (ki)\text{ by the induction hypothesis} \\
		=&\varphi_k^{m+1}(j)\oplus (ki).
		\end{align*}
		\item If $j \not\equiv k-1 (\mod k)$, then let $j=kj'+j''$ with $0\leq j'' \leq k-1$. For any $i \geq 0$,
		\begin{align*}
		\varphi_k^{m+1}(ki + j)= &\varphi_k^{m}((ki + kj') (ki + j+1))\\
	     = &\varphi_k^{m}(ki + kj') \varphi_k^{m}(ki + j+1)\\
		=&(\varphi_k^{m}(kj')\oplus (ki)) (\varphi_k^{m}(j+1)\oplus (ki))\,\,\,\,\text{ by the induction hypothesis} \\
		=&(\varphi_k^{m}(kj') \varphi_k^{m}(j+1))\oplus (ki)\\
		=&(\varphi_k^{m}((kj') (kj'+j''+1)))\oplus (ki)\\
        =&\varphi_k^{m}(\varphi_k(j))\oplus (ki)\\
		=&\varphi_k^{m+1}(j)\oplus (ki).
		\end{align*}
		\end{itemize}}
\end{proof}
The following theorem gives us the suffix of length two of the word $\varphi_k^n(0)$. The proof is based on some definitions and Lemmas come in the next section. So, we demonstrate the proof of this theorem later.
\begin{thm}\label{suffix} Let $n \geq 1$ and
	 $n \equiv j\;(mod\; k)$ and $0 \leq j \leq k-1$. Then
	\begin{itemize}
		\item $(n-j) n \rhd \varphi_k^n(0)$ provided $j\neq 0$.
		\item $(n-k+1) n \rhd \varphi_k^n(0)$ provided $j=0$.
	\end{itemize}
\end{thm}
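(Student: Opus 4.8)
The plan is to prove both statements simultaneously by induction on $n$, tracking the last letter produced together with the letter immediately before it. The base cases are $n=1,\dots,k$: for these one reads off $\varphi_k^n(0)$ directly (as in Example~\ref{infkboexam}) and checks that $\varphi_k^1(0)=01$ ends in $01$, $\varphi_k^2(0)=0102$ ends in $02$, and in general $\varphi_k^n(0)$ ends in $(n-1)n$ for $1\le n\le k-1$, while $\varphi_k^k(0)$ ends in $1k$; these match the two displayed formulas since $n-j=n-1$ when $j=n$ for $n<k$, and $n-k+1=1$ when $n=k$, $j=0$.

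For the inductive step, suppose the claim holds for $n$ and consider $n+1$. Write $\varphi_k^n(0)=U\cdot ab$ where $ab$ is its suffix of length two, known by the inductive hypothesis. Since $\varphi_k^{n+1}(0)=\varphi_k(\varphi_k^n(0))=\varphi_k(U)\,\varphi_k(a)\,\varphi_k(b)$, the suffix of length two of $\varphi_k^{n+1}(0)$ is determined by $\varphi_k(a)\varphi_k(b)$ — more precisely by the last letter of $\varphi_k(a)$ (if $\varphi_k(b)$ is a single letter) together with $\varphi_k(b)$, or by $\varphi_k(b)$ alone if $|\varphi_k(b)|=2$. The key observation is that for every letter $m$, $\varphi_k(m)$ \emph{ends} in the letter $m+1$: indeed if $m=ki+j$ with $0\le j\le k-2$ then $\varphi_k(m)=(ki)(ki+j+1)=(ki)(m+1)$, and if $j=k-1$ then $\varphi_k(m)=(m+1)$. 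So the last letter of $\varphi_k^{n+1}(0)$ is always $b+1$, where $b$ is the last letter of $\varphi_k^n(0)$; the real work is finding the second-to-last letter. Here one splits on the residue of $b$ modulo $k$: if $b\not\equiv k-1$, then $\varphi_k(b)=(ki)(b+1)$ has length two, so the suffix of length two of $\varphi_k^{n+1}(0)$ is exactly $(b-(b\bmod k))(b+1)$; if $b\equiv k-1\pmod k$, then $\varphi_k(b)$ is the single letter $b+1$, and we must also look at the last letter of $\varphi_k(a)$, which by the same observation is $a+1$, giving suffix $(a+1)(b+1)$.

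To finish, one plugs in the two cases of the inductive hypothesis. If $n\equiv j\pmod k$ with $j\ne 0$, then by hypothesis $a=n-j$ and $b=n$, and $n+1\equiv j+1\pmod k$. When $j+1\le k-1$ we have $b=n\not\equiv k-1$, so the suffix is $(n-(n\bmod k))(n+1)=(n-j)(n+1)=((n+1)-(j+1))(n+1)$, matching the first bullet for $n+1$. When $j+1=k$ (i.e.\ $j=k-1$, so $n\equiv k-1$), we have $b=n\equiv k-1$, so the suffix is $(a+1)(b+1)=(n-j+1)(n+1)=(n-k+2)(n+1)$; and since $(n+1)\equiv 0\pmod k$ we need $((n+1)-k+1)(n+1)=(n-k+2)(n+1)$, which agrees. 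The remaining case is $n\equiv 0\pmod k$: by hypothesis $a=n-k+1$, $b=n$, and $n\equiv 0$ means $b\not\equiv k-1$ (as $k>2$), so the suffix is $(n-(n\bmod k))(n+1)=n(n+1)=((n+1)-1)(n+1)$, which is the first bullet for $n+1$ with residue $j=1$. All cases check out, completing the induction. The main obstacle is purely bookkeeping: handling the residue $j=k-1\to 0$ transition, where $\varphi_k(b)$ collapses to a single letter and one must reach back to $a$; once the observation "$\varphi_k(m)$ ends in $m+1$" is isolated, everything else is mechanical.
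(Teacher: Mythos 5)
Your proof is correct and takes essentially the same route as the paper: induction on $n$, splitting on whether $n+1\equiv 0\pmod k$, using that $\varphi_k(m)$ always ends in $m+1$, so the two-letter suffix of $\varphi_k^{n+1}(0)$ comes from $\varphi_k(b)$ alone when that image has length two and otherwise requires reaching back to the penultimate letter $a$ (the paper cites its Lemma~\ref{lastdig} to identify the last letter where you read it off the inductive hypothesis, a negligible difference). The only blemish is in your base-case paragraph: for $2\le n\le k-1$ the suffix of $\varphi_k^n(0)$ is $0n$, not $(n-1)n$ (when $j=n$ one has $n-j=0$, not $n-1$), but this is harmless since $0n$ is exactly what the first bullet predicts and your inductive step in fact needs only the single base case $n=1$.
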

We end this section with the following simple lemma.
\begin{lem}\label{00}
    Let $n \geq 0$ and $k>2$. The finite word $W_n^{(k)}$ contains no factor $00$.
\end{lem}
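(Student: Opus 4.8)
The plan is to argue by induction on $n$, exploiting the fact that $W_n^{(k)} = \varphi_k(W_{n-1}^{(k)})$ together with a close inspection of how $\varphi_k$ acts on individual letters and on adjacent pairs of letters. The base cases $n=0,1$ are immediate, since $W_0^{(k)} = 0$ and $W_1^{(k)} = 01$ contain no occurrence of $00$. For the inductive step, suppose $W_{n-1}^{(k)}$ contains no factor $00$; I want to deduce the same for $W_n^{(k)} = \varphi_k(W_{n-1}^{(k)})$.

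The key observation is a structural one about the images $\varphi_k(a)$: for every letter $a = ki+j$ with $0 \le j \le k-1$, the word $\varphi_k(a)$ is either $(ki)(ki+j+1)$ (when $j \le k-2$) or the single letter $(ki+j+1)$ (when $j = k-1$). In particular $\varphi_k(a)$ never contains two consecutive equal letters internally: in the first case this would force $ki = ki+j+1$, i.e. $j = -1$, impossible; in the second case the image has length one. Hence any occurrence of $00$ in $W_n^{(k)}$ must straddle the boundary between $\varphi_k(a)$ and $\varphi_k(b)$ for some factor $ab$ of $W_{n-1}^{(k)}$. The last letter of $\varphi_k(a)$ and the first letter of $\varphi_k(b)$ would then both have to be $0$. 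The first letter of $\varphi_k(b)$ is $0$ exactly when $b \in \{0,1,\dots,k-2\}$ (so that $b = k\cdot 0 + j$ with $j \le k-2$, giving $\varphi_k(b) = (0)(j+1)$); and when $b = k-1$, the first (and only) letter of $\varphi_k(b)$ is $k \ne 0$. So the first letter being $0$ forces $b \in \{0,\dots,k-2\}$. Meanwhile the last letter of $\varphi_k(a)$ is $0$ only if $\varphi_k(a) = (0)(a+1)$ with $a+1 = 0$ — impossible — or if $\varphi_k(a)$ is the single letter $0$, which would need $ki+j+1 = 0$, again impossible. Thus the last letter of $\varphi_k(a)$ is never $0$, and no boundary occurrence of $00$ can arise either.

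Putting these two cases together, $W_n^{(k)}$ contains no factor $00$, completing the induction. The only point requiring any care — and the one I would flag as the main (if modest) obstacle — is the bookkeeping on which letters $a$ can have $0$ as the \emph{last} letter of $\varphi_k(a)$; one must be slightly careful that the $j = k-1$ branch produces the single letter $ki+k = k(i+1)$, which is $0$ only if $i = -1$, impossible, so indeed $0$ never appears as a trailing letter of any image block. Once that is pinned down, both the internal and the boundary analyses close immediately, and no further computation is needed.
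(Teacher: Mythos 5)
Your proof is correct, and it takes a genuinely different route from the paper's. The paper inducts on $n$ using the block decomposition $W_{m+1}^{(k)}=W_m^{(k)}\cdots W_{m+2-k}^{(k)}\,(k\oplus W_{m+1-k}^{(k)})$ of Lemma \ref{struct}: the induction hypothesis kills occurrences of $00$ inside each block, and the fact that $W_i^{(k)}$ ends in the digit $i\neq 0$ (Lemma \ref{lastdig}) kills occurrences straddling block boundaries. You instead work directly with the one-step morphic decomposition $W_n^{(k)}=\varphi_k(W_{n-1}^{(k)})$ and observe two letter-level facts: no image $\varphi_k(a)$ contains two equal consecutive letters, and no image $\varphi_k(a)$ ends in $0$ (its last letter is always $ki+j+1\geq 1$). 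These two facts already imply that $\varphi_k(w)$ is $00$-free for \emph{every} word $w$, so your inductive hypothesis is never actually invoked --- the argument is really a direct one-line property of the morphism plus the trivial base case $W_0^{(k)}=0$. That makes your proof both shorter and slightly stronger than the paper's; what the paper's version buys is consistency with the bordering/straddling block framework it reuses throughout (and which it needs anyway for the palindrome counts), whereas your version isolates the relevant property of $\varphi_k$ itself. Both are complete and correct; you could streamline yours by dropping the unnecessary induction framing and the discussion of which letters $b$ give $\varphi_k(b)$ a leading $0$, since the last-letter analysis alone already excludes boundary occurrences.
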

\begin{proof}
	{
We prove the result by induction on $n$. By definition of $W_i^{(k)}$  for $0 \leq i \leq k-1$, it is obvious that there is no factor $00$ in $W_i^{(k)}$. Now, suppose that there is no factor $00$ in $W_i^{(k)}$ for $i \leq m$. We are going to prove that $00$ is not a factor of $W_{m+1}^{(k)}$. Since $W_{m+1}^{(k)} =W_m^{(k)}   \ldots   W_{m+2-k}^{(k)}   (k \oplus W_{m+1-k}^{(k)})$, by induction hypopiesis, $00$ is not a factor of $W_{m+1}^{(k)}$ that is not bordering or straddling factor. In addition, $W_i^{(k)}$ end by $i$, for $i \geq 1$. So $00$ is not a  bordering and straddling factor of $W_{m+1}^{(k)}$ and the claim is true.
}
\end{proof}

\section{Some properties of $W^{(k)}_{n}$}
In this section we give some identities for $W^{(k)}_{n}$ which are very useful for the rest of the paper.

The following lemma gives a recursive formula for $W^{(k)}_{n}$, when $1 \leq n \leq k-1$.

\begin{lem} \label{struct1}
	For $1 \leq n \leq k-1$,
	\begin{equation}\label{struct1eq}
		W^{(k)}_n=\prod_{i=n-1}^{0}W^{(k)}_i \, \, n.
	\end{equation}
\end{lem}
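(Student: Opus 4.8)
The claim is that for $1\le n\le k-1$, $W^{(k)}_n=\varphi_k^n(0)=\prod_{i=n-1}^{0}W^{(k)}_i\,n = W^{(k)}_{n-1}W^{(k)}_{n-2}\cdots W^{(k)}_0\,n$. Since $n\le k-1$, every letter appearing along the way is $<k$, so $\varphi_k$ acts on these letters by the first branch only (for $j=0,\dots,k-2$): $\varphi_k(j)=0\,(j+1)$ for $0\le j\le k-2$, exactly mirroring the finite-alphabet morphism $\psi_k$ restricted to small letters. So I would prove this by induction on $n$, using the definition $W^{(k)}_n=\varphi_k(W^{(k)}_{n-1})$.

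First, the base case $n=1$: $W^{(k)}_1=\varphi_k(0)=0\,1 = W^{(k)}_0\,1$, which is the formula. For the inductive step, assume $1\le n\le k-2$ and $W^{(k)}_n=W^{(k)}_{n-1}W^{(k)}_{n-2}\cdots W^{(k)}_0\,n$; I want $W^{(k)}_{n+1}=W^{(k)}_n W^{(k)}_{n-1}\cdots W^{(k)}_0\,(n+1)$. Apply $\varphi_k$ to the inductive hypothesis: $W^{(k)}_{n+1}=\varphi_k(W^{(k)}_n)=\varphi_k\!\left(W^{(k)}_{n-1}W^{(k)}_{n-2}\cdots W^{(k)}_0\,n\right)=\varphi_k(W^{(k)}_{n-1})\,\varphi_k(W^{(k)}_{n-2})\cdots\varphi_k(W^{(k)}_0)\,\varphi_k(n)$. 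Now $\varphi_k(W^{(k)}_i)=W^{(k)}_{i+1}$ by definition of the $k$-bonacci words, and since $n\le k-2$ we have $\varphi_k(n)=0\,(n+1)=W^{(k)}_0\,(n+1)$. Substituting gives $W^{(k)}_{n+1}=W^{(k)}_n W^{(k)}_{n-1}\cdots W^{(k)}_1\,W^{(k)}_0\,(n+1)$, which is exactly the desired formula for $n+1$ (and $n+1\le k-1$, so we stay in range). This closes the induction.

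There is really only one subtlety to watch, and it is the only place I expect any friction: making sure that throughout the range $1\le n\le k-1$ the relevant letters are all $\le k-2$ so that $\varphi_k$ always uses the branch $j\mapsto 0\,(j+1)$ and never the branch $j=k-1\mapsto (j+1)=k$. Concretely, in the inductive step we apply $\varphi_k$ to the single "new" letter $n$, and we need $n\le k-2$, i.e. we only push the induction up to $n+1=k-1$; the final value $W^{(k)}_{k-1}$ is obtained but we do not apply $\varphi_k$ to it within this lemma, so the letter $k-1$ never causes trouble here. (One could also check directly from Example~\ref{infkboexam}-style small cases that $W^{(k)}_1=01$, $W^{(k)}_2=0102$, etc., match the formula, as a sanity check.) Everything else — distributing the morphism over a concatenation, and using $\varphi_k(W^{(k)}_i)=W^{(k)}_{i+1}$ — is immediate from the definitions, so no genuine obstacle arises; the lemma is essentially a bookkeeping consequence of the shape of $\varphi_k$ on the initial segment $\{0,1,\dots,k-1\}$ of the alphabet.
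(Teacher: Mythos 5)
Your proof is correct and follows essentially the same route as the paper: induction on $n$, applying $\varphi_k$ to the inductive hypothesis, distributing over the concatenation, using $\varphi_k(W^{(k)}_i)=W^{(k)}_{i+1}$, and invoking $\varphi_k(n)=0\,(n+1)$ precisely because $n\le k-2$. The subtlety you flag about staying in the first branch of $\varphi_k$ is the same point the paper notes with its condition $j<k-1$.
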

 \begin{proof}
 	{We use induction on $n$. It is easily to check that the statement true for $j = 1$. Suppose the lemma is true for all $i$
 		with $1 \leq i \leq j<k-1$. Then
 		\begin{align*}
 		W^{(k)}_{j+1}= &\varphi_k(W^{(k)}_{j})\\
 		=&\varphi_k(\prod_{i=j-1}^{0}W^{(k)}_i \,\, j)\\
 		=&\prod_{i=j-1}^{0} \varphi_k(W^{(k)}_i)\, \varphi_k(j) \\
 		=&\prod_{i=j}^{1}W^{(k)}_i \,\, 0 \, (j+1)\,\,\,\,\,\,\text{ since }  j <  k-1 \text{ and }  \varphi_k(j)=0 (j+1)\\
 		=&\prod_{i=j}^{0}W^{(k)}_i \, \, (j+1).
 		\end{align*}}
 \end{proof}

\begin{lem} \label{struct2}
	For $1 \leq n \leq k-1$,
	$$ W^{(k)}_n=W^{(k)}_{n-1} W^{(k)}_{n-1}(n-1)^{-1} n. $$
\end{lem}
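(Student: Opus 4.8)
The plan is to obtain this identity as an immediate consequence of Lemma~\ref{struct1}, with no induction needed. First I would invoke \eqref{struct1eq} and peel off the leading factor of the product:
\[
W^{(k)}_n=\Big(\prod_{i=n-1}^{0}W^{(k)}_i\Big)\,n
        = W^{(k)}_{n-1}\,\Big(\prod_{i=n-2}^{0}W^{(k)}_i\Big)\,n .
\]
(For $n=1$ the remaining product $\prod_{i=-1}^{0}$ is empty, so the right-hand side degenerates to $W^{(k)}_{0}\,1=01=W^{(k)}_1$, which is consistent with the claimed formula since $W^{(k)}_{0}(0)^{-1}=\epsilon$.)

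Next I would apply \eqref{struct1eq} a second time, now with $n$ replaced by $n-1$ — this is legitimate because $1\le n-1\le k-2$, and it is trivially true when $n-1=0$ — to get $W^{(k)}_{n-1}=\big(\prod_{i=n-2}^{0}W^{(k)}_i\big)(n-1)$. In particular $W^{(k)}_{n-1}$ ends with the letter $n-1$, so the right quotient $W^{(k)}_{n-1}(n-1)^{-1}$ is well defined and equals exactly $\prod_{i=n-2}^{0}W^{(k)}_i$.

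Substituting this into the first display yields
\[
W^{(k)}_n=W^{(k)}_{n-1}\,\big(W^{(k)}_{n-1}(n-1)^{-1}\big)\,n ,
\]
which is the assertion. I do not expect any real obstacle here; the only two points that deserve a moment's care are the degenerate case $n=1$ (empty product) and the verification that the word to which $(n-1)^{-1}$ is applied genuinely terminates in the letter $n-1$, both of which follow at once from Lemma~\ref{struct1}.
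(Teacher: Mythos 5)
Your proposal is correct and follows essentially the same route as the paper: the paper's proof also just expands $W^{(k)}_n$ via Lemma~\ref{struct1} and identifies the tail $W^{(k)}_{n-2}\cdots W^{(k)}_{0}$ with $W^{(k)}_{n-1}(n-1)^{-1}$. Your write-up is merely more explicit about the degenerate case $n=1$ and about why the right quotient by $(n-1)$ is well defined.
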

 \begin{proof}
	{By Lemma \ref{struct1}, we have
		\begin{align*}
			W^{(k)}_n&=W^{(k)}_{n-1} W^{(k)}_{n-2}  \ldots   W^{(k)}_{0}  n\\
			&= W^{(k)}_{n-1}  W^{(k)}_{n-1}(n-1)^{-1} n		
		\end{align*}}
\end{proof}
	
In the next lemma we give a recursive formula for $ W^{(k)}_n$ when $n \geq k$.
\begin{lem} \label{struct}
	For $n \geq k$,
	\begin{equation}\label{structeq}
 W^{(k)}_n=\prod_{i=n-1}^{n-k+1}W^{(k)}_i \, (k\oplus W^{(k)}_{n-k}).
	\end{equation}
\end{lem}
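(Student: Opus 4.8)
The plan is to prove \eqref{structeq} by induction on $n$, pushing the morphism $\varphi_k$ through the analogous identity one step lower. The two facts I would rely on throughout are: first, $W^{(k)}_i = \varphi_k^i(0)$, so that $\varphi_k(W^{(k)}_i) = W^{(k)}_{i+1}$ for every $i \ge 0$; and second, Lemma \ref{phideflem}, which I would first upgrade from single letters to words by noting that for any $U \in \mathbb{N}^*$ one has $\varphi_k(k \oplus U) = k \oplus \varphi_k(U)$ --- this is immediate since $\varphi_k$ and the shift $U \mapsto k \oplus U$ both respect concatenation and hence are determined by their action on letters, where the equality is exactly Lemma \ref{phideflem}.

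For the base case $n = k$, I would start from Lemma \ref{struct1} applied with $n = k-1$, giving $W^{(k)}_{k-1} = \prod_{i=k-2}^{0} W^{(k)}_i \, (k-1)$, and apply $\varphi_k$ to both sides. Since $\varphi_k$ is a morphism and $\varphi_k(W^{(k)}_i) = W^{(k)}_{i+1}$, the right-hand side becomes $\prod_{i=k-1}^{1} W^{(k)}_i \, \varphi_k(k-1)$; and from the definition of $\varphi_k$ one has $\varphi_k(k-1) = k = k \oplus W^{(k)}_0$. This yields $W^{(k)}_k = \prod_{i=k-1}^{1} W^{(k)}_i \, (k \oplus W^{(k)}_0)$, which is \eqref{structeq} for $n = k$.

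For the inductive step, assuming \eqref{structeq} for some $m \ge k$, I would apply $\varphi_k$ to $W^{(k)}_m = \prod_{i=m-1}^{m-k+1} W^{(k)}_i \, (k \oplus W^{(k)}_{m-k})$. The morphism property turns $\prod_{i=m-1}^{m-k+1} W^{(k)}_i$ into $\prod_{i=m}^{m-k+2} W^{(k)}_i$, while the upgraded Lemma \ref{phideflem} turns $\varphi_k(k \oplus W^{(k)}_{m-k})$ into $k \oplus W^{(k)}_{m-k+1}$. Collecting terms gives $W^{(k)}_{m+1} = \prod_{i=m}^{m-k+2} W^{(k)}_i \, (k \oplus W^{(k)}_{m-k+1})$, which is precisely \eqref{structeq} for $n = m+1$, closing the induction. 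I do not anticipate a genuine obstacle here; the only step deserving an explicit sentence is the promotion of Lemma \ref{phideflem} to arbitrary words, and one could equally replace that remark by a direct appeal to Theorem \ref{growth} applied letterwise to $W^{(k)}_{m-k}$.
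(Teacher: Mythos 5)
Your proposal is correct and follows essentially the same route as the paper: induction on $n$, with the base case $n=k$ obtained by applying $\varphi_k$ to the Lemma \ref{struct1} decomposition of $W^{(k)}_{k-1}$ (using $\varphi_k(k-1)=k=k\oplus W^{(k)}_0$), and the inductive step obtained by pushing $\varphi_k$ through the identity one level down via Lemma \ref{phideflem}. Your explicit remark that Lemma \ref{phideflem} must be promoted from letters to words (since $\varphi_k$ and $U\mapsto k\oplus U$ are both morphisms) is a point the paper glosses over, but it is a clarification rather than a different argument.
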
	
\begin{proof}{
We use induction on $n$. If $n=k$, the statement is true since
\begin{align*}
W^{(k)}_{k}= &\varphi_k(W^{(k)}_{k-1})\\
=&\varphi_k(\prod_{i=k-2}^{0}W^{(k)}_i (k-1))\\
=&\prod_{i=k-2}^{0} \varphi_k(W^{(k)}_i) \varphi_k(k-1) \\
=&\prod_{i=k-1}^{1} W^{(k)}_i k\\
=&\prod_{i=k-1}^{1} W^{(k)}_i (k\oplus W^{(k)}_0).
\end{align*}
 Now, suppose (\ref{structeq}) holds for all $n$
with $k\leq n \leq j$; We prove it for $n=j+1$ as below.
\begin{align*}
W^{(k)}_{j+1}= &\varphi_k(W^{(k)}_{j})\\
=&\varphi_k(\prod_{i=j-1}^{j-k+1}W^{(k)}_i (k\oplus W^{(k)}_{j-k}))\\
=&\prod_{i=j-1}^{j-k+1} \varphi_k(W^{(k)}_i) \varphi_k(k\oplus W^{(k)}_{j-k})\\
=&\prod_{i=j}^{j-k+2}W^{(k)}_i (k\oplus \varphi_k( W^{(k)}_{j-k}))\,\,\,\, \text{by Lemma \ref{phideflem}}\\
=&\prod_{i=j}^{j-k+2}W^{(k)}_i (k\oplus  W^{(k)}_{j-k+1}).
\end{align*}}
\end{proof}

\begin{cor}\label{correcsize}
Let $k>2$, then the following equality holds:
\begin{align*}
|W^{(k)}_{i+1}|= \left\{
\begin{array}{rl}
2|W^{(k)}_{i}|\;\; & \text{if } 0\leq i\leq k-2, \\
2|W^{(k)}_{i}|-1 & \text{if } i=k-1,\\
2|W^{(k)}_{i}|-|W^{(k)}_{i-k}| & \text{if } i>k-1.
\end{array} \right.
\end{align*}
\end{cor}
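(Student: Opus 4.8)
The plan is to obtain each of the three cases directly from the structural identities for $W^{(k)}_n$ already proved in Lemmas~\ref{struct1}, \ref{struct2} and \ref{struct}, by simply counting letters; no new induction is needed. Writing $n=i+1$, the three cases correspond to the ranges $1\le n\le k-1$, $n=k$, and $n\ge k+1$, and I would treat them in that order. Throughout I use that $k\oplus W$ has the same length as $W$, and that deleting a prescribed last letter of a word decreases its length by exactly $1$.

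For $1\le n\le k-1$ I would take lengths in Lemma~\ref{struct2}: from $W^{(k)}_n=W^{(k)}_{n-1}\bigl(W^{(k)}_{n-1}(n-1)^{-1}\bigr)n$ we get $|W^{(k)}_n|=|W^{(k)}_{n-1}|+(|W^{(k)}_{n-1}|-1)+1=2|W^{(k)}_{n-1}|$, which is the first case with $i=n-1\in\{0,\dots,k-2\}$; this works uniformly down to $n=1$. (The same conclusion follows by taking lengths in Lemma~\ref{struct1} and subtracting the formulas for consecutive values of $n$.)

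For $n\ge k$, taking lengths in Lemma~\ref{struct} gives $|W^{(k)}_n|=\sum_{t=n-k+1}^{n-1}|W^{(k)}_t|+|W^{(k)}_{n-k}|=\sum_{t=n-k}^{n-1}|W^{(k)}_t|$. If moreover $n\ge k+1$, then $n-1\ge k$, so the same identity applies with $n$ replaced by $n-1$, and subtracting the two sums telescopes to $|W^{(k)}_n|-|W^{(k)}_{n-1}|=|W^{(k)}_{n-1}|-|W^{(k)}_{n-k-1}|$, i.e. $|W^{(k)}_n|=2|W^{(k)}_{n-1}|-|W^{(k)}_{n-1-k}|$; this is the third case with $i=n-1>k-1$, and here $i-k=n-1-k\ge 0$, so $W^{(k)}_{i-k}$ is indeed defined.

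The remaining value $n=k$ is the only genuine boundary: Lemma~\ref{struct} now gives $|W^{(k)}_k|=\sum_{t=0}^{k-1}|W^{(k)}_t|$, while since $n-1=k-1$ lies in the range of Lemma~\ref{struct1} we have $\sum_{t=0}^{k-2}|W^{(k)}_t|=|W^{(k)}_{k-1}|-1$; substituting yields $|W^{(k)}_k|=(|W^{(k)}_{k-1}|-1)+|W^{(k)}_{k-1}|=2|W^{(k)}_{k-1}|-1$, the second case with $i=k-1$. I expect this step, where the two relevant length identities come from different lemmas rather than from a single telescoping sum, to be the only place requiring attention; everything else is immediate. As a consistency check, all three formulas also follow from $|W^{(k)}_n|=f^{(k)}_{n+k}$ in~(\ref{size}) together with the recurrence~(\ref{defkbonum}), once one reads off $f^{(k)}_{n-1}=0$ for $n\le k-1$ and $f^{(k)}_{k-1}=1$.
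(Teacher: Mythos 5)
Your proposal is correct and follows exactly the two routes the paper itself invokes (the paper's proof is a one-line appeal to equation~(\ref{size}) or to Lemmas~\ref{struct1} and~\ref{struct}); you simply carry out the length count, including the telescoping for $n\ge k+1$ and the boundary case $n=k$, which the paper leaves to the reader. No discrepancies.
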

\begin{proof}
{The result is true either by (\ref{size}), or by Lemmas \ref{struct1} and \ref{struct}.
}
\end{proof}
The following corollary is a direct consequence of Corollary \ref{correcsize}.
\begin{cor}\label{coreqsize}
The following inequality holds for every nonnegative integer $i$.
$$|W^{(k)}_{i+1}|\leq 2 |W^{(k)}_{i}|$$
\end{cor}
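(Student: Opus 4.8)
The plan is to read the inequality straight off the three-way length recursion of Corollary~\ref{correcsize}, since that corollary already handles all the arithmetic. First I would note that the three ranges appearing there, namely $0\le i\le k-2$, $i=k-1$, and $i>k-1$, partition the set of nonnegative integers, so it is enough to verify the bound $|W^{(k)}_{i+1}|\le 2|W^{(k)}_{i}|$ separately in each of these cases.

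In the first range Corollary~\ref{correcsize} gives $|W^{(k)}_{i+1}|=2|W^{(k)}_{i}|$, so the claimed inequality holds (with equality). For $i=k-1$ it gives $|W^{(k)}_{i+1}|=2|W^{(k)}_{i}|-1$, which is trivially at most $2|W^{(k)}_{i}|$. For $i>k-1$ it gives $|W^{(k)}_{i+1}|=2|W^{(k)}_{i}|-|W^{(k)}_{i-k}|$, and since the length of any word is nonnegative we have $|W^{(k)}_{i-k}|\ge 0$, hence $|W^{(k)}_{i+1}|\le 2|W^{(k)}_{i}|$. Combining the three cases yields the statement for every nonnegative integer $i$.

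There is no real obstacle here; the only points worth a moment's care are that the case split of Corollary~\ref{correcsize} is exhaustive on $i\ge 0$, and that in the last case one must explicitly invoke the (trivial) nonnegativity of word lengths rather than dropping the subtracted term silently. As an alternative one could argue directly from the structural identities in Lemmas~\ref{struct1}, \ref{struct2} and \ref{struct}, but passing through Corollary~\ref{correcsize} is the shortest route and is the one I would write up.
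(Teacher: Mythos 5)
Your argument is correct and is exactly the route the paper intends: it states the corollary as a direct consequence of Corollary~\ref{correcsize}, and your three-case check (equality, minus $1$, minus a nonnegative length) simply makes that deduction explicit. Nothing is missing.
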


\begin{lem}\label{lastdig}
	For any $n\geq 1$, the digit $n$ is the largest one of $W^{(k)}_n$ and appears once at the end of this word.
\end{lem}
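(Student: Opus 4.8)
The plan is to prove the two assertions simultaneously --- that $n$ is the strictly largest digit of $W^{(k)}_n$ and that it occurs exactly once, as the final letter --- by strong induction on $n$, using the structural decompositions of Lemma \ref{struct1} (for $1\le n\le k-1$) and Lemma \ref{struct} (for $n\ge k$). The only extra ingredient is the elementary observation that, for any word $V\in\mathbb{N}^{*}$ and any integer $m\ge 0$, a digit $a$ is the largest digit of $V$ and occurs in $V$ exactly once and at the end if and only if $m+a$ has the same three properties relative to $m\oplus V$; this is immediate since $d\mapsto m+d$ is an order-preserving bijection applied letterwise. I would deliberately include the degenerate index $n=0$ in the induction, where $W^{(k)}_0=0$ trivially has $0$ as its (unique, largest, terminal) digit, so that the boundary case $n=k$ below is covered without a separate argument.

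For the base range $1\le n\le k-1$, Lemma \ref{struct1} gives $W^{(k)}_n=\bigl(\prod_{i=n-1}^{0}W^{(k)}_i\bigr)\,n$. By the induction hypothesis each $W^{(k)}_i$ with $0\le i\le n-1$ uses only digits $\le i\le n-1$, so $n$ does not occur in the product $\prod_{i=n-1}^{0}W^{(k)}_i$ at all; hence the trailing letter $n$ is the unique occurrence of $n$ in $W^{(k)}_n$ and is strictly larger than every other digit. (The case $n=1$, where $W^{(k)}_1=01$, is the literal seed of this argument.)

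For $n\ge k$, Lemma \ref{struct} gives $W^{(k)}_n=\bigl(\prod_{i=n-1}^{n-k+1}W^{(k)}_i\bigr)\,(k\oplus W^{(k)}_{n-k})$. Each factor $W^{(k)}_i$ with $n-k+1\le i\le n-1$ uses only digits $\le n-1$ by the induction hypothesis, so the first block contributes no letter $\ge n$. Applying the induction hypothesis to $W^{(k)}_{n-k}$ (legitimate since $n-k\ge 0$, and giving the correct statement even when $n-k=0$ by the convention above), its largest digit is $n-k$, occurring exactly once and at the end; by the $\oplus$-observation the largest digit of $k\oplus W^{(k)}_{n-k}$ is then $n$, it occurs exactly once and at the end of that block, and all its other digits are $\le n-1$. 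Concatenating, $n$ occurs in $W^{(k)}_n$ exactly once, namely as the last letter, and exceeds every other digit, which closes the induction. I do not expect a real obstacle here: the proof is a routine strong induction, and the only points needing care are the boundary case $n=k$ (where the final block $k\oplus W^{(k)}_{n-k}$ degenerates to the single letter $k=n$) and the verification that $n$ cannot appear inside any shorter prefactor --- both handled precisely by carrying the ``largest digit'' claim through the induction rather than just the ``terminal occurrence'' claim.
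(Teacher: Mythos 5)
Your proof is correct, and the induction closes: in the range $1\le n\le k-1$ the trailing letter $n$ cannot occur in $\prod_{i=n-1}^{0}W^{(k)}_i$ because each factor has largest digit $i\le n-1$, and for $n\ge k$ the block $k\oplus W^{(k)}_{n-k}$ inherits the ``unique, largest, terminal'' property of $n=k+(n-k)$ from $W^{(k)}_{n-k}$ via the order-preserving shift, while the preceding factors contribute only digits $\le n-1$. There is no circularity, since Lemmas \ref{struct1} and \ref{struct} are established before this statement and without it. Your route is, however, genuinely different from the paper's: the paper runs a plain (single-step) induction directly on the morphism, writing $W^{(k)}_{m+1}=\varphi_k(W^{(k)}_{m}m^{-1})\,\varphi_k(m)$ and observing that $\varphi_k$ sends every digit $d<m$ to digits $\le d+1\le m$ while $\varphi_k(m)$ ends in $m+1$, which occurs there exactly once. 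The paper's argument needs only the definition of $\varphi_k$ and the immediately preceding case, so it is self-contained and shorter; yours needs strong induction over the last $k$ indices and leans on the concatenation formulas, but in exchange it never examines the morphism's letterwise action and makes explicit the $\oplus$-compatibility that the paper uses implicitly elsewhere. Either version is acceptable; if you keep yours, do state the $n=0$ convention and the order-preservation of $m\oplus(\cdot)$ as you did, since the case $n=k$ (where the final block is the single letter $k$) silently depends on both.
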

\begin{proof}
	{We prove this using induction on $n$.
	When $n=1$, $W^{(k)}_1=\varphi_k(0)=01$ and it is obvious that the claim is true. Now suppose the result holds for $n \leq m$. We must it for $n=m+1$.
	\begin{align*}
	W^{(k)}_{m+1}= &\varphi_k(W^{(k)}_{m})\\
	=&\varphi_k(W^{(k)}_{m}m^{-1} m)\,\,\,\text{by the induction hypothesis}\\
	=&\varphi_k(W^{(k)}_{m}m^{-1}) \varphi_k(m).
	\end{align*}
Now, by induction hypothesis all digits of $W^{(k)}_{m}m^{-1}$ are less than $m$. Hence, by definition of $\varphi_k$, all digits of $\varphi_k(W^{(k)}_{m}m^{-1})$  are less than $m+1$. Again using definition of $\varphi_k$, the largest digit of $\varphi_k(m)$ is $m+1$ which occurs once at
the end of $\varphi_k(m)$ and the proof is complete.
}\end{proof}

\begin{lem}\label{ab<wn}
Let $a, b \in \mathbb{N}$ and let $n,k$ be two positive integers with $k>2$. If $ab \prec W^{(k)}_n$, then either $k\mid b$ or $a<b$.
\end{lem}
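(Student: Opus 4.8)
The plan is to prove the claim by induction on $n$, following the same pattern used in Lemmas~\ref{struct1} and \ref{struct}. The base cases are $0 \le n \le k-1$: here $W_n^{(k)}$ is given explicitly by Lemma~\ref{struct1}, and one checks directly that every length-two factor $ab$ of these short words has $a < b$ (indeed the letters occurring are small and each prefix $W_i^{(k)}$ ends in $i$, while the next letter begins the next block or is the final letter $n$, which is larger). More carefully, I would argue that for $n \le k-1$ the word $W_n^{(k)}$ only ever has ascents except possibly at block boundaries, and examine those boundaries. Actually the cleanest base case is to note that $W_1^{(k)} = 01$ has the single factor $01$ with $0 < 1$, and then use the recursion $W_n^{(k)} = W_{n-1}^{(k)}\, W_{n-1}^{(k)}(n-1)^{-1}\, n$ from Lemma~\ref{struct2}: a new length-two factor arises only straddling the concatenation point, namely $(n-1)\,W_{n-1}^{(k)}[1]$, and at the very end $(n-1)\,n$ — but $W_{n-1}^{(k)}[1] = 0$, so the straddling factor is $(n-1)0$, which satisfies $k \mid 0$, and $(n-1)n$ satisfies $n-1 < n$.

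For the inductive step with $n \ge k$, I would invoke Lemma~\ref{struct}:
\begin{equation*}
W^{(k)}_n = \Big(\prod_{i=n-1}^{n-k+1}W^{(k)}_i\Big)\, (k\oplus W^{(k)}_{n-k}).
\end{equation*}
A length-two factor $ab \prec W_n^{(k)}$ is either (i) entirely inside one of the blocks $W_i^{(k)}$ for $n-k+1 \le i \le n-1$, in which case the induction hypothesis applies directly; (ii) entirely inside the last block $k \oplus W_{n-k}^{(k)}$, in which case write $ab = (k\oplus a')(k\oplus b')$ with $a'b' \prec W_{n-k}^{(k)}$; by induction either $a' < b'$ (so $a < b$) or $k \mid b'$ (so $k \mid b$, since $b = b' + k$); or (iii) a factor straddling two consecutive blocks. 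For the straddling factors, the left letter is the last digit of some $W_i^{(k)}$, which by Lemma~\ref{lastdig} equals $i$ (or, for the last boundary, the last digit of $\prod W_i^{(k)}$ is $n-k+1$), and the right letter is the first digit of the following block. The key observation is that the first digit of every $W_j^{(k)}$ with $j \ge 1$ is $0$, and the first digit of $k \oplus W_{n-k}^{(k)}$ is $k$ (since $W_{n-k}^{(k)}$ starts with $0$, as $n-k \ge 0$; when $n-k = 0$ the block is just the single letter $k$). In the first sub-case the straddling factor is $i\,0$ with $k \mid 0$; in the second it is $(n-k+1)\,k$ with $k \mid k$. Either way the conclusion holds.

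The main obstacle — really the only place needing care — is bookkeeping at the boundaries: making sure that when a block $W_j^{(k)}$ has length $1$ (which happens only for $j=0$, the single letter $0$, inside a $k\oplus$ shift giving the single letter $k$) the "straddling" analysis still makes sense, and that I correctly identify the first letter of each block. I expect to handle this by first establishing, as a small auxiliary observation (or by citing Lemma~\ref{struct1} and Lemma~\ref{struct}), that for all $j \ge 1$ the word $W_j^{(k)}$ begins with the digit $0$, together with Lemma~\ref{lastdig} for the last digits. Once these two facts are in hand, the case analysis above closes the induction cleanly, since every straddling length-two factor is of the form (last digit of a block)$\,\cdot\,$(first digit $= 0$ or $k$ of the next block), and $0$ and $k$ are both divisible by $k$.
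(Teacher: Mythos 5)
Your proof is correct and follows essentially the same route as the paper's: induction on $n$ using the block decompositions of Lemmas \ref{struct1}/\ref{struct2}/\ref{struct}, with factors internal to a block handled by the induction hypothesis (including the $k\oplus$ shift for the last block) and straddling factors dispatched because every block after the first begins with $0$ or $k$, both divisible by $k$. One small slip that does not affect the argument: in the base case the letter preceding the final $n$ is the penultimate letter of $W^{(k)}_{n-1}$, which is $0$ (cf.\ Theorem \ref{suffix}), not $n-1$; since every letter of $W^{(k)}_{n-1}$ is less than $n$ by Lemma \ref{lastdig}, the required inequality $a<b$ holds there regardless.
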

\begin{proof}
{We prove the lemma using induction on $n$, for a fixed integer $k>2$. If $n=1$, then $ W^{(k)}_1=01$ and it is clear that the lemma is true.
Let the claim be true for all $n$ with $n\leq m$; We want to prove it for $n=m+1$. Assume that $ab \prec W^{(k)}_{m+1}$ and $k\nmid b$. We show that $a<b$.

If $n \leq k-1$, then by (\ref{struct1eq}) we have
\begin{equation*}
 W^{(k)}_n=\prod_{i=n-1}^{0}W^{(k)}_i \, \, n.
 \end{equation*}
Since $W^{(k)}_i$ always starts with $0$ and $k\nmid b$, using above equation we conclude that either $ab\prec W^{(k)}_i$, for some $1\leq i\leq n-1$ or
$a=0$ and $b=n$. If $ab\prec W^{(k)}_i$, for some $1\leq i\leq n-1$, then by induction hypothesis $a<b$ and if $ab=0n$ it is clear that $a<b$, as required.

If $n \geq k$, then by (\ref{structeq}) we have
\begin{equation*}
 W^{(k)}_n=\prod_{i=n-1}^{n-k+1}W^{(k)}_i \, \, (k\oplus W^{(k)}_{n-k}).
	\end{equation*}
Since $W^{(k)}_i$ always start with $0$ and $k\nmid b$, using above equation we conclude that either $ab\prec W^{(k)}_i$, for some $n-k+1\leq i\leq n-1$ or
$ab\prec (k\oplus W^{(k)}_{n-k})$. In both cases by induction hypothesis we have $a<b$, as desired.
}
\end{proof}
	
\noindent {\bf{Proof of Theorem \ref{suffix}:}}
We prove the theorem using induction on $n$. Suppose the result holds for $n \leq m$. We consider two following cases for $n=m+1$:
	\begin{itemize}
		\item 	If $m+1 \equiv 0\;(mod \;k)$, then $m \equiv k-1\;(mod\; k)$ and by induction hypothesis $(m-k+1) (m) \rhd \varphi_k^m(0)$. Hence,
		\begin{align*}
			\varphi_k((m-k+1) (m)) \rhd & \varphi_k^{m+1}(0),\\
		\varphi_k(m-k+1) \varphi_k(m) \rhd & \varphi_k^{m+1}(0),\\
		(m-k+1)(m-k+2)(m+1)\rhd& \varphi_k^{m+1}(0).
		\end{align*}
	\item	If $m+1 \equiv j\;(mod \;k)$, and $0 <j<k$ then by Lemma \ref{lastdig}, $m$ is the last digit of $\varphi_k^m(0)$ or $m \rhd  \varphi_k^m(0)$. Hence,
		\begin{align*}
		\varphi_k(m) \rhd & \varphi_k^{m+1}(0)\\
		(m-j+1)(m+1)\rhd& \varphi_k^{m+1}(0).
		\end{align*}
\end{itemize}
So, the proof is complete.\hfill{$\Box$}

\section{ The number of palindromes in $W^{(k)}_n$}
In this section we are going to count the total number of palindromes in $W^{(k)}_n$.
 Actually, the process of counting all palindromes in $W^{(k)}_n$ leads us to find all possible palindrome factors of  $W^{(k)}_n$. Equations (\ref{struct1eq}) and (\ref{structeq}) are essential in the rest of this work. By equation (\ref{struct1eq}) and Lemma \ref{struct2}, we obtain an explicit formula for the number of palindromes in $W^{(k)}_n$ when $n \leq k-1$; This is done in Section \ref{2-n-k-1}. When $n\geq k$, equation (\ref{structeq}) leads to distinguish the three following types of palindromes.

\begin{description}
	\item[]  {\bf Type 1.} Palindromes which are included in one of the words  $W^{(k)}_i$ , $n-k+1\leq i\leq n-1$ or in $(k\oplus W^{(k)}_{n-k})$,
	\item[] {\bf Type 2.} Palindromes $P$ which are of the forme $P=X_jY_j$ for some  $n-k+2\leq j\leq n-1$ in which
	$X_j\neq \epsilon$ is a suffix of $W^{(k)}_j$ and $Y_j\neq \epsilon$ is a prefix of $\prod_{i=j-1}^{n-k+1}W^{(k)}_i$ for some $n-k+2\leq j\leq n-1$. We call any such palindrome, a {\it bordering palindromes of type $j$} of $W^{(k)}_n$.
	\item[] {\bf Type 3.} Palindromes $P$ which are of the forme $P=AB$, where
	$A$ is a suffix of $\prod_{i=n-1}^{n-k+1}W^{(k)}_i$ and $B$ is a prefix of
	$k\oplus W^{(k)}_{n-k}$, these are called {\it $(A,B)$-straddling palindromes} (or straddling palindromes for short, if there is no danger of confusion) of  $W^{(k)}_n$.
	\end{description}

	Let $P^{(k)}(n)$ denote the total number of palindromes in $W^{(k)}_n$ occurred in different positions and
 $B^{(k)}(n,j)$ and $S^{(k)}(n)$ denote the number of bordering palindromes of type $j$ and straddling palindromes of $W^{(k)}_n$, respectively.
It is clear that the following recurrence relation holds
\begin{equation}\label{recpbs}
 P^{(k)}(n)= \Sigma_{i=n-k}^{n-1} P^{(k)}(i)+ \Sigma_{i=n-k+2}^{n-1} B^{(k)}(n,i)+S^{(k)}(n).
\end{equation}


The following theorem gives a recurrence formula for computing $P^{(k)}(n)$. The proof which is formally stated in Section \ref{secproofthmpal}, is based on considering several cases with respect to values of $n$ and $k$. This is done in Sections \ref{2-n-k-1}-\ref{secstrad}.

\begin{thm}\label{recppthm}
Let $k>2$ and $n\geq 1$ be given integers. Then the following holds
\begin{description}
\item[(i)] If $1 \leq n \leq k-1$, then $P^{(k)}(n)=2^{n-1}(n-2)+1$,\\
\item[(ii)] If $n \geq k$, then $P^{(k)}(n)= \Sigma_{i=n-k}^{n-1} P^{(k)}(i)+\alpha^{(k)}(n)$, where

\begin{equation*}\label{recpp}
 \alpha^{(k)}(n)= \left\{
\begin{array}{ll}
2^k+(k-3) 2^{n-k+2}-n2^{n-k+1}& \text{if} \,\,\,\, k \leq n \leq 2k-3,\\
0 & \text{if} \,\,\,\, n=2k-2,\\
2^{n-2k+2}-1& \text{if} \,\,\,\, 2k-1 \leq n \leq 3k-3,\\
2^{k}-2& \text{if} \,\,\,\, n=3k-2,\\
0 & \text{if} \,\,\,\, n>3k-2.
\end{array} \right.
\end{equation*}
\end{description}
\end{thm}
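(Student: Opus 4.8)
The plan is to split along the two cases already delineated in the statement and exploit the structural recursions of Section 4 together with the palindrome bookkeeping in \eqref{recpbs}. For part (i), the range $1 \le n \le k-1$, I would argue directly from Lemma \ref{struct2}, which gives $W^{(k)}_n = W^{(k)}_{n-1} W^{(k)}_{n-1}(n-1)^{-1} n$. Since $W^{(k)}_{n-1}$ ends in the unique maximal digit $n-1$ (Lemma \ref{lastdig}) and $W^{(k)}_{n-1}(n-1)^{-1}$ therefore contains only digits $< n-1$, the word $W^{(k)}_n$ is the concatenation of two copies of $W^{(k)}_{n-1}$ with the separating digit $n-1$ removed and a fresh maximal digit $n$ appended. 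This "doubling" structure should let me set up a recursion $P^{(k)}(n) = 2 P^{(k)}(n-1) + (\text{new palindromes created across the join})$; the new palindromes are exactly the bordering/straddling ones, and because all digits near the seam are small and distinct from the large ones, they are short (length $2$ or $3$) and can be enumerated combinatorially. Solving the resulting linear recurrence with the base case $P^{(k)}(1)=1$ (the word $01$, whose only palindromes are the two single letters, but the count here is of palindromes occurring in distinct positions — I must be careful about the exact convention, namely that single-letter palindromes at distinct positions are counted separately, so $P^{(k)}(1)$ counts the positions $0$ and $1$, giving... here one must match $2^{0}(1-2)+1 = 0$, so in fact the convention must be that single-letter palindromes are \emph{not} counted and only palindromes of length $\ge 2$ are, which I will confirm from the earlier sections) yields the closed form $2^{n-1}(n-2)+1$ by a routine induction.

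For part (ii), the range $n \ge k$, the engine is \eqref{structeq}: $W^{(k)}_n = \prod_{i=n-1}^{n-k+1} W^{(k)}_i \,(k\oplus W^{(k)}_{n-k})$, together with the decomposition of palindromes into Type 1 (internal), Type 2 (bordering of type $j$), and Type 3 (straddling). Equation \eqref{recpbs} already isolates the non-internal contribution as $\alpha^{(k)}(n) := \sum_{i=n-k+2}^{n-1} B^{(k)}(n,i) + S^{(k)}(n)$, so the task reduces to computing $B^{(k)}(n,j)$ and $S^{(k)}(n)$ and verifying the five-case formula. The key tool is Theorem \ref{suffix}, which pins down the length-two suffix of each $W^{(k)}_i$, hence the letters immediately to the left of each seam; combined with Lemma \ref{ab<wn} (a factor $ab$ forces $k\mid b$ or $a<b$) and Lemma \ref{00} (no factor $00$), this tightly constrains which short words straddling a seam can be palindromes. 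One expects that bordering/straddling palindromes are again all of bounded length — at most $3\cdot 2^{k-1}-1$ as announced in the introduction — and their count depends only on how $n$ sits relative to the "regime boundaries" $k-1$, $2k-2$, $3k-2$, which is precisely why $\alpha^{(k)}(n)$ is piecewise with breakpoints there. I would handle the generic regime $k \le n \le 2k-3$ first (where the suffixes $W^{(k)}_i$ for $n-k+1 \le i \le n-1$ are still the "small" words $W^{(k)}_i$ with $i \le k-2$ governed by Lemma \ref{struct1}), then the transitional values $n = 2k-2$ and $n = 3k-2$ separately, and finally show the contribution vanishes for $n > 3k-2$ because by then every seam is flanked by a sufficiently rich prefix/suffix that no new palindrome can close up.

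The main obstacle will be the precise enumeration of the straddling palindromes $S^{(k)}(n)$ and the bordering palindromes $B^{(k)}(n,j)$ in the first non-trivial regime: unlike Type 1 palindromes, these are genuinely global objects living across the boundary $\prod_{i=n-1}^{n-k+1} W^{(k)}_i \mid k\oplus W^{(k)}_{n-k}$, and controlling them requires knowing not just the suffix of length two but a longer suffix of $\prod_{i=n-1}^{n-k+1} W^{(k)}_i$ and a longer prefix of $k\oplus W^{(k)}_{n-k}$, matching them letter by letter under the reversal condition, and ruling out long matches using the growth of digit values. I anticipate needing an auxiliary lemma describing a prefix of $W^{(k)}$ of length roughly $3\cdot 2^{k-1}$ explicitly (the words $W^{(k)}_i$ for small $i$ stabilize on a common prefix), from which both the relevant suffixes and prefixes can be read off, and then the counts $2^k + (k-3)2^{n-k+2} - n 2^{n-k+1}$, $2^{n-2k+2}-1$, and $2^k-2$ emerge from summing geometric-type contributions over the $k-2$ seams. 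Assembling these case counts into $\alpha^{(k)}(n)$ and checking the boundary values $n = 2k-2$ (where the positive and negative contributions cancel) and $n = 3k-2$ will be the most calculation-heavy part, but it is mechanical once the structural lemmas are in place; these calculations are carried out in Sections \ref{2-n-k-1}--\ref{secstrad}, and the formal assembly is deferred to Section \ref{secproofthmpal}.
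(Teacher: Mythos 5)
Your architecture for part (ii) is the same as the paper's: isolate the non-internal contribution $\alpha^{(k)}(n)=\sum_j B^{(k)}(n,j)+S^{(k)}(n)$ via equation (\ref{recpbs}) and then compute the bordering and straddling counts separately, using Theorem \ref{suffix}, Lemma \ref{ab<wn} and Lemma \ref{00} to constrain what can close up across a seam. However, as written this part is only a plan: the actual values $B^{(k)}(n,j)=2^{j}-2^{n-k+1}$ for $k\le n\le 2k-3$, $n-k+2\le j\le k-1$ (Lemmas \ref{borderpal1}--\ref{bordernumber}) and the straddling counts $2^{n-2k+2}-1$ and $2^{k}-2$ (Lemmas \ref{straddling}--\ref{n=3k-2} and Theorem \ref{strathm}) are never derived, and those derivations are where essentially all of the work in the paper lies. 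That is a gap of omission rather than a wrong idea.

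The concrete error is in part (i). From Lemma \ref{struct2} one has $W^{(k)}_n n^{-1}=W^{(k)}_{n-1}(n-1)^{-1}\,(n-1)\,W^{(k)}_{n-1}(n-1)^{-1}$, and you assert that the palindromes created at the join are ``short (length $2$ or $3$)''. They are not: since $W^{(k)}_{n-1}(n-1)^{-1}$ is itself a palindrome (Lemma \ref{wn<k}), every word $u^{R}(n-1)u$ with $u$ a nonempty prefix of $W^{(k)}_{n-1}(n-1)^{-1}$ is a new palindrome centered at the unique occurrence of $n-1$, so there are $|W^{(k)}_{n-1}|-1=2^{n-1}-1$ of them, with lengths ranging up to $2^{n}-3$. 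The correct recurrence is therefore $P^{(k)}(n)=2P^{(k)}(n-1)+2^{n-1}-1$ (Lemma \ref{initial1}), and it is this exponentially growing additive term that produces the factor $n-2$ in the closed form. With a bounded seam contribution, as your argument would give, the recurrence $P(n)=2P(n-1)+c$ with $P(1)=0$ solves to $c\,(2^{n-1}-1)$, which is never equal to $2^{n-1}(n-2)+1$ for all $n$ in the range. So the ``short seam palindromes'' step would fail and part (i) would not follow from your recurrence.
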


To prove the theorem we divide this section in some subsections with respect to the values of $n$ and $k$.

\subsection{Palindromes in $W^{(k)}_n$ when $2\leq n \leq k-1$}\label{2-n-k-1}

\begin{lem}\label{wn<k}
	For $2\leq n \leq k-1$,	$W^{(k)}_n\, n^{-1}$ is a palindromic word.
\end{lem}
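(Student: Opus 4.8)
## Proof proposal for Lemma~\ref{wn<k}

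The plan is to prove the statement by induction on $n$, using the recursive structure of $W^{(k)}_n$ provided by Lemma~\ref{struct1} and especially by Lemma~\ref{struct2}. The base case $n=2$ is immediate: $W^{(k)}_2 = \varphi_k^2(0) = \varphi_k(01) = 0102$, so $W^{(k)}_2\,2^{-1} = 010$, which is a palindrome. For the inductive step, I would assume $2\le m\le k-2$ and that $W^{(k)}_m\,m^{-1}$ is a palindrome, and deduce the same for $m+1$.

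The key identity to exploit is the one from Lemma~\ref{struct2}, namely $W^{(k)}_{m+1} = W^{(k)}_m\, W^{(k)}_m (m)^{-1}\,(m+1)$, so that
\begin{equation*}
W^{(k)}_{m+1}\,(m+1)^{-1} = W^{(k)}_m\,\bigl(W^{(k)}_m\,m^{-1}\bigr).
\end{equation*}
Write $U = W^{(k)}_m\,m^{-1}$, which by the induction hypothesis is a palindrome, i.e. $U^R = U$. Then $W^{(k)}_m = Um$ and
\begin{equation*}
W^{(k)}_{m+1}\,(m+1)^{-1} = U\,m\,U = (Um)U.
\end{equation*}
Its reversal is $U^R\,m\,U^R = U\,m\,U$, which equals the word itself. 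Hence $W^{(k)}_{m+1}\,(m+1)^{-1}$ is a palindrome, and the induction is complete. (Here one should note that $m$ is a single letter, so $W^{(k)}_m\,m^{-1}$ really does just strip the last letter, which is legitimate because Lemma~\ref{lastdig} — or directly the structure in Lemma~\ref{struct1} — guarantees $W^{(k)}_m$ ends in the letter $m$; and similarly $W^{(k)}_{m+1}$ ends in $m+1$, so $(m+1)^{-1}$ is well defined.)

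I expect no serious obstacle here: the whole argument hinges on recognizing that Lemma~\ref{struct2} exhibits $W^{(k)}_{m+1}\,(m+1)^{-1}$ in the palindrome-friendly shape ``(palindrome)$\cdot$(letter)$\cdot$(same palindrome)''. The only point requiring a little care is the bookkeeping with the right-inverses $m^{-1}$ and $(m+1)^{-1}$ — one must make sure these are always applied to words genuinely ending in the corresponding letter so that the cancellation is valid — and checking that the range $2\le n\le k-1$ is respected throughout (in particular that Lemma~\ref{struct2} applies at index $m+1\le k-1$, which forces $m\le k-2$ in the inductive step, consistent with the base case $m=2$ already covering the smallest relevant index).
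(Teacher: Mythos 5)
Your proof is correct and follows essentially the same route as the paper's: induction on $n$ with base case $W^{(k)}_2\,2^{-1}=010$, and the inductive step rewriting $W^{(k)}_{m+1}(m+1)^{-1}$ via Lemma~\ref{struct2} as $U\,m\,U$ with $U=W^{(k)}_m\,m^{-1}$ a palindrome by hypothesis. Your added remarks on the validity of the right-cancellations and the index range are careful touches the paper leaves implicit, but the argument is the same.
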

\begin{proof}
	{We prove this by induction on $n$. Since for every $k>2$ we have $W^{(k)}_2=0102$ the first step of the induction is true. Suppose $n=j<k-1$, the word $W^{(k)}_j\, j^{-1}$ is palindrome. Now using Lemma \ref{struct2} we have
		\begin{align*}
		W^{(k)}_{j+1}(j+1)^{-1}&=W^{(k)}_{j} W^{(k)}_{j}j^{-1}\\
		&=W^{(k)}_{j}j^{-1} j W^{(k)}_{j}j^{-1}
		\end{align*}
		which is a palindrome word by induction hypothesis.}
\end{proof}

\begin{lem} \label{initial1}
	For every $2 \leq n \leq k-1$, $P^{(k)}(n)=2P^{(k)}(n-1)+2^{n-1}-1$ and
 $P^{(k)}(1)=0$.	
\end{lem}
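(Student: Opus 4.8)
The plan is to prove the recurrence $P^{(k)}(n)=2P^{(k)}(n-1)+2^{n-1}-1$ for $2\le n\le k-1$ by exploiting the decomposition $W^{(k)}_n=W^{(k)}_{n-1}\,W^{(k)}_{n-1}(n-1)^{-1}\,n$ from Lemma \ref{struct2}, together with the palindromicity of $W^{(k)}_n\,n^{-1}$ from Lemma \ref{wn<k}. First I would dispose of the base case $P^{(k)}(1)=0$: since $W^{(k)}_1=01$ has no palindrome factor of length $\ge 2$ (the only factors of length $\ge 2$ are $01$ itself), and here ``palindrome'' is counted among factors of length at least $2$ as in the rest of the paper, we get $P^{(k)}(1)=0$. (If length-$1$ factors were also counted the bookkeeping would shift by a constant, but the formula in Theorem \ref{recppthm}(i) at $n=1$ gives $2^{0}(1-2)+1=0$, confirming the convention.)

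For the inductive step, write $U:=W^{(k)}_{n-1}$ and $V:=W^{(k)}_{n-1}(n-1)^{-1}$, so that $W^{(k)}_n=U\,V\,n$ and, by Lemma \ref{wn<k}, $UV$ is a palindrome (it equals $W^{(k)}_n\,n^{-1}$). Every occurrence of a palindrome factor in $W^{(k)}_n$ falls into exactly one of three classes according to its center position relative to the split point: (a) those lying entirely inside the first block $U$; (b) those lying entirely inside the second block $V\,n$ (equivalently inside the suffix $W^{(k)}_{n-1}$ shifted, since $V\,n=W^{(k)}_{n-1}$ as a word — here I use that $W^{(k)}_{n-1}$ ends in $n-1$, so $V\,n$ is $W^{(k)}_{n-1}$ with its last letter replaced by $n$; I must be slightly careful, because replacing the last letter changes which palindromes end there); and (c) those straddling the boundary between $U$ and $V\,n$. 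The key point is that, because $W^{(k)}_n\,n^{-1}=UV$ is itself a palindrome and $U,V$ are reversals of each other up to the deleted last letter, the straddling palindromes are governed by the matching $U\leftrightarrow V^{R}$, and one counts that there are exactly $2^{n-1}-1$ of them: these are the palindromes centered at (or adjacent to) the midpoint of $UV$, and they are in bijection with the nonempty ``central'' factors, of which $|U|=2^{n-1}$ gives $2^{n-1}-1$ after excluding the degenerate/length-one count consistently with the convention. Classes (a) and (b) each contribute $P^{(k)}(n-1)$: class (a) literally, and class (b) because the last-letter change from $n-1$ to $n$ destroys no palindrome of length $\ge 2$ (a palindrome ending in $n$ would have to begin with $n$, but $n$ occurs only once in $W^{(k)}_n$ by Lemma \ref{lastdig}) and creates none, and also is not the beginning of any new palindrome occurrence not already counted. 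Summing the three contributions yields $P^{(k)}(n)=2P^{(k)}(n-1)+2^{n-1}-1$.

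The main obstacle I anticipate is the careful verification that the straddling palindromes number exactly $2^{n-1}-1$, and that no palindrome is double-counted at the seams. This requires spelling out precisely what a ``bordering palindrome'' centered near the midpoint of $UV$ looks like: since $UV$ is a palindrome of even length $2^{n-1}$ whose two halves are $U$ and $V=U^{R}$ (using Lemma \ref{wn<k} and the fact that $V$ is $U^{R}$ with last letter deleted — I would state and use that $W^{(k)}_{n-1}=(W^{(k)}_{n-1}(n-1)^{-1})^{R}\cdot(\text{something})$, or more cleanly derive $V^{R}=$ a prefix of $U$), every prefix of the first half together with its mirror image yields a palindrome centered at the midpoint, and there are $|U|-1=2^{n-1}-1$ nonempty proper such choices, plus one must check the palindromes whose center sits off the exact midpoint (because of the trailing $n$) contribute nothing new. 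I would organize this as: (1) establish $U^{R}=$ (prefix of $W^{(k)}_{n-1}$ of the appropriate length) via Lemmas \ref{struct1}, \ref{struct2}, \ref{wn<k}; (2) classify occurrences by center position; (3) count each class; (4) check disjointness using that the letter $n$ occurs exactly once and the letter $n-1$ occurs at controlled positions (Lemma \ref{lastdig}). Steps (2)–(4) are the delicate combinatorial heart; steps (1) and the arithmetic are routine.
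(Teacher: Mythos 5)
Your proposal follows essentially the same route as the paper: decompose $W^{(k)}_n = W^{(k)}_{n-1}\,W^{(k)}_{n-1}(n-1)^{-1}\,n$ via Lemma \ref{struct2}, use the palindromicity of $W^{(k)}_n n^{-1}$ from Lemma \ref{wn<k}, and split palindrome occurrences into those inside a copy of $W^{(k)}_{n-1}$ (contributing $2P^{(k)}(n-1)$) and those crossing the seam (contributing $2^{n-1}-1$). Your final count is right, but one structural claim is off: writing $U=W^{(k)}_{n-1}$ and $V=W^{(k)}_{n-1}(n-1)^{-1}$, the word $UV=W^{(k)}_n n^{-1}$ has odd length $2^n-1$, not even length, and $V$ is not $U^R$; rather $V$ is itself a palindrome and $UV=V\,(n-1)\,V$. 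Consequently the seam-crossing palindromes are exactly the odd-length palindromes $a^R(n-1)a$ centered at the unique occurrence of $n-1$ (uniqueness, via Lemma \ref{lastdig}, forces any palindrome containing that letter to be centered there), with $a$ a nonempty prefix of $V$, giving $|V|=2^{n-1}-1$ of them. This is precisely how the paper closes the ``delicate'' step you flag, and it also settles your disjointness worry, since no palindrome of length at least $2$ lying inside either copy of $W^{(k)}_{n-1}$ can contain that copy's final letter.
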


\begin{proof}
	{Since the digit $n$ just occurs in the last position of $W^{(k)}_n$,
		every palindrome factors of $W^{(k)}_n$ should be a palindrome factor of $W^{(k)}_n (n)^{-1}$. By Lemma \ref{struct2}, for every $n \leq k-1$, we have
\begin{equation}\label{initialeq}
W^{(k)}_n n^{-1}=W^{(k)}_{n-1} W^{(k)}_{n-1}(n-1)^{-1}=W^{(k)}_{n-1}(n-1)^{-1} (n-1) W^{(k)}_{n-1}(n-1)^{-1}
\end{equation}
From (\ref{initialeq}), we conclude that $n-1$ occurs once in $W^{(k)}_n(n)^{-1}$. Using Lemma \ref{wn<k} and again using (\ref{initialeq}), we  found that
a factor $p$ of $W^{(k)}_n(n)^{-1}$ is palindrome if and only if it is either a palindromic factor of $W^{(k)}_{(n-1)}$ or $p=a (n-1) a$, where $a$ is a nonempty prefix of  $W^{(k)}_{(n-1)}$.
Therefore, $P^{(k)}(n)=2P^{(k)}(n-1)+|W^{(k)}_{n-1}|-1= 2P^{(k)}(n-1)+ 2^{n-1}-1$.}
\end{proof}

\begin{thm}\label{initialthm}
	For every $1 \leq n \leq k-1$, we have the following explicit formula $P^{(k)}(n)=2^{n-1}(n-2)+1$.
\end{thm}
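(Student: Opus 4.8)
The plan is to derive the closed form $P^{(k)}(n)=2^{n-1}(n-2)+1$ from the recurrence established in Lemma \ref{initial1}, namely $P^{(k)}(n)=2P^{(k)}(n-1)+2^{n-1}-1$ for $2\le n\le k-1$, together with the initial value $P^{(k)}(1)=0$. This is a routine linear recurrence with an inhomogeneous term, so the whole argument is a short induction on $n$.

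First I would check the base case $n=1$: the formula gives $2^{0}(1-2)+1=-1+1=0$, which matches $P^{(k)}(1)=0$ from Lemma \ref{initial1}. (One could also note $n=2$ gives $2^{1}(0)+1=1$, consistent with $W^{(k)}_2=0102$ having the single palindrome factor repetitions, but the induction only needs one base case.) Then for the inductive step, assuming $P^{(k)}(n-1)=2^{n-2}(n-3)+1$ for some $n$ with $3\le n\le k-1$ (or $n=2$ directly from the base case), I would substitute into the recurrence:
\begin{align*}
P^{(k)}(n) &= 2P^{(k)}(n-1)+2^{n-1}-1\\
&= 2\bigl(2^{n-2}(n-3)+1\bigr)+2^{n-1}-1\\
&= 2^{n-1}(n-3)+2+2^{n-1}-1\\
&= 2^{n-1}(n-3+1)+1\\
&= 2^{n-1}(n-2)+1,
\end{align*}
which is exactly the claimed formula for $n$. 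This closes the induction and establishes the theorem for all $1\le n\le k-1$.

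There is essentially no obstacle here: the only thing to be careful about is the indexing at the low end, since Lemma \ref{initial1} supplies the recurrence only for $2\le n\le k-1$ and separately gives $P^{(k)}(1)=0$, so the induction must be anchored at $n=1$ and the recurrence applied for $n\ge 2$. Alternatively, one can avoid induction entirely by unrolling the recurrence: dividing by $2^{n}$ turns it into a telescoping sum for $P^{(k)}(n)/2^{n}$, giving $P^{(k)}(n)/2^{n} = P^{(k)}(1)/2 + \sum_{m=2}^{n}\bigl(1/2 - 2^{-m}\bigr)$, and evaluating the geometric-type sum yields the same closed form. Either route is elementary; I would present the induction since it is the cleanest and matches the style of the surrounding lemmas.
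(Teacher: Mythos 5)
Your proposal is correct and is exactly the argument the paper intends: the paper's proof of this theorem is simply the one-line remark that the formula follows by induction on $n$ from Lemma \ref{initial1}, and you have supplied the base case $P^{(k)}(1)=0$ and the inductive computation $2\bigl(2^{n-2}(n-3)+1\bigr)+2^{n-1}-1=2^{n-1}(n-2)+1$ that the paper leaves to the reader. No gaps; same approach.
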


\begin{proof}
{We can easily prove the formula by induction on $n$.}
\end{proof}

\subsection{Bordering Palindromes of  $W^{(k)}_{n}$}\label{secborder}

In this section we consider the bordering palindromes of $W^{(k)}_n$.
 A bordering palindrome $B$ of $W^{(k)}_{n}$ is called a {\it maximal bordering palindrome} if there is no longer bordering palindrome factor of $W^{(k)}_{n}$
 with the same center position.

\begin{lem} \label{centerpal1}
	Let $k\leq n$ and $B_j$ be one of the bordering palindromes of type $j$ of $W^{(k)}_n$. Then
$c_p(B_j, W^{(k)}_n)=|W^{(k)}_j|$ and $j$ is the center of the palindrome $B_j$.
\end{lem}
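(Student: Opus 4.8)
The plan is to exploit the factorization of $W^{(k)}_n$ given by Lemma \ref{struct} together with the observation that a bordering palindrome of type $j$ straddles the boundary between $W^{(k)}_j$ and the block $\prod_{i=j-1}^{n-k+1}W^{(k)}_i$ that follows it inside $W^{(k)}_n$. First I would fix $n\geq k$ and $j$ with $n-k+2\leq j\leq n-1$, and write $B_j=X_jY_j$ with $X_j\neq\epsilon$ a suffix of $W^{(k)}_j$ and $Y_j\neq\epsilon$ a prefix of $\prod_{i=j-1}^{n-k+1}W^{(k)}_i$, exactly as in the Type 2 definition. By Lemma \ref{struct}, the prefix of $W^{(k)}_n$ of length $|W^{(k)}_{n-1}\cdots W^{(k)}_j|$ is $\prod_{i=n-1}^{j}W^{(k)}_i$, whose last block is $W^{(k)}_j$; hence the occurrence of $B_j$ sits with $X_j$ occupying the end of that $W^{(k)}_j$-block and $Y_j$ starting immediately after it. This pins down the center position: with $U=\prod_{i=n-1}^{j+1}W^{(k)}_i\cdot\bigl(W^{(k)}_j X_j^{-1}\bigr)$ as the prefix before $B_j$, we get $|U|=|W^{(k)}_{n-1}\cdots W^{(k)}_{j}|-|X_j|$, so $c_p(B_j,W^{(k)}_n)=|U|+\frac{|B_j|+1}{2}=|W^{(k)}_{n-1}\cdots W^{(k)}_j|-|X_j|+\frac{|X_j|+|Y_j|+1}{2}$. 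For this to equal $|W^{(k)}_j|$ — equivalently for the center to fall exactly on the boundary — one needs $|X_j|=|Y_j|$, i.e. $X_j$ and $Y_j$ have the same length.

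So the real content is showing $|X_j|=|Y_j|$ for any bordering palindrome of type $j$, i.e. the center of $B_j$ lands precisely at the junction between $W^{(k)}_j$ and the next block. I would prove this by extracting the letter straddling the boundary: the last letter of $W^{(k)}_j$ is $j$ (by Lemma \ref{lastdig}), and $j$ occurs exactly once in $W^{(k)}_j$, at its very end. The block following $W^{(k)}_j$ in the relevant prefix is $W^{(k)}_{j-1}W^{(k)}_{j-2}\cdots$, each factor of which starts with $0$ and — crucially — by Lemma \ref{lastdig} contains no letter as large as $j$ (its maximal letter is $j-1$ or smaller). Also by Lemma \ref{lastdig}, within $W^{(k)}_j$ every letter other than the terminal $j$ is strictly less than $j$. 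Therefore $j$ occurs in the window occupied by $B_j$ exactly once, at the last position of the $X_j$-part. Since $B_j$ is a palindrome, the letters must match symmetrically about its center; the unique occurrence of the maximal letter $j$ forces that occurrence to be the center itself (an even-length palindrome would require two matching copies of $j$, which is impossible). Hence $B_j$ has odd length, its center letter is $j$, and the center sits at the boundary, which gives simultaneously that $j$ is the center of $B_j$ and, by the computation above, that $c_p(B_j,W^{(k)}_n)=|W^{(k)}_j|$.

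The main obstacle will be the careful bookkeeping needed to justify that no letter $\ge j$ other than the terminal $j$ of $W^{(k)}_j$ appears anywhere inside the window $B_j$ occupies — in particular, that the blocks $W^{(k)}_{j-1},\dots,W^{(k)}_{n-k+1}$ contribute only letters strictly smaller than $j$, which follows from Lemma \ref{lastdig} since each such index is at most $j-1$, but one must be precise that $Y_j$ is a prefix of that concatenation and hence only sees those small letters. A secondary point to handle cleanly is ruling out the even-length case: if $|B_j|$ were even, matching $B_j=B_j^R$ would demand the single maximal letter $j$ be paired with another $j$ inside the window, contradicting its uniqueness; this confirms $B_j$ has a genuine center letter, and that letter is $j$. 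Once these facts are in place, the index arithmetic for $c_p$ is routine.
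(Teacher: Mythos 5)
Your central argument is exactly the paper's: by Lemma \ref{lastdig} the digit $j$ is the largest digit of $W^{(k)}_j$ and occurs once, at its end, while every block $W^{(k)}_{j-1},\dots,W^{(k)}_{n-k+1}$ has all digits at most $j-1$; hence $j$ occurs exactly once in the window containing $B_j$, and a palindrome containing a uniquely-occurring letter must have odd length with that letter as its center. The paper states this in three lines; your version supplies the same reasoning with the even-length case made explicit.

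The part that does not work is your index arithmetic. You claim that for the center to fall ``on the boundary'' one needs $|X_j|=|Y_j|$; but that would make $B_j$ of even length with empty center, contradicting the conclusion you correctly derive two sentences later (odd length, center letter $j$). The correct relation is $|X_j|=|Y_j|+1$, since the center is the \emph{last letter of} $X_j$, and substituting this into your own formula gives $c_p(B_j,W^{(k)}_n)=\sum_{i=j}^{n-1}|W^{(k)}_i|$, which equals $|W^{(k)}_j|$ only when $j=n-1$. The discrepancy is really with the lemma's literal statement: the paper's proof works inside the factor $W=W^{(k)}_jW^{(k)}_{j-1}\cdots W^{(k)}_{n-k+1}$, where the unique $j$ sits at position $|W^{(k)}_j|$, and that is also the only form in which the lemma is later used (Lemma \ref{bj} needs only that $j$ is the center). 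So either measure the center position within $W$ as the paper implicitly does, or flag the position formula as holding in that relative sense; as written, your derivation both contradicts itself and does not reach the stated equality.
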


\begin{proof}
	{By definition of bordering palindrome of type $j$ of $W^{(k)}_n$, we know that $B_j$
is a factor of $W=W^{(k)}_{j} W^{(k)}_{j-1} \ldots W^{(k)}_{n-k+1}$, and it contains the last digit of $W^{(k)}_{j}$ which is $j$. By Lemma \ref{lastdig} $|W|_j=1$, whence the result follows.}
\end{proof}

\begin{lem} \label{borderpal1}
	Let $k\leq n$ and $B_j$ be a bordering palindrome of type $j$ of $W^{(k)}_n$. Then $n\leq 2k-3$ and $n-k+2\leq j\leq k-1$.
\end{lem}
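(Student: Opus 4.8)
The plan is to reduce the whole statement to the single inequality $j\le k-1$. By the very definition of a bordering palindrome of type $j$ (the Type 2 case above) we already have $n-k+2\le j\le n-1$; hence once $j\le k-1$ is established we immediately get $n\le j+k-2\le 2k-3$, together with the claimed two-sided bound on $j$. So the whole argument comes down to ruling out $j\ge k$.

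To do that I would compare the two digits immediately flanking the center of $B_j$. By Lemma \ref{centerpal1} the center of $B_j$ is the digit $j$ and $c_p(B_j,W^{(k)}_n)=|W^{(k)}_j|$. Writing $B_j=X_jY_j$ and $W^{(k)}_n=UB_jV$ with $X_j$ a nonempty suffix of the prefix $W^{(k)}_j$ of $W^{(k)}_n$ (so $|U|=|W^{(k)}_j|-|X_j|$), the identity $c_p(B_j,W^{(k)}_n)=|W^{(k)}_j|$ forces $|Y_j|=|X_j|-1$; since $Y_j\ne\epsilon$ this gives $|X_j|\ge 2$, so $B_j$ has radius at least $1$ about its center, whence
\[
W^{(k)}_n[\,|W^{(k)}_j|-1\,]\;=\;W^{(k)}_n[\,|W^{(k)}_j|+1\,].
\]
The left-hand digit is the penultimate digit of the prefix $W^{(k)}_j$: by Theorem \ref{suffix}, writing $j\equiv j''\pmod k$ with $0\le j''\le k-1$, it equals $j-j''$ if $j''\ne 0$ and $j-k+1$ if $j''=0$, and in either case it is strictly positive as soon as $j\ge k$ (indeed $j-j''=k\lfloor j/k\rfloor\ge k$, resp. $j-k+1\ge 1$). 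The right-hand digit is the first digit of the block following $W^{(k)}_j$ inside $W^{(k)}_n$: since consecutive words satisfy $W^{(k)}_{m-1}\lhd W^{(k)}_m$ for all $m\ge 1$ (immediate from (\ref{struct1eq})--(\ref{structeq})), the word $W^{(k)}_{j+1}$ is a prefix of $W^{(k)}_n$ (as $j+1\le n$); and for $j\ge k$, Lemma \ref{struct} gives $W^{(k)}_{j+1}=\prod_{i=j}^{j-k+2}W^{(k)}_i\,(k\oplus W^{(k)}_{j-k+1})$, which begins with $W^{(k)}_jW^{(k)}_{j-1}$ because $k>2$. Hence that digit is the first digit of $W^{(k)}_{j-1}$, namely $0$. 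So a positive digit would equal $0$, a contradiction; therefore $j\le k-1$, and, as noted, $n\le 2k-3$.

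There are only two routine points to keep straight, and neither is a genuine obstacle. First, Theorem \ref{suffix} is applied to $W^{(k)}_j$, so one needs $|W^{(k)}_j|\ge 2$, i.e. $j\ge 1$; this holds because $j\ge n-k+2\ge 2$ (using $n\ge k$). Second, in the case $j\ge k$ one uses that $\prod_{i=j}^{j-k+2}W^{(k)}_i$ contains both $W^{(k)}_j$ and $W^{(k)}_{j-1}$, which it does since $k>2$. The only conceptual step is realizing that the whole lemma is governed by the two neighbours of the central digit $j$; once that is seen, Theorem \ref{suffix} disposes of the left neighbour and Lemma \ref{struct} (together with the standard fact, used repeatedly in the paper, that every $W^{(k)}_i$ begins with $0$) disposes of the right one.
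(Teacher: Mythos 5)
Your proof is correct and follows essentially the same route as the paper: both arguments compare the two digits flanking the central $j$ of $B_j$, using Theorem \ref{suffix} to identify the penultimate digit of $W^{(k)}_j$ and the fact that the block following $W^{(k)}_j$ begins with $0$, and then conclude $j\le k-1$ (hence $n\le 2k-3$) from palindromicity. Your write-up is in fact slightly more careful than the paper's, since you explicitly verify that $B_j$ has radius at least one about its center and you treat the case $j\equiv 0\pmod k$ of Theorem \ref{suffix} separately.
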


\begin{proof}
{Let $c=c_p(B_j, W^{(k)}_n)$. By Lemmas \ref{suffix} and  \ref{centerpal1}, we have $B_j[c]=j$ and $B_j[c-1]=j-j'$, where $j \equiv j'\;(mod\; k)$ and
$0\leq j'\leq k-1$. By equation (\ref{structeq}), the prefix $W^{(k)}_j$ of $W^{(k)}_n$, followed by $W^{(k)}_{j-1}$, whose first digit is $0$, hence
$B_j[c+1]=0$. On the other hand by definition of $c$, we obtain $B_j[c+1]=B_j[c-1]=j-j'$. Therefore, $j=j'$, which shows that
 $0\leq j\leq k-1$. On the other hand, by equation (\ref{structeq}), $n-k+2\leq j \leq n-1$. Hence $n-k+2\leq j\leq k-1$, which shows that $n\leq 2k-3$, as desired.}
\end{proof}

\begin{lem}\label{bj}
Let $k\leq n\leq 2k-3$ and $n-k+2\leq j\leq k-1$. Then the maximal bordering palindrome of type $j$ of
$W^{(k)}_n$ is $B_j= (W^{(k)}_{j-1}W^{(k)}_{j-2}\ldots W^{(k)}_{n-k+1})^R j(W^{(k)}_{j-1}W^{(k)}_{j-2}\ldots W^{(k)}_{n-k+1})$.
\end{lem}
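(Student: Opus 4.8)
The plan is to deduce the lemma from the description of bordering palindromes already set up, reducing it to two elementary factor computations. Put $Z_j:=\prod_{i=j-1}^{n-k+1}W^{(k)}_i$, so that the word in the statement is $B_j=Z_j^{R}\,j\,Z_j$; note $Z_j\neq\epsilon$ precisely because $j\ge n-k+2$. By Lemma \ref{centerpal1} (together with the Type 2 description), any bordering palindrome of type $j$ of $W^{(k)}_n$ has $j$ as its centre --- the unique occurrence of $j$ in $W^{(k)}_{j}W^{(k)}_{j-1}\cdots W^{(k)}_{n-k+1}$, located at the right end of the block $W^{(k)}_j$ of the decomposition (\ref{structeq}) --- and its piece $X_j$, being a suffix of $W^{(k)}_j$, ends exactly at that centre; hence it has the form $Y^{R}\,j\,Y$, where $Y$, the piece to the right of the centre, is a nonempty prefix of $\prod_{i=j-1}^{n-k+1}W^{(k)}_i=Z_j$. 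In particular every such palindrome satisfies $|Y|\le|Z_j|$, so once I show that the choice $Y=Z_j$ itself yields a factor of $W^{(k)}_n$ --- i.e. that $B_j$ occurs centred at that end of the $W^{(k)}_j$-block --- I get simultaneously that $B_j$ is a bordering palindrome of type $j$ and that it is the (unique) longest one, which is the claim.

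For the right half of $B_j$ I would just read off (\ref{structeq}): under the hypotheses $k\le n\le 2k-3$ and $n-k+2\le j\le k-1$ the index $j$ lies in $\{n-1,\dots,n-k+1\}$ and is not the smallest of these, so in $W^{(k)}_n$ the block $W^{(k)}_j$ is immediately followed by
\[
W^{(k)}_{j-1}W^{(k)}_{j-2}\cdots W^{(k)}_{n-k+1}\,(k\oplus W^{(k)}_{n-k})=Z_j\,(k\oplus W^{(k)}_{n-k}).
\]
Hence the $|Z_j|$ letters of $W^{(k)}_n$ immediately to the right of the chosen centre spell exactly $Z_j$.

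For the left half, note first that $j\ge n-k+2\ge 2$ since $n\ge k$, so Lemmas \ref{struct1} and \ref{wn<k} both apply to $W^{(k)}_j$. By Lemma \ref{struct1}, $W^{(k)}_j\,j^{-1}=W^{(k)}_{j-1}W^{(k)}_{j-2}\cdots W^{(k)}_{0}$, and since $n-k+1\ge1$ the word $Z_j=W^{(k)}_{j-1}\cdots W^{(k)}_{n-k+1}$ is a prefix of it. But $W^{(k)}_j\,j^{-1}$ is a palindrome by Lemma \ref{wn<k}, and a prefix of a palindrome, reversed, is a suffix of it; therefore $Z_j^{R}$ is a suffix of $W^{(k)}_j\,j^{-1}$. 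In particular $|Z_j|\le|W^{(k)}_j|-1$, so the $|Z_j|$ letters of $W^{(k)}_n$ immediately to the left of the centre stay inside the block $W^{(k)}_j$ and spell $Z_j^{R}$. Combining the two halves, $B_j=Z_j^{R}\,j\,Z_j$ is a palindromic factor of $W^{(k)}_n$ centred at the end of the $W^{(k)}_j$-block, with $X_j=Z_j^{R}j$ a nonempty suffix of $W^{(k)}_j$ and $Y_j=Z_j$ a nonempty prefix of $\prod_{i=j-1}^{n-k+1}W^{(k)}_i$; so it is a bordering palindrome of type $j$, and by the first paragraph the maximal one.

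Most of this is bookkeeping with (\ref{structeq}) and the index inequalities. The step I would flag as the crux is the left-half computation --- that $Z_j^{R}$ is a suffix of $W^{(k)}_j$ --- since that is where the palindromicity of $W^{(k)}_j\,j^{-1}$ (Lemma \ref{wn<k}) does the real work, and it is also what guarantees that $B_j$ does not overrun the block $W^{(k)}_j$ on the left, which is exactly what makes $B_j$ a bordering (rather than merely straddling) palindrome and pins down its length.
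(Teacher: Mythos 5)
Your proposal is correct and follows essentially the same route as the paper's proof: both identify $j$ as the centre via Lemma \ref{centerpal1}, use Lemma \ref{struct1} to see that $W^{(k)}_{j-1}\cdots W^{(k)}_{n-k+1}$ is a prefix of $W^{(k)}_j\,j^{-1}$, and invoke the palindromicity of $W^{(k)}_j\,j^{-1}$ (Lemma \ref{wn<k}) to obtain the left half and conclude maximality. You simply spell out more explicitly than the paper why $B_j$ actually occurs as a factor and why no longer bordering palindrome of type $j$ can exist.
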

\begin{proof}
{ By definition of bordering palindrome, $B_j$ is a palindromic factor of the following word
$$W^{(k)}_j j^{-1} j W^{(k)}_{j-1}W^{(k)}_{j-2}\ldots W^{(k)}_{n-k+1},$$ and by Lemma \ref{centerpal1}, $j$ is the center $B_j$. Now, by Lemma \ref{wn<k}, $W^{(k)}_j j^{-1}$ is palindrome. By Lemma \ref{borderpal1}, $j\leq k-1$ and $n-k+1\leq j-1$. On the other hand by Lemma \ref{struct1} we have $W^{(k)}_{j-1}W^{(k)}_{j-2}\ldots W^{(k)}_{n-k+1} \lhd W^{(k)}_j j^{-1}$. From these two points we conclude that
$$B_j=(W^{(k)}_{j-1}W^{(k)}_{j-2}\ldots W^{(k)}_{n-k+1})^R j(W^{(k)}_{j-1}W^{(k)}_{j-2}\ldots W^{(k)}_{n-k+1}).$$}
\end{proof}

\begin{lem}\label{lengthbj}
Let $n$ and $j$ be two integers with $n\leq 2k-3$ and $n-k+2\leq j\leq k-1$ and $B_j$ be a bordering palindrome of type $j$. Then $|B_j|=2(|W^{(k)}_{j}|-|W^{(k)}_{n-k+1}|)+1$.
\end{lem}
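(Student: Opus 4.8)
The plan is to read off $|B_j|$ directly from the explicit form of $B_j$ supplied by Lemma \ref{bj}, and then rewrite the resulting sum of block–lengths as the difference $|W^{(k)}_j|-|W^{(k)}_{n-k+1}|$ by means of Lemma \ref{struct1}. Here $B_j$ denotes the maximal bordering palindrome of type $j$ produced in Lemma \ref{bj}; this is the object whose length the statement refers to.

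First I would write $U=W^{(k)}_{j-1}W^{(k)}_{j-2}\ldots W^{(k)}_{n-k+1}$, so that Lemma \ref{bj} reads $B_j=U^{R}\,j\,U$. Since reversal preserves length, $|B_j|=2|U|+1=2\sum_{i=n-k+1}^{j-1}|W^{(k)}_i|+1$. The index range $n-k+1\le i\le j-1$ is non-degenerate precisely because the hypotheses give $n\ge k$ (hence $n-k+1\ge 1$) and $j\ge n-k+2$ (hence $j-1\ge n-k+1$).

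Next I would invoke Lemma \ref{struct1}, which applies both to $W^{(k)}_j$ and to $W^{(k)}_{n-k+1}$: indeed $2\le j\le k-1$ since $n\ge k$ and $j\le k-1$, and $1\le n-k+1\le k-2$ since $n\ge k$ and $n\le 2k-3$. Lemma \ref{struct1} gives $|W^{(k)}_m|=1+\sum_{i=0}^{m-1}|W^{(k)}_i|$ for each such $m$, so by telescoping
\[
|W^{(k)}_j|-|W^{(k)}_{n-k+1}|=\sum_{i=0}^{j-1}|W^{(k)}_i|-\sum_{i=0}^{n-k}|W^{(k)}_i|=\sum_{i=n-k+1}^{j-1}|W^{(k)}_i|.
\]
Substituting this into the expression above yields $|B_j|=2\bigl(|W^{(k)}_j|-|W^{(k)}_{n-k+1}|\bigr)+1$, which is the claim.

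There is essentially no obstacle: the only points needing a moment's care are verifying that the indices $j$ and $n-k+1$ lie in the range $[1,k-1]$ where Lemma \ref{struct1} is valid (this is exactly what the hypotheses $n\le 2k-3$ and $n-k+2\le j\le k-1$ guarantee), and identifying $B_j$ with the maximal bordering palindrome of Lemma \ref{bj}. If one prefers to sidestep Lemma \ref{struct1}, the same conclusion follows from $|W^{(k)}_m|=2^m$ for $0\le m\le k-1$ (a consequence of Lemma \ref{struct2}), reducing everything to the geometric identity $\sum_{i=n-k+1}^{j-1}2^{i}=2^{j}-2^{n-k+1}$; both routes are equally short.
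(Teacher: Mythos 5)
Your proof is correct and follows essentially the same route as the paper: both start from the explicit form $B_j=U^R\,j\,U$ of Lemma \ref{bj} and then use the decomposition (\ref{struct1eq}) to identify $|U|=\sum_{i=n-k+1}^{j-1}|W^{(k)}_i|$ with $|W^{(k)}_j|-|W^{(k)}_{n-k+1}|$ (the paper does this by cancelling the common suffix $W^{(k)}_{n-k}\ldots W^{(k)}_0\,j$ of $W^{(k)}_j$, you by telescoping the length formula, which amounts to the same thing). Your explicit checks of the index ranges and the identification of $B_j$ with the maximal bordering palindrome of Lemma \ref{bj} are welcome clarifications but do not change the argument.
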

\begin{proof}
{By Lemma  \ref{bj} and (\ref{struct1eq}) the maximal bordering palindrome of type $j$ of $W^{(k)}_n$ is of the following form:
	\begin{align*}
B_j=& (W^{(k)}_{j-1}W^{(k)}_{j-2}\ldots W^{(k)}_{n-k+1})^R j(W^{(k)}_{j-1}W^{(k)}_{j-2}\ldots W^{(k)}_{n-k+1})\\
=& (W^{(k)}_{0}  \ldots W^{(k)}_{n-k})^{-1}W^{(k)}_{j} (j)^{-1} (j)  W^{(k)}_{j}(W^{(k)}_{n-k}  \ldots W^{(k)}_{0})^{-1}.
\end{align*}
Hence
\begin{align*}
|B_j|&=2 |W^{(k)}_{j}(W^{(k)}_{n-k}  \ldots W^{(k)}_{0})^{-1}|+1\\
     &=2|W^{(k)}_{j}(W^{(k)}_{n-k+1})^{-1}|+1\\
     &=2(|W^{(k)}_{j}|-|W^{(k)}_{n-k+1}|)+1.
\end{align*}
}
\end{proof}

\begin{lem} \label{bordernumber}
Let $k\leq n$, then
	\begin{equation*}
	B^{(k)}(n,j)= \left\{
	\begin{array}{rl}
	2^j-2^{n-k+1} & \text{if } (k \leq n \leq 2k-3)\,\,\, \text{and}\,\,\, (n-k+2\leq j \leq k-1),\\
	0\;\;\;\;\;\;\;\;& \text{otherwise }.
	\end{array} \right.
	\end{equation*}
	\end{lem}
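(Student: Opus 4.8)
The plan is to convert $B^{(k)}(n,j)$ into a count of admissible radii at a single center position. By Lemma~\ref{borderpal1}, a bordering palindrome of type $j$ of $W^{(k)}_n$ exists only when $k\le n\le 2k-3$ and $n-k+2\le j\le k-1$, so in the complementary case $B^{(k)}(n,j)=0$ and we are done; assume these bounds henceforth. By Lemma~\ref{centerpal1} every bordering palindrome of type $j$ has the same center position $c_p=|W^{(k)}_j|$ and has the unique occurrence of the letter $j$ as its center. Being of odd length, such a palindrome is determined by its radius, so counting occurrences amounts to counting the radii that actually produce a bordering palindrome of type $j$.

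The key point is that a factor of a palindrome which shares its center is again a palindrome. Write the maximal bordering palindrome of type $j$, supplied by Lemma~\ref{bj}, as $B_j=A^{R}\,j\,A$ with $A=\prod_{i=j-1}^{n-k+1}W^{(k)}_i$; by Lemma~\ref{lengthbj} its radius equals $R_{\max}=|A|=|W^{(k)}_j|-|W^{(k)}_{n-k+1}|$. For each $r$ with $1\le r\le R_{\max}$, the central sub-word of $B_j$ of radius $r$ is a palindrome, and it is in fact a bordering palindrome of type $j$: its left half terminates at the last letter $j$ of the prefix $W^{(k)}_j$ of $W^{(k)}_n$ (Lemma~\ref{lastdig}) and has length $r+1\le R_{\max}+1=|W^{(k)}_j|-|W^{(k)}_{n-k+1}|+1\le|W^{(k)}_j|$, hence is a nonempty suffix of $W^{(k)}_j$; its right half has length $r\le R_{\max}=|A|$, hence is a nonempty prefix of $A=\prod_{i=j-1}^{n-k+1}W^{(k)}_i$. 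Conversely, any bordering palindrome of type $j$ has radius at least $1$ (its right half is nonempty) and at most $R_{\max}$ (maximality of $B_j$), so it is one of these central sub-words. Therefore $B^{(k)}(n,j)=R_{\max}=|W^{(k)}_j|-|W^{(k)}_{n-k+1}|$.

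It remains to evaluate these lengths. Corollary~\ref{correcsize} gives $|W^{(k)}_{i+1}|=2|W^{(k)}_i|$ for $0\le i\le k-2$, and since $|W^{(k)}_0|=1$ this yields $|W^{(k)}_i|=2^i$ for all $0\le i\le k-1$. In the regime under consideration $j\le k-1$ and $1\le n-k+1\le k-2$, so both indices lie in this range and $B^{(k)}(n,j)=2^j-2^{n-k+1}$, as claimed.

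I expect the main obstacle to be making the correspondence in the second paragraph watertight: one must be sure that symmetric shrinking of $B_j$ never pushes the left half past the start of $W^{(k)}_j$, never pushes the right half into $(k\oplus W^{(k)}_{n-k})$, and that no palindrome at this center longer than $B_j$ is bordering of type $j$. All three facts are exactly what Lemmas~\ref{bj} and~\ref{lengthbj} already encode, so the rest of the argument is routine bookkeeping together with the elementary length computation above.
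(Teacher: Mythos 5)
Your proposal is correct and follows essentially the same route as the paper: the zero case comes from Lemma~\ref{borderpal1}, and in the nontrivial range the count is identified with the radius $\frac{|B_j|-1}{2}=|W^{(k)}_j|-|W^{(k)}_{n-k+1}|=2^j-2^{n-k+1}$ of the maximal bordering palindrome from Lemmas~\ref{bj} and~\ref{lengthbj}. The only difference is that you make explicit the bijection between bordering palindromes of type $j$ and admissible radii $1\le r\le R_{\max}$ at the common center, a step the paper's proof leaves implicit.
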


\begin{proof}
	{If $n\leq 2k-3$ and $n-k+2\leq j\leq k-1$, then by Lemma \ref{lengthbj}, we have
	\begin{align*}
B^{(k)}(n,j)=&\dfrac{|B_j|-1}{2}\\
=&|W^{(k)}_{j}|-|W^{(k)}_{n-k+1}|\\
= &2^{j}-2^{n-k+1}
\end{align*}
Since  $k \leq n \leq 2k-3$ and  $j <k$, the last equality holds.
Otherwise, by Lemma \ref{borderpal1}, $B^{(k)}(n,j)=0$, as desired.}
\end{proof}

\subsection{Straddling Palindromes of $W_n^{(k)}$}\label{secstrad}
In this section we are going to count the number of straddling palindromes of $W_n^{(k)}$.
 A straddling palindrome $S$ is called an {\it maximal straddling palindrome} of $W_n^{(k)}$ if there is no longer straddling palindrome factor of $W^{(k)}_{n}$ with the same center position.
Similarly, an $(A,B)$-maximal straddling palindrome, is a  $(A,B)$-straddling palindrome which is a maximal straddling palindrome.

\begin{lem}\label{straddling}
	If $S$ is a straddling palindrome of $W_n^{(k)}$, then $2k-1 \leq n \leq 3k-2$.
\end{lem}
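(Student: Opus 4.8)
The plan is to distil from the palindrome $S$ a single forbidden-looking two-letter factor of $W^{(k)}_n$, and then show that the mere presence of that factor forces $n$ into the stated interval.

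Write $S=AB$, where $A\neq\epsilon$ is a suffix of $Q:=\prod_{i=n-1}^{n-k+1}W^{(k)}_i$ and $B\neq\epsilon$ is a prefix of $R:=k\oplus W^{(k)}_{n-k}$ (so implicitly $n\ge k$, since straddling palindromes are defined only then). By Lemma \ref{lastdig} the last letter of $Q$ is $n-k+1$, and since $W^{(k)}_{n-k}$ begins with $0$, the first letter of $R$ is $k$; thus $S[|A|]=n-k+1$ and $S[|A|+1]=k$. Applying the palindrome identity $S[t]=S[|S|+1-t]$ at $t=|A|$ and $t=|A|+1$ gives $S[|B|+1]=n-k+1$ and $S[|B|]=k$, so $S[|B|]\,S[|B|+1]=k\,(n-k+1)$ is a factor of $S$, hence of $W^{(k)}_n$. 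It therefore suffices to prove: if the two-letter word $k\,(n-k+1)$ is a factor of $W^{(k)}_n$, then $2k-1\le n\le 3k-2$. Put $d:=n-k+1\ge 1$.

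The structural input is the block decomposition of $W^{(k)}_m=\varphi_k(W^{(k)}_{m-1})$ ($m\ge1$): its blocks are $\varphi_k(ki+j)=(ki)(ki+j+1)$ for $0\le j\le k-2$ and $\varphi_k(ki+k-1)=k(i+1)$. Consequently every letter of $W^{(k)}_m$ that is not a multiple of $k$ occurs only as the \emph{second} letter of a length-two block, and in particular $0$ is always immediately followed by a letter of $\{1,\dots,k-1\}$ (and, by Lemma \ref{00}, never by $0$). Now if $k\nmid d$, then each occurrence of the letter $d$ in $W^{(k)}_n$ is the second letter of $\varphi_k(d-1)=(k\lfloor d/k\rfloor)(d)$, hence is immediately preceded by $k\lfloor d/k\rfloor$; since $k\,d\prec W^{(k)}_n$ this forces $k\lfloor d/k\rfloor=k$, i.e. $\lfloor d/k\rfloor=1$, i.e. $k\le d\le 2k-1$, which is exactly $2k-1\le n\le 3k-2$.

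It remains to handle $k\mid d$, say $d=mk$ with $m\ge1$. I claim that for $m\ge2$ the word $k\,(mk)$ is a factor of no $W^{(k)}_j$; granting this, $k\,d\prec W^{(k)}_n$ forces $m=1$, i.e. $n=2k-1\in[2k-1,3k-2]$. Suppose $m\ge2$ and take $j$ minimal with $k\,(mk)\prec W^{(k)}_j$. As $mk$ occurs in $W^{(k)}_j$, Lemma \ref{lastdig} gives $j\ge mk\ge 2k>k$, so (\ref{structeq}) yields $W^{(k)}_j=\prod_{i=j-1}^{j-k+1}W^{(k)}_i\,(k\oplus W^{(k)}_{j-k})$. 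The occurrence of $k\,(mk)$ cannot straddle the junction of $W^{(k)}_i$ and $W^{(k)}_{i-1}$ (that factor is $i\,0$, and $mk\neq 0$), nor the junction of $W^{(k)}_{j-k+1}$ and $k\oplus W^{(k)}_{j-k}$ (that factor is $(j-k+1)\,k$, and $k\neq mk$); it cannot lie inside $k\oplus W^{(k)}_{j-k}$, for that would give $0\,((m-1)k)\prec W^{(k)}_{j-k}$ with $(m-1)k\ge k$, impossible by the previous paragraph; and if it lies inside some $W^{(k)}_i$ with $i\le j-1$, then $i\ge mk$ by Lemma \ref{lastdig}, contradicting minimality of $j$. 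So no such $j$ exists, proving the claim.

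Combining the two cases gives $2k-1\le n\le 3k-2$. The main obstacle is precisely the case $k\mid(n-k+1)$: there the clean ``predecessor of the letter $d$'' argument of the third paragraph collapses, since a multiple of $k$ can be preceded by many different letters, and one genuinely has to descend through the recursion (\ref{structeq}); some care is also needed with the boundary indices in that descent (ensuring $j\ge 2k$, so that the invoked lemmas apply and the relevant subwords have length at least two).
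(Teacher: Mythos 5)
Your proof is correct and follows the same route as the paper's: both extract the factor $k(n-k+1)$ from the palindrome by reflecting the junction factor $(n-k+1)k$, and then constrain $n-k+1$ using the block structure of $\varphi_k$. In fact your treatment is more careful than the paper's one-line assertion that ``the only letters that come after $k$ are from the set $\{k,k+1,\ldots,2k-1\}$'' (which, read literally, fails since $k$ can also be followed by $0$, e.g.\ $30\prec W^{(3)}$): your predecessor analysis in the case $k\nmid(n-k+1)$ and the minimal-counterexample descent ruling out $k(mk)$ for $m\ge 2$ supply exactly the justification the paper leaves implicit.
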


\begin{proof}
	{First we note that  $(k\oplus W^{(k)}_{n-k})$ starts with $k$ and  $W^{(k)}_{n-k+1}$ ends with $n-k+1$. Hence, the sequence $(n-k+1)k$ occurs in any straddling palindrome $P$ of  $W_n^{(k)}$. Since $P$ is palindrome, the sequence $k(n-k+1)$ also occurs in $P$. By definitions of $W_n^{(k)}$ and $\phi_k$
		the only letters that come after $k$ are from the set $\{k, k+1, k+2, \ldots, 2k-1\}$. Therefore, we have $(n-k+1)\in \{k, k+1, k+2, \ldots, 2k-1\}$, hence,
		$n\in \{2k-1, 2k, \ldots, 3k-2\}$, as desired.}
\end{proof}

\begin{lem}\label{n=2k-1}
	The only straddling palindrome in $W^{(k)}_{2k-1}$ is $kk$.
\end{lem}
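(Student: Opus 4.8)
The plan is to use the decomposition of $W^{(k)}_{2k-1}$ coming from equation (\ref{structeq}) and analyze the junction between the block $\prod_{i=2k-2}^{k}W^{(k)}_i$ and the final block $k\oplus W^{(k)}_{k-1}$. By Lemma \ref{straddling} the value $n=2k-1$ is in the admissible range, so at least one straddling palindrome may exist; our task is to show $kk$ is the only one. First I would record, via Theorem \ref{suffix} (or Lemma \ref{lastdig} together with Lemma \ref{struct}), that the suffix $A$ of $\prod_{i=2k-2}^{k}W^{(k)}_i$ ends in the digit $W^{(k)}_k$'s last letter, namely $k$, and that the prefix $B$ of $k\oplus W^{(k)}_{k-1}$ begins with $k$ (since $W^{(k)}_{k-1}$ begins with $0$, so $k\oplus W^{(k)}_{k-1}$ begins with $k$). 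Hence any straddling palindrome $S=AB$ has center word empty or has the two central letters equal to $k$; either way $kk$ is a straddling palindrome, establishing existence.

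For uniqueness, I would argue that any longer straddling palindrome would force a letter equal to $k$ to appear deeper inside one of the two blocks, which it cannot. Concretely, suppose $S=AB$ is a straddling palindrome with $|A|\geq 2$ or $|B|\geq 2$. Write the central occurrence of $kk$ and look one position further out: on the right side (inside $k\oplus W^{(k)}_{k-1}$) the letter after the leading $k$ is $k\oplus(\text{second letter of }W^{(k)}_{k-1})$; using Lemma \ref{struct2} or Lemma \ref{struct1} the second letter of $W^{(k)}_{k-1}$ is $1$, so this letter is $k+1$. By the palindrome condition the symmetric letter on the left, i.e.\ the letter of $W^{(k)}_k$ in position $|W^{(k)}_k|-1$ (second from the end), must also equal $k+1$. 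But by Lemma \ref{lastdig} every digit of $W^{(k)}_k$ other than the final $k$ is strictly less than $k$ — in fact $W^{(k)}_k=\prod_{i=k-1}^{1}W^{(k)}_i\,k$ by the computation in the proof of Lemma \ref{struct}, so its penultimate digit is the last digit of $W^{(k)}_1=01$, namely $1\neq k+1$. This contradiction shows $|B|=1$, and symmetrically (or by the same length bound on $A$) $|A|=1$, so $S=kk$.

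The main obstacle I anticipate is pinning down precisely which letters sit immediately around the central junction: one must be careful that $A$ is a \emph{suffix} of the concatenation $W^{(k)}_{2k-2}\cdots W^{(k)}_{k}$ and hence its last letter is genuinely the last letter of $W^{(k)}_k$ (this uses $|W^{(k)}_k|\geq 1$, trivially true) and that extending the palindrome by one step on each side really does land on the penultimate digit of $W^{(k)}_k$ on one side and the digit $k+1$ on the other. Once the local picture at the junction is secured — last two digits of $W^{(k)}_k$ are $1\,k$ and first two digits of $k\oplus W^{(k)}_{k-1}$ are $k\,(k+1)$ — the palindrome condition immediately fails for any extension, so the argument is short. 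A mild alternative, if one prefers to avoid the penultimate-digit bookkeeping, is to invoke Lemma \ref{lastdig} directly: the digit $k$ occurs exactly once in $k\oplus W^{(k)}_{k-1}$ (namely at the front, as the image of the unique $0$ starting $W^{(k)}_{k-1}$) and, among the blocks making up $A$, only the trailing $W^{(k)}_k$ contributes a $k$ and only at its end; so the two $k$'s forming the center are forced and nothing symmetric can be appended.
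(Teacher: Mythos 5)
Your existence argument and the local data you compute are correct (the suffix of $W^{(k)}_k$ is $1k$ by Theorem \ref{suffix}, the prefix of $k\oplus W^{(k)}_{k-1}$ is $k(k+1)$), and this part matches the paper. The gap is in the uniqueness step: you identify the letter at position $|A|+2$ of $S=AB$ (the $k+1$ just to the right of the junction) with its ``symmetric letter on the left,'' which you take to be the penultimate letter of $W^{(k)}_k$, sitting at position $|A|-1$. In a palindrome of length $|A|+|B|$ the mirror of position $|A|+2$ is position $|B|-1$, and this coincides with $|A|-1$ only when $|A|=|B|$. Nothing in the definition of a straddling palindrome forces its palindromic center to sit at the junction between $A$ and $B$: a priori one could have $|A|>|B|$ (center inside $A$) or $|A|<|B|$ (center inside $B$), and in those cases your comparison relates two positions that are not mirror images, so no contradiction is obtained. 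In particular the cases $|A|=1,\ |B|\geq 2$ and $|A|\geq 2,\ |B|=1$ are untouched. The paper argues asymmetrically instead: if $|A|\geq 2$ then $S$ contains the digit $1$ (from the suffix $1k$ of $W^{(k)}_k$) while every digit of $B\lhd k\oplus W^{(k)}_{k-1}$ is at least $k$, which forces $A=k$; and then $|B|\geq 2$ would force $kk\prec k\oplus W^{(k)}_{k-1}$, i.e.\ $00\prec W^{(k)}_{k-1}$, contradicting Lemma \ref{00}. You would need an argument of this shape (or an explicit proof that the center must lie at the junction) to close your proof.

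Your ``mild alternative'' is not salvageable as stated: the digit $k$ does \emph{not} occur exactly once in $k\oplus W^{(k)}_{k-1}$. Lemma \ref{lastdig} only says the largest digit $n$ occurs once in $W^{(k)}_n$; it says nothing about $0$, which occurs many times in $W^{(k)}_{k-1}$ (already $W^{(3)}_2=0102$ has two $0$'s, so $3\oplus W^{(3)}_2=3435$ has two $3$'s). Hence $k\oplus W^{(k)}_{k-1}$ contains many occurrences of $k$ and ``the two $k$'s forming the center are forced'' does not follow. The correct replacement for that step is exactly the paper's appeal to Lemma \ref{00}: $kk$ is not a factor of $k\oplus W^{(k)}_{k-1}$ because $00$ is not a factor of $W^{(k)}_{k-1}$.
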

\begin{proof}
	{By Lemma \ref{struct}, $W^{(k)}_{2k-1}=W^{(k)}_{2k-2}  \ldots  W^{(k)}_{k} (k \oplus W^{(k)}_{k-1})$. It is clear that $k\lhd (k \oplus W^{(k)}_{k-1})$ and using Theorem \ref{suffix}, $(1 k)\rhd W^{(k)}_{k}$ and hence $kk$ is a straddling palindrome of $W^{(k)}_{2k-1}$. Now, we are going to show that there is no other straddling palindrome in $W^{(k)}_{2k-1}$. 		
		Since there is no digit $1$ in $ (k \oplus W^{(k)}_{k-1})$, every straddling palindrome of  $W^{(k)}_{2k-1}$ has the last digit of  $W^{(k)}_{k}$ (i.e. $k$) as a prefix.
		By Lemma \ref{00}, $0 0$ is not a factor of $W^{(k)}_{k-1}$ and hence $kk$ is not a factor of $(k \oplus W^{(k)}_{k-1})$. Therefore, $W^{(k)}_{2k-1}$ could not have a straddling palindrome of length greater than $2$ and $kk$ is its only straddling palindrome.}
\end{proof}

\begin{lem}\label{strad1|A|}
Let $2k-1 < n < 3k-2$ and let $S=XY$ be an $(X,Y)$-straddling palindrome of $W^{(k)}_{n}$. Then $X\rhd (k \oplus W^{(k)}_{n-2k+1})$ and hence $|X|\leq 2^{n-2k+1}$.
\end{lem}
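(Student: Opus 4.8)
The plan is to pin down the local structure of $W^{(k)}_n$ around the cut between $\prod_{i=n-1}^{n-k+1}W^{(k)}_i$ and $k\oplus W^{(k)}_{n-k}$, and then to play off one ``forbidden'' two-letter factor against it. Throughout, write $L=\prod_{i=n-1}^{n-k+1}W^{(k)}_i$ and $V=k\oplus W^{(k)}_{n-k}$, so that $W^{(k)}_n=LV$ by Lemma~\ref{struct}; and set $P=k\oplus W^{(k)}_{n-2k+1}$ and $\gamma=n-2k+2$. Since $2k-1<n<3k-2$ we have $2\le\gamma\le k-1$.

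First I would record four facts. (a) Applying Lemma~\ref{struct} to $W^{(k)}_{n-k+1}$ --- legitimate because $n-k+1>k$ --- gives $W^{(k)}_{n-k+1}=\prod_{i=n-k}^{n-2k+2}W^{(k)}_i\cdot P$; as $W^{(k)}_{n-k+1}$ is the last block of $L$ and $W^{(k)}_{n-2k+2}$ ends with $\gamma$ (Lemma~\ref{lastdig}), the word $L$ ends with $\gamma P$. (b) Since $W^{(k)}_{m-1}\lhd W^{(k)}_m$ for every $m\ge1$ (Lemmas~\ref{struct1} and~\ref{struct}), iterating gives $W^{(k)}_{n-2k+1}\lhd W^{(k)}_{n-k}$, hence $P\lhd V$; moreover every letter of $V$ is $\ge k$. (c) $|P|=|W^{(k)}_{n-2k+1}|=2^{n-2k+1}$ by Corollary~\ref{correcsize}. (d) Because $2\le\gamma\le k-1$, neither $k\mid\gamma$ nor $k<\gamma$ holds, so Lemma~\ref{ab<wn} forbids $k\gamma$ as a factor of $W^{(k)}_n$, hence of $L$ (which is a prefix of $W^{(k)}_n$). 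Finally, $X$ is a suffix of $L$, so it ends with the last letter of $P$; therefore it suffices to prove $|X|\le|P|$, from which $X\rhd P$ and $|X|\le 2^{n-2k+1}$ follow at once.

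I would then assume for contradiction that $|X|>|P|$. By (a) the letter of $X$ at position $|X|-|P|$ is $\gamma<k$. If $|X|\le|Y|$, reflecting the palindrome $S=XY=Y^RX^R$ makes $X^R$ a suffix of $Y$, hence a factor of $V$, forcing every letter of $X$ to be $\ge k$ --- impossible. So $|X|>|Y|$, and the same reflection yields $X=Y^RZ$ with $Z=X[\,|Y|+1,\,|X|\,]$ a palindrome of length $|X|-|Y|\ge1$. Now split on $|Z|$ versus $|P|$. If $|Z|\le|P|$, then position $|X|-|P|=|Y|+|Z|-|P|$ lies inside the prefix $Y^R$ of $X$, so $\gamma$ equals a letter of $Y$, hence of $V$, hence $\ge k$ --- impossible. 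If $|Z|>|P|$, then $Z\rhd L$ forces the length-$|P|$ suffix of $Z$ to be $P$, so by palindromy its length-$|P|$ prefix is $P^R$; reading off position $|Z|-|P|$ of $Z$ (which carries $\gamma$, being the letter of $L$ immediately before $P$) and reflecting gives $Z[\,|P|+1\,]=\gamma$. For $|Z|=|P|+1$ this reads $n-k+1=P[\,|P|\,]=Z[1]=Z[\,|P|+1\,]=\gamma=n-2k+2$, i.e. $k=1$, impossible; for $|Z|\ge|P|+2$ the prefix $P^R\gamma$ of $Z$ contains the factor $P[1]\,\gamma=k\gamma$, contradicting (d). Every branch is contradictory, so $|X|\le|P|$.

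The routine ingredients are (a)--(c), which are bookkeeping with the decomposition lemmas, and (d), a single invocation of Lemma~\ref{ab<wn}. The step that I expect to need the most care is the case $|X|>|Y|$: one must extract the palindromic core $Z=X[\,|Y|+1,\,|X|\,]$ from $S=XY$ correctly and then keep precise track of which positions of $X$ --- inside $Y^R$, inside $Z$, or inside the $P^R$-prefix of $Z$ --- actually carry the small letter $\gamma$, since it is exactly the tension between ``$\gamma<k$'' and ``every letter of $V$, and every letter of $P$, is $\ge k$'' that produces each contradiction.
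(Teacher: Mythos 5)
Your proof is correct and follows essentially the same route as the paper's: both expose the suffix $\gamma P$ of the left block by a second application of Lemma \ref{struct} (with $\gamma=n-2k+2<k$ immediately preceding $P=k\oplus W^{(k)}_{n-2k+1}$), and both obtain the contradiction from the forbidden factor $k\gamma$ via Lemma \ref{ab<wn} combined with the fact that every letter to the right of the junction is at least $k$. The only difference is bookkeeping: the paper pins down the center of a maximal straddling palindrome using the digit $0$ supplied by Theorem \ref{suffix}, whereas you case-split on the palindromic core $Z$, which if anything treats the degenerate positions a little more carefully.
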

\begin{proof}
{Let $i=n-2k$ and let $W$ be an $(A,B)$-maximal straddling palindrome of $W^{(k)}_{2k+i}$. By (\ref{structeq}),
		\begin{align*}
		W^{(k)}_{2k+i}&=W^{(k)}_{2k+i-1}  \ldots  W^{(k)}_{k+i+1} (k \oplus W^{(k)}_{k+i}),\\
		W^{(k)}_{k+i+1}&=W^{(k)}_{k+i}  \ldots  W^{(k)}_{i+2} (k \oplus W^{(k)}_{i+1}).
		\end{align*}
		Hence,
\begin{align}
		W^{(k)}_{k+i+1} (k \oplus W^{(k)}_{k+i})&\rhd W^{(k)}_{2k+i}, \label{eqrhd2k+i}\\
		W^{(k)}_{i+2} (k \oplus W^{(k)}_{i+1})&\rhd W^{(k)}_{k+i+1}. \label{eqrhdk+i+1}
		\end{align}
Therefore,
\begin{equation}\label{suff3 2k+i}
		W^{(k)}_{i+2} (k \oplus W^{(k)}_{i+1})(k \oplus W^{(k)}_{k+i})\rhd W^{(k)}_{2k+i}.
		\end{equation}
Let $W^{(k)}_{2k+i}= V W^{(k)}_{i+2} (k \oplus W^{(k)}_{i+1})(k \oplus W^{(k)}_{k+i})$ and $l=|V W^{(k)}_{i+2}|$.
By definition of  $(A,B)$-maximal straddling palindrome of $W^{(k)}_{2k+i}$ and using (\ref{suff3 2k+i}), we conclude that
$A\rhd V W^{(k)}_{i+2} (k \oplus W^{(k)}_{i+1})$.
Let $A=  W^{(k)}_{2k+i} [j,j']$, where by definition of $A$, $j'=|W^{(k)}_{2k+i}|-|W^{(k)}_{k+i}|$. We claim that $j>l$; To the contrary, let $j\leq l$.
Let the digits $y$ and $z$ satisfy $yz ={\rm Suff}_2 W^{(k)}_{i+2}$. Since $1\leq i+1\leq k-2$, using Theorem \ref{suffix}, we have $yz=0(i+2)$.
Since $B\lhd k\oplus (W^{(k)}_{k+i})$, we have $0\not\in \mathcal{A}lph(B)$ and we conclude that
 $c_p(W, W^{(k)}_{2k+i})\leq l-2$.
On the other hand by (\ref{suff3 2k+i}), the subsequent letter of $z=i+2$ is $k$. Hence, $(i+2)k \prec W$ and $k (i+2) \prec W$, but this is impossible by Lemma \ref{ab<wn}.
Therefore, $A\rhd (k \oplus W^{(k)}_{i+1})$. Hence, $|X|\leq|A|\leq |W^{(k)}_{i+1}|=2^{i+1}$.
}\end{proof}

The next lemma is useful to give an upper bound for the size of the word $Y$ in any $(X,Y)$-straddling palindrome of $W^{(k)}_{n}$, $2k\leq n\leq 3k-2$.

\begin{lem}\label{palprefix}
Let $i\geq 0$ and $P$ be palindrome prefix of $W=(i+1)W^{(k)}_{k+i}$. Then the largest digit of $P$ is $i+1$. 	
\end{lem}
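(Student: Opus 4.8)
The plan is to argue by contradiction on the largest digit of $P$, using the following consequence of Lemma~\ref{lastdig}: in $W^{(k)}$ every digit $d$ makes its first appearance exactly at the last letter of the prefix $W^{(k)}_d$. Indeed, $W^{(k)}_d$ contains $d$ once (at its end) and no larger digit, while $W^{(k)}_{d-1}$ contains no $d$; and $W^{(k)}_d\lhd W^{(k)}_{d+1}\lhd\cdots\lhd W^{(k)}_{k+i}$ (immediate from Lemmas~\ref{struct1} and~\ref{struct}), so these prefixes nest correctly.

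First I would dispose of the case $|P|=1$, where $P=(i+1)$ and the claim is trivial. So assume $|P|\ge 2$, let $d$ be the largest digit of $P$, and suppose for contradiction that $d\ge i+2$. Since every digit of $W=(i+1)W^{(k)}_{k+i}$ is at most $k+i$ (Lemma~\ref{lastdig}), we have $d\le k+i$; and $d=k+i$ is impossible, because the only occurrence of $k+i$ in $W$ is its final letter, which would force $P=W$, contradicting that $P$ is a palindrome (as $W$ begins with $i+1$ and ends with $k+i\ne i+1$). Hence $d\le k+i-1$, so both $W^{(k)}_d$ and $W^{(k)}_{d+1}$ are prefixes of $W^{(k)}_{k+i}$.

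Next I would locate the first $d$ in $W$: since $W^{(k)}_d\lhd W^{(k)}_{k+i}$, it sits at position $|W^{(k)}_d|+1$ (it cannot be position $1$, which holds the letter $i+1\ne d$). As $P$ is a prefix of $W$ containing $d$ and is a palindrome of length $m:=|P|$, the mirror of this occurrence is another $d$ at position $m-|W^{(k)}_d|$; comparing it with the first occurrence gives $m\ge 2|W^{(k)}_d|+1$. Now Corollary~\ref{coreqsize} yields $|W^{(k)}_{d+1}|\le 2|W^{(k)}_d|$, so $1+|W^{(k)}_{d+1}|\le m$; together with $W^{(k)}_{d+1}\lhd W^{(k)}_{k+i}$ this shows that $(i+1)W^{(k)}_{d+1}$ is a prefix of $P$. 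But $W^{(k)}_{d+1}$ contains the digit $d+1$ (Lemma~\ref{lastdig}), so $d+1$ occurs in $P$, contradicting the maximality of $d$. Thus the largest digit of $P$ is at most $i+1$, and since $P$ starts with $i+1$ it equals $i+1$.

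The conceptual heart of the argument — and the only step that is not pure bookkeeping — is the observation that the palindrome condition forces $m\ge 2|W^{(k)}_d|+1$, a length already long enough to contain the next block $W^{(k)}_{d+1}$ and hence the forbidden digit $d+1$. The only technical point requiring attention is the boundary case $d=k+i$, which must be excluded at the outset so that $W^{(k)}_{d+1}$ is still a prefix of $W^{(k)}_{k+i}$.
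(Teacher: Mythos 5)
Your proof is correct and follows essentially the same route as the paper's: pin the first occurrence of the supposed large digit $d$ at the end of the prefix block $W^{(k)}_d$, use the palindrome mirror to force $|P|\ge 2|W^{(k)}_d|+1$, and contradict this via Corollary~\ref{coreqsize} (the paper states the contradiction as $2|W^{(k)}_\ell|+1\le|W^{(k)}_{\ell+1}|$, you as $P$ swallowing $W^{(k)}_{d+1}$ and hence the digit $d+1$ --- the same inequality). Your explicit exclusion of the boundary case $d=k+i$ is a point the paper glosses over with ``it is obvious that $\ell<i+k$,'' so that added care is welcome.
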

\begin{proof}
{
 Let $\ell$ be the largest digit of $P$ and for contrary suppose that
$\ell > i+1$.
It is obvious that $\ell < i+k$. Hence, $\ell+1 \leq i+k$ which yields $W^{(k)}_{\ell+1}\lhd W^{(k)}_{i+k}$.
We note that $\ell+1$, which is the last digit of  $W^{(k)}_{\ell+1}$, does not appear in $P$
Hence,
\begin{equation}\label{eq|P|}
|P|\leq |W^{(k)}_{\ell+1}|.
\end{equation}
 On the other hand
$W^{(k)}_{\ell}\lhd W^{(k)}_{\ell+1}$ and since $i+1<\ell$ and using Lemma \ref{lastdig}, we conclude that the first place that $\ell$ occurs is the last digit of $W^{(k)}_{\ell}$. In other words if we let $m=|W^{(k)}_{\ell}|+1$, then $W[m]=\ell$ and for integer $j< m$ we have $W[j]< \ell$.
 By our assumption $\ell \in \mathcal{A}lph(P)$, therefore $c_p(P, W)\geq m=|W^{(k)}_{\ell}|+1$. Thus $|P|\geq 2|W^{(k)}_{\ell}|+1$ and using (\ref{eq|P|}), we have $|P| \leq |W^{(k)}_{\ell+1}|$. Hence, $2|W^{(k)}_{\ell}|+1 \leq |W^{(k)}_{\ell+1}|$ but this contradicts with Corollary \ref{coreqsize}. Hence our assumption is not true and $\ell \leq i+1$, as desired.
}
\end{proof}

%

\begin{lem}\label{strad1|B|}
Let $2k-1 < n < 3k-2$ and let $S=XY$ be an $(X,Y)$-straddling palindrome of $W^{(k)}_{n}$. Then $Y \lhd (k\oplus W^{(k)}_{n-2k+2})(n-k+2)^{-1}$ and hence $|Y|\leq 2^{n-2k+2} -1$.
\end{lem}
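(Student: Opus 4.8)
The plan is to mirror the structure of Lemma~\ref{strad1|A|}, but now working with prefixes of the straddling palindrome and using the fact that the straddling palindrome, read backwards, must agree with its own forward reading. Write $n = 2k+i$ with $0 < i < k-2$, so that by (\ref{structeq}) the word $W^{(k)}_n$ ends with $(k\oplus W^{(k)}_{k+i})$, and this block is the concatenation $\prod_{t=k+i-1}^{i+1}(k\oplus W^{(k)}_t)\,(2k\oplus W^{(k)}_i)$ after one further application of (\ref{structeq}) to $W^{(k)}_{k+i}$. The key structural fact I would isolate first: since $S=XY$ is a straddling palindrome, $X$ is a nonempty suffix of $\prod_{t=n-1}^{n-k+1}W^{(k)}_t$ ending in the last digit $(n-k+1)$ of $W^{(k)}_{n-k+1}$, and $Y$ is a nonempty prefix of $k\oplus W^{(k)}_{n-k}=k\oplus W^{(k)}_{k+i}$ which begins with $k$. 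Thus $Y$ is a palindrome-prefix constraint on the word $k\oplus W^{(k)}_{k+i}$, and by applying $k\oplus(\cdot)$ to Lemma~\ref{palprefix} with its $i$ replaced by $i+1$ — i.e.\ noting $k\oplus W^{(k)}_{k+i}$ has the form $(\text{something})$ whose relevant prefix is governed by $(k+i+1)W^{(k)}_{\,2k+i}$ shifted — I get a bound on the largest digit appearing in $Y$.

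More concretely, the second step is to identify exactly where in $k\oplus W^{(k)}_{k+i}$ the digit structure forces $Y$ to stop. By Theorem~\ref{suffix} the letter just before the first $k$ in the straddling region is $n-k+1 = k+i+1$; since $S$ is a palindrome, the letter just after the last letter of $Y$ inside $X^R$-territory is also determined, but more usefully, $Y$ itself lies inside $k\oplus W^{(k)}_{k+i}$, and the first block of that word is $k\oplus W^{(k)}_{k+i-1} = k\oplus\big(W^{(k)}_{k+i-2}\cdots W^{(k)}_{i+1}(k\oplus W^{(k)}_i)\big)$, etc. Peeling (\ref{structeq}) we get $(k\oplus W^{(k)}_{i+2})(2k\oplus W^{(k)}_{i+1})\lhd k\oplus W^{(k)}_{k+i-1}\lhd k\oplus W^{(k)}_{k+i}$, so that $Y$ overlaps at most the prefix $(k\oplus W^{(k)}_{i+2})(2k\oplus W^{(k)}_{i+1})$. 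Now run the same contradiction as in Lemma~\ref{strad1|A|}: if $Y$ extended past $k\oplus W^{(k)}_{i+2}$ into $2k\oplus W^{(k)}_{i+1}$, then since ${\rm Suff}_2(W^{(k)}_{i+2})=0(i+2)$ (Theorem~\ref{suffix}, as $1\le i+1\le k-2$) we would have $k(i+2)$ and then $(i+2)(2k)$ appearing as factors of $S$ on the $X$-side via the palindrome symmetry; but $(i+2)$ followed by $2k$ with $k\nmid 2k$ false — here instead one gets that the digit $0$ cannot appear in $X$ (since $X\rhd \prod W^{(k)}_t$ ends appropriately and $0\notin\mathcal{A}lph$ of the shifted block that $Y$ sits in), forcing the center of $S$ to be at most $|k\oplus W^{(k)}_{i+2}|$ positions in, while the forced factor $k\,(i+2)\prec S$ together with $(i+2)\,k\prec S$ contradicts Lemma~\ref{ab<wn}. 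Hence $Y\lhd k\oplus W^{(k)}_{i+2}$, and since the last digit $i+2$ of $W^{(k)}_{i+2}$ appears only once and at the very end (Lemma~\ref{lastdig}) and cannot lie in a palindrome-prefix whose largest digit is bounded by Lemma~\ref{palprefix}, actually $Y\lhd (k\oplus W^{(k)}_{i+2})(k+i+2)^{-1}$, i.e.\ $Y\lhd (k\oplus W^{(k)}_{n-2k+2})(n-k+2)^{-1}$, giving $|Y|\le |W^{(k)}_{i+2}|-1 = 2^{i+2}-1 = 2^{n-2k+2}-1$ by Corollary~\ref{correcsize}.

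I expect the main obstacle to be the bookkeeping in the middle step: correctly tracking which shifted block $j\oplus W^{(k)}_t$ each letter of $Y$ lands in after iterating (\ref{structeq}), and making the digit-$0$ / Lemma~\ref{ab<wn} contradiction airtight when the palindrome symmetry is used to transport the factor $k(i+2)$ from the $Y$-side to the $X$-side. In particular one must check that the center position $c_p(S,W^{(k)}_n)$ really does fall inside the $X\cdot(\text{first part of }Y)$ window so that both $k(i+2)$ and its reverse genuinely occur in $S$; this is where the constraint $2k-1<n<3k-2$ (equivalently $0<i<k-2$) is used, exactly as the hypothesis $1\le i+1\le k-2$ was used in Lemma~\ref{strad1|A|}. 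The length bound $|Y|\le 2^{n-2k+2}-1$ then drops out of Corollary~\ref{correcsize} once the prefix containment is established, and the parallel with Lemma~\ref{strad1|A|} should make the write-up short.
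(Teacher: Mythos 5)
Your overall strategy is the right one (it is essentially the paper's: control $X$ via Lemma \ref{strad1|A|}, then show the digit $n-k+2=k+i+2$ cannot occur in $Y$, using Lemma \ref{palprefix}), but the mechanism you propose for the decisive step does not work, for two concrete reasons. First, the structural claim $(k\oplus W^{(k)}_{i+2})(2k\oplus W^{(k)}_{i+1})\lhd k\oplus W^{(k)}_{k+i}$ is false. Since every $W^{(k)}_m$ begins with $W^{(k)}_{m-1}$, the prefix of $W^{(k)}_{k+i}$ of length $|W^{(k)}_{i+3}|$ is $W^{(k)}_{i+3}=W^{(k)}_{i+2}W^{(k)}_{i+1}\cdots$ (valid because $i+3\leq k$ in the range of the lemma), so the block immediately following $k\oplus W^{(k)}_{i+2}$ is $k\oplus W^{(k)}_{i+1}$, not $2k\oplus W^{(k)}_{i+1}$; the doubly shifted block $2k\oplus W^{(k)}_{i}$ sits only at the far right end of $k\oplus W^{(k)}_{k+i}$. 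Second, even with the correct continuation, the Lemma \ref{ab<wn} contradiction evaporates on the $Y$ side: the relevant pairs are $(k+i+2)\,k$ and its mirror $k\,(k+i+2)$, and both satisfy the conclusion of Lemma \ref{ab<wn} (for the first, $k\mid k$; for the second, $k<k+i+2$). The argument of Lemma \ref{strad1|A|} worked precisely because there the offending reversed pair was $k\,(i+2)$ with $k>i+2$ and $k\nmid(i+2)$; that asymmetry is destroyed by the $k\oplus$ shift, so it cannot be transplanted.

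What is actually needed, and what you flag as "the main obstacle" without supplying, is the following: by Lemma \ref{strad1|A|} and Lemma \ref{lastdig}, $X\rhd k\oplus W^{(k)}_{i+1}$, so $k+i+2\notin\mathcal{A}lph(X)$. Hence if $k+i+2$ occurred in $Y$ (necessarily first at position $|W^{(k)}_{i+2}|$ of $k\oplus W^{(k)}_{k+i}$, by Lemma \ref{lastdig}), its mirror image about the center of $S$ would also have to lie in $Y$, which forces the center of $S$ strictly past $|X|$; the central factor of $S$ starting at the last letter $k+i+1$ of $X$ is then a genuine palindrome, it is a prefix of $(k+i+1)(k\oplus W^{(k)}_{k+i})=k\oplus\bigl((i+1)W^{(k)}_{k+i}\bigr)$, and it contains the digit $k+i+2$ --- contradicting Lemma \ref{palprefix}. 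Your proposal instead applies Lemma \ref{palprefix} directly to "bound the largest digit appearing in $Y$", but $Y$ is an arbitrary prefix of $k\oplus W^{(k)}_{k+i}$, not a palindrome, so the lemma says nothing about it until this intermediate palindrome is constructed. Once $k+i+2\notin\mathcal{A}lph(Y)$ is established, your concluding step ($Y\lhd(k\oplus W^{(k)}_{i+2})(k+i+2)^{-1}$ and $|Y|\leq 2^{i+2}-1$) is fine. A minor additional point: the case $n=2k$ (i.e.\ $i=0$) is included in the lemma's hypothesis but excluded by your normalization $0<i<k-2$.
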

\begin{proof}
{Let $i=n-2k$ and $W$ be an $(A,B)$-maximal straddling palindrome of $W^{(k)}_{2k+i}$. Using (\ref{structeq}) we have
 \begin{equation}
	W^{(k)}_{2k+i}=W^{(k)}_{2k+i-1}  \ldots  W^{(k)}_{k+i+1} (k \oplus W^{(k)}_{k+i}).
	\end{equation}
 So we have
 \begin{equation}\label{eq1}
 B\lhd (k\oplus W^{(k)}_{k+i}).
\end{equation}
 By Lemma \ref{strad1|A|},
 \begin{equation}\label{eq2}
 A\rhd (k\oplus W^{(k)}_{i+1}).
 \end{equation}
  and by Lemma \ref{lastdig}, $k+i+2\not\in \mathcal{A}lph(A)$.
 We claim that $k+i+2\not\in \mathcal{A}lph(B)$. For contrary suppose that $k+i+2\in \mathcal{A}lph(B)$.
 We note that $(k\oplus W^{(k)}_{i+2})\lhd (k\oplus W^{(k)}_{k+i})$.
 Now, let $m=| W^{(k)}_{i+2}|$, then by Lemma \ref{lastdig}, $B[m]=k+i+2$ and for all $m'<m$, $B[m']<k+i+2$. On the other hand by Lemma \ref{strad1|A|}, $k+i+2\not\in \mathcal{A}lph(A)$, hence $c_p(W,W)>|A|$.
 Therefore, $(k+i+1)B$
 contains a palindrome $P$ with $k+i+2\in \mathcal{A}lph(P)$, which is impossible by Lemma \ref{palprefix}. Hence,
 $k+i+2\not\in \mathcal{A}lph(B)$, which implies that $B \lhd (k\oplus W^{(k)}_{i+2}(i+2)^{-1})$ and $|B|\leq  2^{i+2} -1=2^{n-2k+2} -1$.}

\end{proof}
Now we are ready to prove the following lemma.
\begin{lem}\label{2k-1<n<3k-2}
	Let $2k-1 < n < 3k-2$. Then $W^{(k)}_{n}$  has exactly two maximal straddling palindromes, one of which is the $(k\oplus W^{(k)}_{n-2k+1}, k\oplus (W^{(k)}_{n-2k+1}(n-2k+1)^{-1}))$-straddling palindrome and the other the $(k\oplus (W^{(k)}_{n-2k+1}),k\oplus (W^{(k)}_{n-2k+1} W^{(k)}_{n-2k+1}(n-2k+1)^{-1}))$-straddling palindrome.
\end{lem}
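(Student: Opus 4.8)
The plan is to read off all straddling palindromes from the local structure of $W^{(k)}_n$ around the single cut produced by Lemma \ref{struct}. Put $r=n-2k+1$; the hypothesis $2k-1<n<3k-2$ says exactly that $1\le r\le k-2$. Write $V=W^{(k)}_r(r)^{-1}$, so $W^{(k)}_r$ is $V$ followed by the single letter $r$, and set $\bar V=k\oplus V$ and $a=k+r$. By Lemma \ref{lastdig} the letter $r$ is the largest in $W^{(k)}_r$, hence every letter of $V$ is at most $r-1$ and every letter of $\bar V$ is at most $k+r-1<a$; and by Lemma \ref{wn<k} (or trivially if $r=1$) the word $V$, and therefore $\bar V$, is a palindrome. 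The first step is to pin down $W^{(k)}_n$ near the cut between the left factor $L=\prod_{i=n-1}^{n-k+1}W^{(k)}_i$ and the right factor $R=k\oplus W^{(k)}_{n-k}$: applying Lemma \ref{struct} to $W^{(k)}_n$ and then to its last block $W^{(k)}_{n-k+1}=W^{(k)}_{k+r}$ shows that $L$ ends with $k\oplus W^{(k)}_r=\bar V a$, while iterating Lemmas \ref{struct1} and \ref{struct} shows the words $W^{(k)}_m$ form a chain under the prefix order, so $W^{(k)}_{r+1}\lhd W^{(k)}_{n-k}$; since $W^{(k)}_{r+1}(r+1)^{-1}=W^{(k)}_rW^{(k)}_r(r)^{-1}=VrV$ by Lemma \ref{struct2}, the factor $R$ begins with $k\oplus(VrV)=\bar V a\bar V$. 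The point I will use twice is that $a$ occurs in $\bar V a$ only as its last letter, and in $\bar V a\bar V$ only in the central position, because all letters of $\bar V$ are smaller than $a$.

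Let $b=|L|$, so positions $1,\dots,b$ of $W^{(k)}_n$ form $L$ and the rest form $R$. The second step is to check that the two words in the statement really are straddling palindromes. Set $P_1=(k\oplus W^{(k)}_r)(k\oplus(W^{(k)}_r(r)^{-1}))=\bar V a\bar V$ and $P_2=(k\oplus W^{(k)}_r)(k\oplus(W^{(k)}_rW^{(k)}_r(r)^{-1}))=\bar V a\bar V a\bar V$; both are palindromes because $\bar V$ is. In each, the part in $L$ is $k\oplus W^{(k)}_r$, a suffix of $L$, and the part in $R$ (namely $\bar V$ for $P_1$ and $\bar V a\bar V$ for $P_2$) is a prefix of $\bar V a\bar V$, hence of $R$; so $P_1$ and $P_2$ are straddling palindromes of $W^{(k)}_n$. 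Counting lengths, $|P_1|=2^{r+1}-1$ with centre at position $b$, and $|P_2|=3\cdot 2^r-1$ with centre at position $b+2^{r-1}$; in particular $P_1\ne P_2$.

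The main step is to prove that every straddling palindrome $P=XY$ of $W^{(k)}_n$ (with $X\ne\epsilon$ a suffix of $L$ and $Y\ne\epsilon$ a prefix of $R$) has its centre at position $b$ or at position $b+2^{r-1}$. By Lemma \ref{strad1|A|}, $X$ is a suffix of $\bar V a$, so $s:=|X|\le 2^r$; by Lemma \ref{strad1|B|}, $Y$ is a prefix of $\bar V a\bar V$, so $t:=|Y|\le 2^{r+1}-1$; and the centre of $P$ is at position $b+\frac{t-s+1}{2}$. Since $P[s]=X[s]=a$ and $P[s+1]=Y[1]=k$ (the first letter of $R$), the identity $P[i]=P[s+t+1-i]$ forces $P[t+1]=a$ and $P[t]=k$, so in particular $t\ne s$. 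If $t<s$, then $P[t+1]=X[t+1]=a$, which forces $t+1=s$ because $a$ occurs in $X$ only in its last position, so the centre is $b$; if $t>s$, then $P[t+1]=Y[t+1-s]=a$, which forces $t+1-s=2^r$ because $a$ occurs in $Y$ only at position $2^r$, so the centre is $b+2^{r-1}$. Finally, if the centre of $P$ is $b$ then $|P|=2s-1\le 2^{r+1}-1=|P_1|$, and, two palindromes sharing a centre being nested, $P$ is a factor of $P_1$; if the centre is $b+2^{r-1}$ then $|P|=2s+2^r-1\le 3\cdot 2^r-1=|P_2|$, so $P$ is a factor of $P_2$. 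Hence every straddling palindrome of $W^{(k)}_n$ is a factor of $P_1$ or of $P_2$; as $P_1$ and $P_2$ are themselves distinct straddling palindromes, they are precisely the two maximal straddling palindromes of $W^{(k)}_n$. The one genuinely delicate point is making the boundary picture of the first step precise enough that the two statements ``$a$ occurs only here'' are beyond doubt; after that the palindrome analysis is the two-case argument just given.
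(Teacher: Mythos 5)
Your proof is correct and follows essentially the same route as the paper's: both confine any straddling palindrome to the window $k\oplus(W^{(k)}_{n-2k+1}W^{(k)}_{n-2k+1}W^{(k)}_{n-2k+1}(n-2k+1)^{-1})=\bar V a\bar V a\bar V$ via Lemmas \ref{strad1|A|} and \ref{strad1|B|}, pin the center to one of two positions using the occurrences of the letter $a=n-k+1$, and exhibit the two maximal palindromes via Lemmas \ref{struct2} and \ref{wn<k}. The only difference is cosmetic: you locate the center by reflecting the boundary pair $(a,k)$, where the paper argues directly from the two occurrences of $a$ in the window; your version is, if anything, slightly more explicit.
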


\begin{proof}
	{
Let $S=AB$ be an $(A,B)$-maximal straddling palindrome of $W^{(k)}_{n}$.
Let $W= k\oplus (W^{(k)}_{i+1}  W^{(k)}_{i+2}(i+2)^{-1})$. Then by Lemmas \ref{strad1|A|} and \ref{strad1|B|}, we conclude that $S \prec W$.
			Moreover, by (\ref{structeq}) we have
		\begin{align*}
		W &= k\oplus (W^{(k)}_{i+1}  W^{(k)}_{i+2}(i+2)^{-1}) \\
		&= k\oplus (W^{(k)}_{i+1} W^{(k)}_{i+1} W^{(k)}_{i} \ldots  W^{(k)}_{0} )\\
		&= k\oplus (W^{(k)}_{i+1} W^{(k)}_{i+1} W^{(k)}_{i+1} (i+1)^{-1})\\
		&= k\oplus (W^{(k)}_{i+1}(i+1)^{-1} (i+1) W^{(k)}_{i+1}(i+1)^{-1} (i+1) W^{(k)}_{i+1} (i+1)^{-1})
		\end{align*}
		%
By the last equation and using Lemma \ref{lastdig}, we conclude that $|W|_{k+i+1}=2$. Let $c_1$ and $c_2$ be two integers with $c_1<c_2\leq |W|$
such that $W[c_1]=W[c_2]=k+i+1$.
		Since $S$ is a straddling palindrome of $W^{(k)}_{2k+i}$, the first occurrence of the digit $k+i+1$ in $W$, lies in $S$. Therefore,
either $c_p(S,W)=c_1$ or $c_p(S,W)=\frac{c_1+c_2}{2}$.
In other words, the center position of $S$ is either the corresponding position of first occurrence of $k+i+1$, or is the position of the middle digit between the two occurrences of $k+i+1$ in $W$.
 So, we have the following two cases
		\begin{itemize}
			\item $S$ has only one digit $k+i+1$: in this case we show that $A=k\oplus W^{(k)}_{i+1}$ and $B=k\oplus (W^{(k)}_{i+1}(i+1)^{-1})$. By Lemma \ref{strad1|A|}, it suffices to show that  $k\oplus( W^{(k)}_{i+1}   W^{(k)}_{i+1}(i+1)^{-1})$ is palindrome and this true by Lemma \ref{wn<k}.
			\item $S$ has exactly two digits $k+i+1$: in this case we show that
			$A=k\oplus W^{(k)}_{i+1}$ and
			$B=k\oplus (W^{(k)}_{i+1} W^{(k)}_{i+1}(i+1)^{-1})$. By Lemma \ref{wn<k},  $k\oplus (W^{(k)}_{i+1}   W^{(k)}_{i+1} W^{(k)}_{i+1}(i+1)^{-1})$ is palindrome and using  Lemmas \ref{strad1|A|} and \ref{strad1|B|}, $S=AB$ is a maximal straddling palindrome, as desired.
		\end{itemize}}
\end{proof}
\begin{lem}\label{strad2|A|}
Let $ n = 3k-2$ and let $S=XY$ be an $(X,Y)$-straddling palindrome of $W^{(k)}_{n}$. Then $X\rhd (k \oplus W^{(k)}_{k-1})$ and hence $|X|\leq 2^{k-1}$.
\end{lem}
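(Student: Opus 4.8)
The plan is to follow the proof of Lemma~\ref{strad1|A|} with the parameter $i=n-2k$ now equal to $k-2$, the value at which the estimates there become tight. First I would use Lemma~\ref{struct} to write $W^{(k)}_{3k-2}=W^{(k)}_{3k-3}\cdots W^{(k)}_{2k-1}(k\oplus W^{(k)}_{2k-2})$ and $W^{(k)}_{2k-1}=W^{(k)}_{2k-2}\cdots W^{(k)}_{k}(k\oplus W^{(k)}_{k-1})$; together these show that $W^{(k)}_{k}(k\oplus W^{(k)}_{k-1})$ is a suffix of $\prod_{t=3k-3}^{2k-1}W^{(k)}_t$. Since $X$ is a suffix of $\prod_{t=3k-3}^{2k-1}W^{(k)}_t$ and $|W^{(k)}_{k-1}|=2^{k-1}$ by Corollary~\ref{correcsize}, proving the lemma amounts to ruling out $|X|\geq 2^{k-1}+1$, i.e.\ the situation in which $X$ reaches into the trailing copy of $W^{(k)}_{k}$.

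I would then argue by cases on $|X|$. If $|X|\geq 2^{k-1}+2$, then $X$ contains the last two digits of $W^{(k)}_{k}$, which by Theorem~\ref{suffix} (taken at $n=k$, so $j=0$) are $(1)(k)$; hence $1k\prec X\prec S$, and since $S$ is a palindrome also $k1\prec S\prec W^{(k)}_{3k-2}$, contradicting Lemma~\ref{ab<wn} because $k\nmid 1$ and $k\geq 3>1$. If $|X|=2^{k-1}+1$, then $X=k\,(k\oplus W^{(k)}_{k-1})$; as $W^{(k)}_{k-1}$ begins with $0$, the word $S$ begins with $kk$, so, being a palindrome, it ends with $kk$. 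When $|Y|\geq 2$ this forces $kk\prec Y\prec k\oplus W^{(k)}_{2k-2}$, hence $00\prec W^{(k)}_{2k-2}$, impossible by Lemma~\ref{00}; and when $|Y|=1$ we have $Y=k$, so the last two digits of $S$ are $(2k-1)k\neq kk$, again impossible. Both cases being excluded, $X$ is a suffix of $k\oplus W^{(k)}_{k-1}$ and $|X|\leq 2^{k-1}$.

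The step I expect to be the main obstacle is exactly this boundary behaviour. In Lemma~\ref{strad1|A|} the forbidden factor produced is $(i+2)k$ with $2\leq i+2\leq k-1$, so Lemma~\ref{ab<wn} applies at once; here $i+2=k$ and the factor that appears is $kk$, which is \emph{not} forbidden (indeed $W^{(k)}_{2k-1}$, a prefix of $W^{(k)}_{3k-2}$, already contains $kk$ by Lemma~\ref{n=2k-1}), so one is forced to bring in the digit $1$ immediately preceding the final $k$ of $W^{(k)}_{k}$ and, for the extremal length $|X|=2^{k-1}+1$, the absence of the factor $00$ from $W^{(k)}_{2k-2}$. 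Alternatively, one may carry out the $V,\ell,j,j'$ bookkeeping of Lemma~\ref{strad1|A|} verbatim for a maximal straddling palindrome $W=AB$ and dispose of the single remaining configuration $j=\ell$ (which forces $A=k(k\oplus W^{(k)}_{k-1})$) by the same observation about $kk$ and $00$.
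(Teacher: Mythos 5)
Your proof is correct and follows essentially the same route as the paper's: the same double application of Lemma \ref{struct} to expose the suffix $W^{(k)}_{k}\,(k\oplus W^{(k)}_{k-1})\,(k\oplus W^{(k)}_{2k-2})$, the same use of $1k\rhd W^{(k)}_{k}$ together with Lemma \ref{ab<wn} to cap $|X|$ at $2^{k-1}+1$, and the same $kk$-versus-$00$ argument via Lemma \ref{00} to exclude the boundary length. Your explicit handling of the $|Y|=1$ subcase is a minor refinement of detail, not a different method.
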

\begin{proof}
{Let $W$ be a $(A,B)$-maximal straddling palindrome of $W^{(k)}_{3k-2}$. By Lemma \ref{struct}, we have
		\begin{align*}
		W^{(k)}_{3k-2}&=W^{(k)}_{3k-3}  \ldots  W^{(k)}_{2k-1} (k \oplus W^{(k)}_{2k-2}),\\
		W^{(k)}_{2k-1}&=W^{(k)}_{2k-2}  \ldots  W^{(k)}_{k} (k \oplus W^{(k)}_{k-1}).
		\end{align*}
	 Hence,
	$W^{(k)}_{k} (k \oplus W^{(k)}_{k-1})(k \oplus W^{(k)}_{2k-2})\rhd W^{(k)}_{3k-2}$.
	Since $1 k\rhd W^{(k)}_{k}$ and
	by Lemma \ref{ab<wn}, $k1\not\prec W^{(k)}_{3k-2}$, we conclude that $|A|\leq |k (k\oplus W^{(k)}_{k-1})|=2^{k-1}+1$.
	Moreover, $k k \rhd k (k\oplus W^{(k)}_{k-1})$, but $k k$ could not happen in $k\oplus (W^{(k)}_{k-1} W^{(k)}_{2k-2})$, because otherwise $0 0 \prec (W^{(k)}_{k-1} W^{(k)}_{2k-2})$ which is impossible by Lemma \ref{00}. Therefore,
	 $A\rhd (k\oplus W^{(k)}_{k-1}) $ and $|X|\leq|A|\leq 2^{k-1}$.
}
\end{proof}

\begin{lem}\label{strad2|B|}
Let $n =3k-2$ and let $S=XY$ be an $(X,Y)$-straddling palindrome of $W^{(k)}_{n}$. Then $B \lhd k\oplus (W^{(k)}_{k}k^{-1})$, hence, $|Y|\leq 2^k -1$.
\end{lem}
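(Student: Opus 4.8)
The plan is to mirror the argument used for bounding $|X|$ in Lemma~\ref{strad2|A|}, but now descending on the side of the prefix word $k\oplus W^{(k)}_{2k-2}$ rather than on the suffix side. First I would let $W$ be an $(A,B)$-maximal straddling palindrome of $W^{(k)}_{3k-2}$, so that $Y\lhd B$ and, by the defining property of straddling palindromes, $B\lhd k\oplus W^{(k)}_{2k-2}$. Applying equation~(\ref{structeq}) to $W^{(k)}_{2k-2}$ gives
\begin{equation*}
W^{(k)}_{2k-2}=W^{(k)}_{2k-3}\ldots W^{(k)}_{k}\,(k\oplus W^{(k)}_{k-1}),
\end{equation*}
so $k\oplus W^{(k)}_{2k-2}$ begins with $k\oplus W^{(k)}_{k}$. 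Hence it suffices to show that $B$ cannot extend past the copy of $k\oplus W^{(k)}_{k}$, more precisely past its penultimate position, i.e.\ that $B\lhd k\oplus(W^{(k)}_{k}k^{-1})$; the length bound $|Y|\le|B|\le|W^{(k)}_{k}|-1=2^{k}-1$ then follows from Corollary~\ref{correcsize}, since $|W^{(k)}_{k}|=2|W^{(k)}_{k-1}|=2^{k}$.

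The key step is to obstruct $B$ at the boundary using the same forbidden-factor mechanism as in Lemmas~\ref{n=2k-1} and~\ref{strad2|A|}. By Theorem~\ref{suffix}, $W^{(k)}_{k}$ ends with $1\,k$ (since $k\equiv 0\pmod k$, the relevant suffix is $(k-k+1)k=1k$), so $k\oplus W^{(k)}_{k}$ ends with $(k+1)(2k)$, wait—rather, the digit $1$ of $W^{(k)}_k$ becomes $k+1$ and the final $k$ becomes $2k$; the point is that the word following $k\oplus W^{(k)}_{k}$ inside $k\oplus W^{(k)}_{2k-2}$ is $k\oplus W^{(k)}_{k+1}$, which starts with $k$. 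So if $B$ were to extend one digit beyond $k\oplus W^{(k)}_{k}$ it would contain the factor $(2k)(k)$, and since $B$ (hence $S=AB$) is a palindrome it would also contain $(k)(2k)$. Subtracting $k$ from every letter, $W^{(k)}_{k-1}W^{(k)}_{k}$—or the ambient word $W^{(k)}_{k-1}$—would contain both $k\,0$ and $0\,k$; but by Lemma~\ref{ab<wn}, a factor $ab\prec W^{(k)}_{n}$ with $k\nmid b$ forces $a<b$, which rules out $k\,0$ having its role reversed. I would phrase this exactly as in Lemma~\ref{strad2|A|}: the suffix $(2k)(k)$ of the candidate for $B$ cannot recur reversed because $(k)(2k)\not\prec W^{(k)}_{3k-2}$ by Lemma~\ref{ab<wn}, so $B$ cannot reach that far.

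There is one more position to kill, namely $B=k\oplus W^{(k)}_{k}$ itself (ending exactly at the last digit $2k$ of the block). Here I would invoke Lemma~\ref{lastdig}: the digit $2k=k\oplus k$ occurs exactly once in $k\oplus W^{(k)}_{2k-2}$—indeed it is the largest digit of $W^{(k)}_{k}$ and, by Lemma~\ref{00}-type reasoning together with $k>2$, it does not reappear among $W^{(k)}_{k+1},\ldots$ in the relevant range up to where the next fresh digit appears. Since $S$ is a straddling palindrome its center lies strictly inside the block boundary, and $A$ already contains no digit exceeding $k$ by Lemma~\ref{strad2|A|} (combined with $A\rhd k\oplus W^{(k)}_{k-1}$), so a digit $2k$ appearing at the very end of $B$ would have no matching partner on the $A$-side, contradicting that $W$ is a palindrome. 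This forces $B\lhd k\oplus(W^{(k)}_{k}k^{-1})$, giving $|Y|\le 2^{k}-1$. The main obstacle I anticipate is getting the boundary bookkeeping exactly right: one must be careful that $k\oplus W^{(k)}_{k}$ is genuinely a prefix of $k\oplus W^{(k)}_{2k-2}$ (which needs $2k-2\ge k$, i.e.\ $k\ge 2$, fine) and that the single-digit overhang argument is applied to the correct occurrence; but these are the same manipulations already carried out in Lemmas~\ref{strad1|B|} and~\ref{strad2|A|}, so no genuinely new idea should be required.
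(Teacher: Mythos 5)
Your proposal follows the paper's setup (reduce to an $(A,B)$-maximal straddling palindrome, identify $k\oplus W^{(k)}_{k}$ as the relevant prefix of $k\oplus W^{(k)}_{2k-2}$, and show $B$ cannot reach the digit $2k$ sitting at position $|W^{(k)}_{k}|$), but your mechanism for excluding the overhang is different from the paper's and, as written, it fails. The paper shows $2k\notin\mathcal{A}lph(B)$ by locating the center of $S$ inside $B$ and then applying Lemma \ref{palprefix} to the palindrome prefix of $(2k-1)B\lhd k\oplus\bigl((k-1)W^{(k)}_{2k-2}\bigr)$, whose largest digit must be $2k-1$. You instead argue via a forbidden two-letter factor: if $B$ overshoots, then $(2k)(k)\prec S$, hence $(k)(2k)\prec S$ by palindromicity, and you claim $(k)(2k)\not\prec W^{(k)}_{3k-2}$ ``by Lemma \ref{ab<wn}.'' That lemma does not apply here: its conclusion is ``either $k\mid b$ or $a<b$,'' so it constrains a factor $ab$ only when $k\nmid b$; for $ab=(k)(2k)$ you have $b=2k$ and $k\mid 2k$, so the lemma is vacuous and rules out nothing. (Contrast this with its legitimate use in Lemma \ref{strad2|A|}, where the excluded factor is $k\,1$ and $k\nmid 1$.) The underlying fact you want --- essentially that $0$ is never followed by a digit $\geq k$ in $W^{(k)}$, so $(k)(2k)$ cannot occur inside $k\oplus W^{(k)}_{2k-2}$ --- is plausibly true, but it would need its own induction in the style of Lemma \ref{00}; it is not available from the paper's stated lemmas, so this is a genuine gap rather than a rewording of the paper's argument.

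Two smaller points. First, your expansion $W^{(k)}_{2k-2}=W^{(k)}_{2k-3}\ldots W^{(k)}_{k}(k\oplus W^{(k)}_{k-1})$ is not what equation (\ref{structeq}) gives (the product runs down to $W^{(k)}_{k-1}$ and the last block is $k\oplus W^{(k)}_{k-2}$); this does not hurt you because $W^{(k)}_{k}$ is still a prefix, followed by the digit $k$, but it should be stated correctly. Second, $|W^{(k)}_{k}|=2|W^{(k)}_{k-1}|-1=2^{k}-1$ by Corollary \ref{correcsize} (the case $i=k-1$), not $2^{k}$; your final bound $|Y|\leq 2^{k}-1$ still holds, but for the wrong reason. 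Also, the digit $2k$ is not unique in all of $k\oplus W^{(k)}_{2k-2}$ --- only in its prefix $k\oplus W^{(k)}_{k}$ --- though your endpoint argument (the last letter of $S$ would have to equal its first, which lies in $k\oplus W^{(k)}_{k-1}$ and is at most $2k-1$) is fine for the single boundary case it treats.
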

\begin{proof}
	{Let $W$ be a $(A,B)$-maximal straddling palindrome of $W^{(k)}_{3k-2}$.
		Using (\ref{structeq}) we have	
	\begin{align*}
W^{(k)}_{3k-2}&=W^{(k)}_{3k-3}  \ldots  W^{(k)}_{2k-1} (k \oplus W^{(k)}_{2k-2}),\\
W^{(k)}_{2k-1}&=W^{(k)}_{2k-2}  \ldots  W^{(k)}_{k} (k \oplus W^{(k)}_{k-1}).
\end{align*}
So we have
\begin{equation}\label{eq1}
B\lhd (k\oplus W^{(k)}_{2k-2}).
\end{equation}
By Lemma \ref{strad1|A|},
\begin{equation}\label{eq2}
A\rhd (k\oplus W^{(k)}_{k-1}).
\end{equation}
and by Lemma \ref{lastdig}, $2k\not\in \mathcal{A}lph(A)$.
We claim that $2k\not\in \mathcal{A}lph(B)$ as well.
For contrary suppose that $2k\in \mathcal{A}lph(B)$.
We note that $(k\oplus W^{(k)}_{k})\lhd (k\oplus W^{(k)}_{2k-2})$.
Now, let $m=| W^{(k)}_{k}|$, then by Lemma \ref{lastdig}, $B[m]=2k$ and for all $m'<m$, $B[m']<2k$.
On the other hand by Lemma \ref{strad2|A|}, $2k\not\in \mathcal{A}lph(A)$, hence  $c_p(W,W)>|A|$.
 Therefore, $(2k-1)B$
contains a palindrome $P$ with $2k\in \mathcal{A}lph(P)$, which is impossible by Lemma \ref{palprefix}. Hence,  $2k\not\in \mathcal{A}lph(B)$, which implies that $B \lhd (k\oplus W^{(k)}_{k}(k)^{-1})$ and $|B|\leq  2^{k} -1$.
}
\end{proof}

\begin{lem}\label{n=3k-2}
	Let $n=3k-2$. Then  $W^{(k)}_{n}$ has exactly two maximal straddling palindromes which are respectively $(k\oplus W^{(k)}_{k-1}, k\oplus (W^{(k)}_{k-1}(k-1)^{-1}))$-straddling palindrome and
	$(k\oplus (0^{-1}W^{(k)}_{k-1}), k\oplus (W^{(k)}_{k-1} W^{(k)}_{k-1}( 0 (k-1))^{-1}))$-straddling palindrome in $W^{(k)}_{n}$.
\end{lem}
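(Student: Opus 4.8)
The plan is to follow the template of the proof of Lemma~\ref{2k-1<n<3k-2}, adapting it to the boundary value $n=3k-2$, where the word $W^{(k)}_k$ is governed by Lemma~\ref{struct} rather than by Lemma~\ref{struct1}. I would fix an $(A,B)$-maximal straddling palindrome $S=AB$ of $W^{(k)}_{3k-2}$; by Lemmas~\ref{strad2|A|} and~\ref{strad2|B|} we have $A\rhd k\oplus W^{(k)}_{k-1}$ and $B\lhd k\oplus(W^{(k)}_{k}k^{-1})$, so $S$ is a factor of
\[
W:=(k\oplus W^{(k)}_{k-1})\,\big(k\oplus(W^{(k)}_{k}k^{-1})\big)=k\oplus\big(W^{(k)}_{k-1}\,W^{(k)}_{k}k^{-1}\big),
\]
occurring so that the junction between its $A$-part and its $B$-part sits at position $N:=|W^{(k)}_{k-1}|=2^{k-1}$ of $W$. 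Using Lemma~\ref{struct} one also checks that $W$ itself is a factor of $W^{(k)}_{3k-2}$ positioned at that junction, which will be needed for the converse.

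Next I would unravel $W$: applying Lemma~\ref{struct} to $W^{(k)}_k$ and Lemma~\ref{struct1} to $W^{(k)}_{k-1}$ gives $W^{(k)}_k k^{-1}=W^{(k)}_{k-1}\,W^{(k)}_{k-1}(0(k-1))^{-1}$, hence
\[
W=k\oplus\big(W^{(k)}_{k-1}\,W^{(k)}_{k-1}\,W^{(k)}_{k-1}(0(k-1))^{-1}\big),
\]
a word of length $3N-2$ by Corollary~\ref{correcsize} (which gives $|W^{(k)}_k|=2N-1$). By Lemma~\ref{lastdig} the digit $2k-1$ occurs in $W$ exactly twice, at positions $N$ and $2N$, and every straddling palindrome contains position $N$ because the last letter of $A\rhd k\oplus W^{(k)}_{k-1}$ is $2k-1$. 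Hence $S$ either contains only the first of these two occurrences, forcing $c_p(S,W)=N$, or contains both, forcing $c_p(S,W)=\frac{3N}{2}$; so there are at most two maximal straddling palindromes.

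In the case $c_p(S,W)=N$: $A$ is a suffix of $k\oplus W^{(k)}_{k-1}$ ending at the center, so $|A|\le N$ (Lemma~\ref{strad2|A|}), and maximality forces $A=k\oplus W^{(k)}_{k-1}$; since $P:=W^{(k)}_{k-1}(k-1)^{-1}$ is a palindrome (Lemma~\ref{wn<k}), $B$ is the reflection of $A$ beyond the center, i.e. $B=k\oplus(W^{(k)}_{k-1}(k-1)^{-1})$, and $S=k\oplus(P(k-1)P)$ is a palindrome — the first one claimed. In the case $c_p(S,W)=\frac{3N}{2}$: the center position forces $|B|=|A|+N-1$, which together with $|A|\le N$ and $|B|\le|W^{(k)}_k k^{-1}|=2N-2$ yields $|A|\le N-1$; maximality forces $|A|=N-1$, so $A=k\oplus(0^{-1}W^{(k)}_{k-1})$ and $B$ is the whole block $k\oplus(W^{(k)}_{k-1}W^{(k)}_{k-1}(0(k-1))^{-1})$; writing $P=0Q0$ with $Q$ a palindrome, $S=k\oplus(Q0(k-1)0Q0(k-1)0Q)$ is a palindrome — the second one claimed. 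Conversely, both these words are straddling factors of $W^{(k)}_{3k-2}$ (one is a prefix of $W$, the other is $W$ with its first letter deleted) and each is maximal at its center by the same length bounds, so $W^{(k)}_{3k-2}$ has exactly the two claimed maximal straddling palindromes.

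The main obstacle is the case $c_p(S,W)=\frac{3N}{2}$. In the interior case $2k-1<n<3k-2$ of Lemma~\ref{2k-1<n<3k-2} the analogous auxiliary word has length $3\cdot2^{i+1}-1$ and is itself a palindrome, so there the longer choice $|A|=2^{i+1}$ is admissible; here the factor $W^{(k)}_k$ produced by Lemma~\ref{struct} is one letter shorter than the $W^{(k)}_{i+2}$ that Lemma~\ref{struct1} would produce, so $|W|=3N-2$ and the candidate with $|A|=N$ does not fit. Pinning down that maximality forces exactly $|A|=N-1$, and then checking that the resulting word really is a palindrome — for which the decomposition $W^{(k)}_{k-1}(k-1)^{-1}=0Q0$ with $Q$ palindromic (Lemma~\ref{wn<k}) is the right tool — is the delicate step.
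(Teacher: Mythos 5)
Your proposal is correct and follows essentially the same route as the paper's proof: bound $A$ and $B$ via Lemmas \ref{strad2|A|} and \ref{strad2|B|}, embed $S$ in the word $k\oplus\bigl(W^{(k)}_{k-1}W^{(k)}_{k-1}W^{(k)}_{k-1}(0(k-1))^{-1}\bigr)$, use the two occurrences of $2k-1$ to restrict the center to two positions, and verify palindromicity of the two candidates via Lemma \ref{wn<k}. Your treatment of the second case (the explicit relation $|B|=|A|+N-1$ and the $0Q0$ decomposition) just spells out details the paper leaves implicit.
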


\begin{proof}
	{In perivous lemma we show that
	$B \lhd (k\oplus (W^{(k)}_{k}k^{-1}))$, so we have
	\begin{align*}
	B &\lhd (k\oplus (W^{(k)}_{k}k^{-1}))\\
	&=k\oplus (W^{(k)}_{k-1}W^{(k)}_{k-2}\ldots W^{(k)}_{1}kk^{-1})\\
	&=k\oplus (W^{(k)}_{k-1}W^{(k)}_{k-2}\ldots W^{(k)}_{1})\\
	&=k\oplus (W^{(k)}_{k-1}W^{(k)}_{k-1}( 0(k-1))^{-1})\\
	\end{align*}	
			Therefore for every $(A,B)$-straddling palindrome of $W^{(k)}_{3k-2}$, $S=A B  \prec W$, where
	\begin{align}
W &= k\oplus (W^{(k)}_{k-1}  W^{(k)}_{k-1}  W^{(k)}_{k-1}( 0(k-1))^{-1} \\
&= k\oplus (W^{(k)}_{k-1}(k-1)^{-1} (k-1)  W^{(k)}_{k-1} (k-1)^{-1} (k-1)  W^{(k)}_{k-1}(0  (k-1))^{-1}). \label{AB}
	\end{align}
By Lemma \ref{lastdig} and (\ref{AB}) the digit $k-1$ occurs twice in $W$. Let $c_1$ and $c_2$ be two integers with $c_1<c_2\leq |W|$ such that $W[c_1]=W[c_2]=k-1$.
Since $S=A B$ is a straddling palindrome of $W^{(k)}_{3k-2}$, $S$ should contain the first occurrence of $k-1$ in $W$. Therefore, either
we have $c_p(S,W)=c_1$ or $c_p(S,W)=\frac{c_1+c_2}{2}$
In other words the center position of $S$ is either the position of the first occurrence of $k-1$ in $W$ or exactly the position of the digit in the middle of the two occurrences of $k-1$ in $W$.
	So, we have the following two cases
	\begin{itemize}
		\item $|S|_{k-1}=1$ : In this case we show that $A=k\oplus W^{(k)}_{k-1}$ and $B=k\oplus (W^{(k)}_{k-1}(k-1)^{-1})$. By Lemma \ref{strad2|A|}, it suffices to show that  $k\oplus( W^{(k)}_{k-1}   W^{(k)}_{k-1}(k-1)^{-1})$ is palindrome and this is true by Lemma \ref{wn<k}.
		\item $|S|_{k-1}=2$ : In this case we show that $A=k\oplus (0^{-1}W^{(k)}_{k-1})$ and
		 $B=k\oplus (W^{(k)}_{k-1} W^{(k)}_{k-1}(0  (k-1))^{-1})$.By Lemma \ref{wn<k}, $k\oplus (0^{-1}W^{(k)}_{k-1}   W^{(k)}_{k-1} W^{(k)}_{k-1}(0  (k-1))^{-1})$ is palindrome and using Lemmas \ref{strad2|A|} and \ref{strad2|B|}, $S=AB$ is a maximal straddling palindrom as desired.
	\end{itemize}}
\end{proof}

\begin{thm}\label{strathm}
Let $S^{(k)}(n)$ be the number of straddling palindromes of  $W_n^{(k)}$. Then
	\begin{equation*}
S^{(k)}(n)= \left\{
\begin{array}{ll}
2^{n-2k+2}-1 & \text{if } 2k-1 \leq n < 3k-2, \\
2^{k}-2 & \text{if }  n = 3k-2, \\
0& \text{otherwise } .
\end{array} \right.
\end{equation*}
\end{thm}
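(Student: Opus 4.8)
The plan is to split on the value of $n$, invoking Lemmas~\ref{straddling}, \ref{n=2k-1}, \ref{2k-1<n<3k-2} and \ref{n=3k-2}, which between them identify all \emph{maximal} straddling palindromes of $W^{(k)}_n$, and then, for each maximal straddling palindrome, to count how many shorter straddling palindromes share its center. The key reduction is that every straddling palindrome of $W^{(k)}_n$ is a symmetric sub-factor (a factor obtained by deleting equally many letters from both ends, keeping the center fixed) of one of the maximal straddling palindromes, since the longest straddling palindrome with a given center position is maximal by definition. Conversely, a symmetric sub-factor of a palindrome is again a palindrome, and a sub-factor of a factor of $W^{(k)}_n$ is again a factor of $W^{(k)}_n$, so such a sub-factor is a straddling palindrome exactly when it still meets the boundary between $\prod_{i=n-1}^{n-k+1}W^{(k)}_i$ and $k\oplus W^{(k)}_{n-k}$ on both sides, i.e.\ when its parts before and after that boundary are both nonempty. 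Writing a maximal straddling palindrome as $AB$ with $\epsilon\neq A$ a suffix of $\prod_{i=n-1}^{n-k+1}W^{(k)}_i$ and $\epsilon\neq B$ a prefix of $k\oplus W^{(k)}_{n-k}$, and noting that the position of the center of $AB$ relative to the boundary is determined by $|A|$ and $|B|$, a direct computation shows that exactly $\min(|A|,|B|)$ straddling palindromes share the center of $AB$, and that they have pairwise distinct lengths.

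Then I would go through the four ranges. If $n<2k-1$ or $n>3k-2$, Lemma~\ref{straddling} leaves no straddling palindrome, so $S^{(k)}(n)=0$. If $n=2k-1$, Lemma~\ref{n=2k-1} gives $kk$ as the only one, so $S^{(k)}(n)=1=2^{n-2k+2}-1$. If $2k-1<n<3k-2$, set $i=n-2k$, so $0\le i\le k-3$ and hence $|W^{(k)}_{i+1}|=2^{i+1}$ by iterating Corollary~\ref{correcsize}; by Lemma~\ref{2k-1<n<3k-2} the two maximal straddling palindromes are $AB$ with $A=k\oplus W^{(k)}_{i+1}$ and $B=k\oplus(W^{(k)}_{i+1}(i+1)^{-1})$, and $A'B'$ with $A'=k\oplus W^{(k)}_{i+1}$ and $B'=k\oplus(W^{(k)}_{i+1}W^{(k)}_{i+1}(i+1)^{-1})$, so $(|A|,|B|)=(2^{i+1},2^{i+1}-1)$ and $(|A'|,|B'|)=(2^{i+1},2^{i+2}-1)$, and the two families contribute $2^{i+1}-1$ and $2^{i+1}$. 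If $n=3k-2$, Lemma~\ref{n=3k-2} gives two maximal straddling palindromes with $(|A|,|B|)=(2^{k-1},2^{k-1}-1)$ and $(|A'|,|B'|)=(2^{k-1}-1,2^{k}-2)$, each family contributing $2^{k-1}-1$.

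Finally I would check that in the last two cases the two families are disjoint, so that nothing is double-counted; it suffices that the two maximal straddling palindromes have distinct center positions, equivalently distinct center digits. Using Lemma~\ref{struct2} to write $W^{(k)}_{i+1}=W^{(k)}_iW^{(k)}_i(i)^{-1}(i+1)$ (resp.\ $W^{(k)}_{k-1}=W^{(k)}_{k-2}W^{(k)}_{k-2}(k-2)^{-1}(k-1)$) and Lemma~\ref{lastdig} to read off last digits, one locates each center and finds the center digits to be $k+i+1$ and $k+i$ when $2k-1<n<3k-2$, and $2k-1$ and $2k-2$ when $n=3k-2$. Adding the contributions then yields $S^{(k)}(n)=(2^{i+1}-1)+2^{i+1}=2^{i+2}-1=2^{n-2k+2}-1$ for $2k-1<n<3k-2$, and $S^{(k)}(n)=(2^{k-1}-1)+(2^{k-1}-1)=2^{k}-2$ for $n=3k-2$, together with $S^{(k)}(2k-1)=1=2^{n-2k+2}-1$ and $S^{(k)}(n)=0$ otherwise, which is the asserted formula. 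I expect the main difficulty to be the bookkeeping in the two bounded ranges: locating each maximal straddling palindrome's center with respect to the boundary (which is what pins down the admissible radii and hence the $\min(|A|,|B|)$ count) and verifying the disjointness of the two families; beyond that the argument is just arithmetic with powers of two.
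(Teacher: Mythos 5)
Your proposal is correct and follows essentially the same route as the paper: both reduce to the maximal straddling palindromes supplied by Lemmas~\ref{straddling}, \ref{n=2k-1}, \ref{2k-1<n<3k-2} and \ref{n=3k-2}, and then count the same-center sub-palindromes that still meet the boundary, arriving at the contributions $2^{n-2k+1}-1$ and $2^{n-2k+1}$ (resp.\ $2^{k-1}-1$ twice). Your uniform $\min(|A|,|B|)$ count and the explicit check that the two centers are distinct are just cleaner packagings of what the paper does case by case (and your description of the second maximal straddling palindrome in the range $2k\le n\le 3k-3$ agrees with the statement of Lemma~\ref{2k-1<n<3k-2}, which is the correct version).
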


\begin{proof}
{If $n=2k-1$, then by Lemma \ref{n=2k-1}, the only straddling palindrome of $W_n^{(k)}$ is $k k$. Hence, $S^{(k)}(2k-1)=1$.
	
	If $2k \leq n \leq 3k-3$, then by Lemma  \ref{2k-1<n<3k-2}, the only maximal straddling palindromes of  $W_n^{(k)}$ are $S_1=k\oplus W^{(k)}_{i+1}   k\oplus (W^{(k)}_{i+1}(i+1)^{-1})$ and
	$S_2=k\oplus (0^{-1}W^{(k)}_{i+1})   k\oplus (W^{(k)}_{i+1}   W^{(k)}_{i+1}(i+1)^{-1})$, where $i=n-2k$.
	According to the proof of Lemma \ref{2k-1<n<3k-2}, the center of $S_1$ is $k+i+1$ and $c_p(S_1,S_1)=|W^{(k)}_{i+1}|$.
Therefore, the number of straddling palindromes with the same center position as $S_1$, is $|W^{(k)}_{i+1}|-1=2^{i+1}-1=2^{n-2k+1}-1$. Now, again by the proof of Lemma \ref{2k-1<n<3k-2}, $c_p(S_1,S_1)>|W^{(k)}_{i+1}|$.
Hence, the number of straddling
palindromes with the same center position as $S_2$, is $|W^{(k)}_{i+1}|=2^{i+1}=2^{n-2k+1}$.
	Therefore, for $2k \leq n \leq 3k-3 $, $S^{(k)}(n)=2^{n-2k+1}+2^{n-2k+1}-1=2^{n-2k+2}-1$.

	If $n=3k-2$, then by Lemma \ref{n=3k-2}, the only maximal straddling palindromes of  $W_n^{(k)}$ are $S_1=k\oplus W^{(k)}_{k-1}   k\oplus (W^{(k)}_{k-1}(k-1)^{-1})$ and
	$S_2=k\oplus (0^{-1}W^{(k)}_{k-1})   k\oplus (W^{(k)}_{k-1} W^{(k)}_{k-1}(0  (k-1))^{-1})$.
	According to the proof of Lemma \ref{n=3k-2},
$c_p(S-1,S-1)=| W^{(k)}_{k-1}|$.
 Therefore, the number of straddling palindromes with the same center position as $S_1$ is $|W^{(k)}_{k-1}|-1=2^{k-1}-1$. Now, again by the proof of Lemma \ref{n=3k-2},
 $c_p(S_2,S_2)\geq | W^{(k)}_{k-1}|$.
  Hence, the number of straddling palindrome
 with the same center position as $S_2$, is $|0^{-1}W^{(k)}_{k-1}|=2^{k-1}-1$.
 Therefore, $S^{(k)}(3k-2)=2^{k}-2$.
	
	Finally, in the cases $n<2k-1$ of $n>3k-2$ by Lemma \ref{straddling}, we have
	$S^{(k)}(n)=0$, as desired. }
\end{proof}

\subsection{Proof of Theorem \ref{recppthm}}\label{secproofthmpal}

\begin{description}
	\item[(i)] This part is true according to the Theorem \ref{initialthm}.
	\item[(ii)]To prove this part by the equation \ref{recpbs} we have
	\begin{equation}
	P^{(k)}(n)= \Sigma_{i=n-k}^{n-1} P^{(k)}(i)+ \Sigma_{j=n-k+2}^{n-1} B^{(k)}(n,j)+S^{(k)}(n).
	\end{equation}
	Therefore,
		\begin{equation}
	P^{(k)}(n)= \Sigma_{i=n-k}^{n-1} P^{(k)}(i)+\alpha^{(k)}(n),
	\end{equation}
	Where $\alpha^{(k)}(n)= \Sigma_{j=n-k+2}^{n-1} B^{(k)}(n,j)+S^{(k)}(n).$
	By Theorem \ref{strathm}, when $k\leq n\leq 2k-3$, $S^{(k)}(n)=0$. Therefore, $	\alpha^{(k)}(n)= \Sigma_{j=n-k+2}^{n-1} B^{(k)}(n,j)$, when $k\leq n\leq 2k-3$. Now, by Lemma \ref{bordernumber} we have
	\begin{align*}
	\alpha^{(k)}(n)=& \Sigma_{j=n-k+2}^{n-1} B^{(k)}(n,j)\\
	=& \Sigma_{j=n-k+2}^{k-1} B^{(k)}(n,j)\,\,\, {\text{By Lemma \ref{borderpal1}}}\\
	=& \Sigma_{j=n-k+2}^{k-1} (2^j-2^{n-k+1})\\
	=& 2^k+(k-3) 2^{n-k+2}-n2^{n-k+1}.
	\end{align*}
	If $2k-1\leq n\leq 3k-2$, then by Lemma \ref{bordernumber} and Equation \ref{recpbs}, $\alpha^{(k)}(n)=S^{(k)}(n)$. Hence using Theorem \ref{strathm}, we have
	\begin{equation*}
	\alpha^{(k)}(n)= \left\{
	\begin{array}{ll}
	2^{n-2k+2}-1& \text{if} \,\,\,\, 2k-1 \leq n \leq 3k-3,\\
	2^{k}-2& \text{if} \,\,\,\, n=3k-2.
	\end{array} \right.
	\end{equation*}
	Finally, using Theorem \ref{strathm} and Lemma \ref{bordernumber},
	$\alpha^{(k)}(n)=0$, if either $n=2k-2$ or $n>3k-2$.\hfill{$\Box$}
\end{description}

\subsection{Examples}
In the following example in the case $k=4$ for some different values of $n$ we give all of the maximal straddling palindromes and the maximal bordering palindromes of $W^{(k)}_n$, if there exists any.
\begin{example}
Let $k=4$. Then by Lemmas \ref{centerpal1} and \ref{borderpal1},  the word $W^{(4)}_4$ has some bordering palindromes with center $3$ and $2$ respectively, the largest one of these palindromes are shown in (\ref{W443},\ref{W442}). We also notice that by Theorem \ref{strathm}, $W^{(4)}_4$ has no straddling palindrome.
		\begin{align}\label{W443}	
	W^{(4)}_4=&0{\color{red}{102010}}{\color{blue}{3}}{\color{red}{.0102.01}}.4\\
	W^{(4)}_4=&01020103.0{\color{red}{10}}{\color{blue}{2}}{\color{red}{.01}}.4.\label{W442}	
	\end{align}
In the case $n=5= 2k-3$,  by Lemmas \ref{centerpal1} and \ref{borderpal1},  the word $W^{(4)}_5$ has some bordering palindromes with center $3$, the largest one of these palindromes is shown in (\ref{W45}). We also notice that by Theorem \ref{strathm}, $W^{(4)}_5$ has no straddling palindrome.
\begin{equation}\label{W45}
	W^{(4)}_5=010201030102014.01{\color{red}{02010}}{\color{blue}{3}}{\color{red}{.0102}}.45
	\end{equation}
In the case $n=6$, by Theorem \ref{strathm} and Lemma \ref{bordernumber}, $W^{(4)}_6$ has neither straddling palindrome nor bordering palindrome.
	$$W^{(4)}_6=01020103010201401020103010245.010201030102014.01020103.4546$$
\end{example}
\begin{example}	
	Let $k=4$. again by the same reasoning as the previous example the word $W^{(4)}_7$ has one straddling palindrome and no bordering palindrome. Also $W^{(4)}_9$ has two maximal straddling palindromes which are colored below and no bordering palindrome.
\begin{align*}
	W^{(4)}_7= &01020103010201401020103010245010201030102014010201034546.\\
	&01020103010201401020103010245.01020103010201{\color{red}{4.4}}5464547
	\end{align*}
\begin{align*}
	W^{(4)}_9=&01020103010201401020103010245010201030102014010201034546010201030102014\\
	&01020103010245010201030102014454645470102010301020140102010301024501020\\
	&103010201401020103454601020103010201401020103010245454645474546458.0102\\
	&01030102014010201030102450102010301020140102010345460102010301020140102\\
	&010301024501020103010201445464547.0102010301020140102010301024501020103\\
	&010201401020103{\color{blue}{454}}{\color{red}{6}}{\color{blue}{.454}}64547454645845464547454689
		\end{align*}
	\begin{align*}
	W^{(4)}_9=&01020103010201401020103010245010201030102014010201034546010201030102014\\
	&01020103010245010201030102014454645470102010301020140102010301024501020\\
	&103010201401020103454601020103010201401020103010245454645474546458.0102\\
	&01030102014010201030102450102010301020140102010345460102010301020140102\\
	&010301024501020103010201445464547.0102010301020140102010301024501020103\\
	&010201401020103{\color{blue}{4546.4}}{\color{red}{5}}{\color{blue}{46454}}7454645845464547454689.
	\end{align*}
\end{example}

\section{Palindrome Structure}
In the previous section we counted the total number of palindromes in $W^{(k)}_n$ and we gave a recurrence formula for computing this number. In this section firstly we give the structure of all palindrome factors of $W^{(k)}$. Then we compute the palindrome complexity of $W^{(k)}$.

The following lemma indicate that to computing $\mathcal{P}al(W^{(k)})$ it suffices to compute $\mathcal{P}al(W^{(k)}_n)$ for some finite numbers of $n$.

\begin{lem}
For all integer $k>2$, the following relation holds:
$$\mathcal{P}al(W^{(k)})=ki\oplus({\displaystyle\bigcup_{j=1}^{3k-2}}\mathcal{P}al(W^{(k)}_j)).$$
\end{lem}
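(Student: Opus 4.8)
The plan is to prove the two inclusions separately, reading the right--hand side as $\bigcup_{i\ge 0}\, ki\oplus\big(\bigcup_{j=1}^{3k-2}\mathcal{P}al(W^{(k)}_j)\big)$. Two elementary observations drive the argument. First, since $\varphi_k(0)=01$ begins with $0$, the words $W^{(k)}_n=\varphi_k^n(0)$ form a nested chain of prefixes of $W^{(k)}$, so every finite factor of $W^{(k)}$ already occurs in some $W^{(k)}_n$; in particular $\mathcal{P}al(W^{(k)})=\bigcup_{n\ge 1}\mathcal{P}al(W^{(k)}_n)$. Second, adding a fixed constant to every letter of a word preserves being a palindrome, so $ki\oplus P$ is a palindrome whenever $P$ is.

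For the inclusion $\supseteq$: fix $i\ge 0$, $1\le j\le 3k-2$ and $Q\in\mathcal{P}al(W^{(k)}_j)$. By Lemma \ref{lastdig} the digit $ki$ occurs in $W^{(k)}_{\max(ki,1)}$, hence in $W^{(k)}$; writing $W^{(k)}=U\,(ki)\,U'$ and applying the morphism $\varphi_k^j$ to this fixed point of $\varphi_k$, the word $\varphi_k^j(ki)$ is a factor of $W^{(k)}$. By Theorem \ref{growth}, applied with exponent $j$ and letter $0$, we have $\varphi_k^j(ki)=\varphi_k^j(0)\oplus(ki)=ki\oplus W^{(k)}_j$, so $ki\oplus Q\prec ki\oplus W^{(k)}_j\prec W^{(k)}$; and $ki\oplus Q$ is a palindrome. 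Thus $ki\oplus Q\in\mathcal{P}al(W^{(k)})$.

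For the inclusion $\subseteq$, I will prove by strong induction on $n$ that $\mathcal{P}al(W^{(k)}_n)\subseteq\bigcup_{i\ge 0} ki\oplus\big(\bigcup_{j=1}^{3k-2}\mathcal{P}al(W^{(k)}_j)\big)$; together with the first observation this gives $\mathcal{P}al(W^{(k)})\subseteq$ the right--hand side. For $n\le 3k-2$ the inclusion is immediate (take $i=0$, $j=n$). For $n>3k-2$, take $P\in\mathcal{P}al(W^{(k)}_n)$ and use the block decomposition $W^{(k)}_n=\prod_{i=n-1}^{n-k+1}W^{(k)}_i\,(k\oplus W^{(k)}_{n-k})$ from (\ref{structeq}): an occurrence of the palindrome $P$ in $W^{(k)}_n$ is of Type 1, 2, or 3. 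Since $n>3k-2>2k-3$, Lemma \ref{borderpal1} rules out Type 2, and Lemma \ref{straddling} rules out Type 3, so $P$ is of Type 1. If $P$ is a factor of some $W^{(k)}_i$ with $n-k+1\le i\le n-1$ (so $1\le i<n$ since $i\ge n-k+1\ge 2k\ge1$), the induction hypothesis applies to $\mathcal{P}al(W^{(k)}_i)$ and finishes the case. If $P$ is a factor of $k\oplus W^{(k)}_{n-k}$, write $P=k\oplus P'$ with $P'\in\mathcal{P}al(W^{(k)}_{n-k})$ (here $n-k\ge 2k-1\ge 1$); by the induction hypothesis $P'=ki'\oplus Q$ for some $i'\ge 0$ and $Q\in\mathcal{P}al(W^{(k)}_j)$ with $1\le j\le 3k-2$, whence $P=k(i'+1)\oplus Q$ has the required form.

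The only nonroutine ingredient is the case distinction for $n>3k-2$, and it is already available from the paper: the exhaustiveness of the Type 1/2/3 trichotomy follows from the decomposition (\ref{structeq}) --- a factor either sits inside one block or straddles a boundary, and in the latter case it is a bordering or a straddling palindrome according to whether it reaches the last block --- while the disappearance of bordering and straddling palindromes for $n>3k-2$ is exactly Lemmas \ref{borderpal1} and \ref{straddling}. So the main obstacle is purely organizational: threading the $\oplus$-shifts through the recursion and keeping the index ranges consistent so that the induction hypothesis always applies.
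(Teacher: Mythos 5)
Your proof is correct and follows essentially the same route as the paper: for $n>3k-2$ the absence of bordering and straddling palindromes (the paper cites the counting results Lemma \ref{bordernumber} and Theorem \ref{strathm}, you cite the underlying Lemmas \ref{borderpal1} and \ref{straddling}) collapses $\mathcal{P}al(W^{(k)}_n)$ onto the palindrome sets of the blocks of (\ref{structeq}), and the recursion threads the $k\oplus$ shifts exactly as in the paper's proof. The one genuine addition is your explicit verification of the inclusion $\supseteq$ via Theorem \ref{growth} (showing $ki\oplus W^{(k)}_j=\varphi_k^j(ki)$ actually occurs in $W^{(k)}$), a step the paper dismisses with ``it is easy to see''; this is a worthwhile supplement rather than a different approach.
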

\begin{proof}
{Using Lemma \ref{bordernumber} and Theorem \ref{strathm}, for $n> 3k-2$, the word $W^{(k)}_n$ has neither straddling palindrome nor bordering palindrome. Hence, by equation (\ref{structeq}), when $n>3k-2$,
$\mathcal{P}al(W^{(k)}_n)=\displaystyle{\bigcup_{j=n-k+1}^{k-1}}\mathcal{P}al(W^{(k)}_j)\bigcup k\oplus(\mathcal{P}al(W^{(k)}_{n-k}))$. Therefore it is easy to see that $\mathcal{P}al(W^{(k)})=ki\oplus({\displaystyle\bigcup_{j=1}^{3k-2}}\mathcal{P}al(W^{(k)}_j))$.}
\end{proof}

To present the next results we need the following definition.

\begin{definition}\label{defP}
Let $k$ be an integer greater than two, then we define the following set of words:
\begin{align*}
P^{(k)}_1:=\{& ki\oplus (W_n^{(k)}n^{-1}): 2\leq n \leq k-1,\,\,\, i\geq 0\},\\
P^{(k)}_2:=\{& ki\oplus ((W^{(k)}_{j-1}W^{(k)}_{j-2}\ldots W^{(k)}_{n-k+1})^R j(W^{(k)}_{j-1}W^{(k)}_{j-2}\ldots W^{(k)}_{n-k+1})):\\
		& k\leq n\leq 2k-3,\,\,\, n-k+2\leq j\leq k-1,\,\,\, i\geq 0\},\\
P^{(k)}_3:= \{& ki\oplus (W^{(k)}_{n-2k+1} W^{(k)}_{n-2k+1}(n-2k+1)^{-1}), ki\oplus (W^{(k)}_{n-2k+1} W^{(k)}_{n-2k+1}  W^{(k)}_{n-2k+1}(n-2k+1)^{-1}): \\
& 2k \leq n \leq 3k-3, \,\,\, i\geq 1\},\\
  P^{(k)}_4:=\{&ki\oplus (W^{(k)}_{k-1}  W^{(k)}_{k-1}(k-1)^{-1}), ki\oplus (0^{-1}W^{(k)}_{k-1} W^{(k)}_{k-1}  W^{(k)}_{k-1}(0 (k-1))^{-1}), ki\oplus (00): i\geq 1\}.
    \end{align*}
\end{definition}

The following lemmas give the structure of the palindromes of $W^{(k)}_n$, when $n\leq 3k-2$.
\begin{lem}\label{paln<k}
Let $n<k$ and $P$ be a maximal palindromic factor of $W^{(k)}_n$. Then $P\in P^{(k)}_1$.
\end{lem}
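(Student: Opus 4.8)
The plan is to prove this by induction on $n$, in parallel with a strengthened statement that records not just which maximal palindromes occur, but where they sit. The base case $n=2$ is immediate: $W^{(k)}_2 = 0102$ for every $k>2$, whose only maximal palindrome factor is $010$, which equals $W^{(k)}_2 2^{-1}$ and so lies in $P^{(k)}_1$ (taking $i=0$). For the inductive step, I would use Lemma~\ref{struct2}, which gives $W^{(k)}_n = W^{(k)}_{n-1}\,W^{(k)}_{n-1}(n-1)^{-1}\,n$, together with the sharper decomposition from equation~(\ref{initialeq}), namely
\[
W^{(k)}_n n^{-1} = W^{(k)}_{n-1}(n-1)^{-1}\,(n-1)\,W^{(k)}_{n-1}(n-1)^{-1}.
\]
The key structural facts are: the digit $n$ occurs only once in $W^{(k)}_n$ (Lemma~\ref{lastdig}), so no maximal palindrome of $W^{(k)}_n$ can contain it and every maximal palindrome of $W^{(k)}_n$ is already a maximal palindrome of $W^{(k)}_n n^{-1}$; likewise the digit $n-1$ occurs exactly once in $W^{(k)}_n n^{-1}$, at the ``center'' of the displayed decomposition; and $W^{(k)}_{n-1}(n-1)^{-1}$ is itself a palindrome by Lemma~\ref{wn<k}.

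From here the argument mirrors the proof of Lemma~\ref{initial1}: a palindromic factor $P$ of $W^{(k)}_n n^{-1}$ either avoids the unique occurrence of $n-1$, in which case $P$ is a palindromic factor of a single copy of $W^{(k)}_{n-1}$ and, if maximal in $W^{(k)}_n$, is maximal there as well (the letters flanking any copy of $W^{(k)}_{n-1}$ inside $W^{(k)}_n n^{-1}$ do not extend an internal palindrome — this needs a short check using that $W^{(k)}_{n-1}$ starts with $0$ and that, by Lemma~\ref{lastdig}, its neighbours are $n-1$ on one side), so by the induction hypothesis $P\in P^{(k)}_1$; or $P$ contains the unique $n-1$, in which case $P = a\,(n-1)\,a^R$ with $a$ a nonempty suffix of the left block $W^{(k)}_{n-1}(n-1)^{-1}$, and since that block is a palindrome the maximal such $P$ is exactly $W^{(k)}_{n-1}(n-1)^{-1}\,(n-1)\,W^{(k)}_{n-1}(n-1)^{-1} = W^{(k)}_n n^{-1}$, which is the $i=0$ member of $P^{(k)}_1$ for index $n$. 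In both cases $P\in P^{(k)}_1$, completing the induction.

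The main obstacle I anticipate is the maximality bookkeeping in the first case: I must rule out that a palindrome sitting strictly inside one copy of $W^{(k)}_{n-1}$ extends, within the larger word $W^{(k)}_n n^{-1}$, past the boundary of that copy. This is where the precise forms of the prefix and suffix of $W^{(k)}_{n-1}$ (it begins with $0$, it ends with $n-1$, and more refined two-letter data from Theorem~\ref{suffix}) are needed, and where Lemma~\ref{ab<wn} — constraining which pairs $ab$ occur — does the real work, exactly as in the straddling-palindrome arguments of Section~\ref{secstrad}. Once that boundary analysis is in place, the rest is a direct transcription of the counting proof of Lemma~\ref{initial1} into a statement about the palindromes themselves rather than their number.
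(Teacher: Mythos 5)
Your proposal is correct and follows essentially the same route as the paper: both use the decomposition $W^{(k)}_n n^{-1}=W^{(k)}_{n-1}(n-1)^{-1}\,(n-1)\,W^{(k)}_{n-1}(n-1)^{-1}$ from Lemma~\ref{struct2} together with Lemma~\ref{wn<k}, induct on $n$, and split on whether the palindrome contains the unique occurrence of $n-1$. The only difference is that you spell out the maximality bookkeeping (a palindrome avoiding $n-1$ sits inside a copy of $W^{(k)}_{n-1}(n-1)^{-1}$ and its maximality transfers), which the paper dismisses with ``it is easy to see''; your version is, if anything, the more complete of the two.
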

\begin{proof}
{By Lemma \ref{wn<k}, $W^{(k)}_{n}(n)^{-1}=W^{(k)}_{n-1}(n-1)^{-1} (n-1) W^{(k)}_{n-1}(n-1)^{-1}$ is a maximal palindrome with center $n-1$. Therefore, it is easy to see that the maximal palindromes appeared in $W^{(k)}_{n}$ are either equals to $W^{(k)}_{n}(n)^{-1}$ or are a maximal palindrome of $W^{(k)}_{n-1}(n-1)^{-1}$. Hence, using induction we can see that the set all maximal palindromes of $W^{(k)}_{n}$ is $\{W_n^{(k)}n^{-1}: 2\leq n \leq k-1\}\subset P^{(k)}_1$.}
\end{proof}

\begin{lem}\label{pal k<n<2k-3}
Let $ k\leq n\leq 2k-3$ and $P$ be a maximal palindromic factor of $W^{(k)}_n$. If $P\notin P^{(k)}_1$, then $P\in P^{(k)}_2$.
\end{lem}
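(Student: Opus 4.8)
The plan is to argue by induction on $n$ within the range $k\le n\le 2k-3$, exploiting the block decomposition (\ref{structeq}), $W^{(k)}_n=\prod_{i=n-1}^{n-k+1}W^{(k)}_i\,(k\oplus W^{(k)}_{n-k})$, together with the classification of the palindrome factors of $W^{(k)}_n$ ($n\ge k$) into the three types introduced earlier. The first thing I would isolate is a \emph{transfer principle}: if a palindrome factor $P$ of $W^{(k)}_n$ lies entirely inside one block $B$ of this decomposition and $P$ is a maximal palindrome factor of $W^{(k)}_n$, then $P$ is also a maximal palindrome factor of $B$. Indeed, a strictly longer palindrome factor of $B$ with the same centre, translated by the offset at which $B$ sits inside $W^{(k)}_n$, would give a strictly longer palindrome factor of $W^{(k)}_n$ with the same centre position, contradicting maximality of $P$.

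Now let $P$ be a maximal palindrome factor of $W^{(k)}_n$. Since $n\le 2k-3$, Theorem \ref{strathm} gives $S^{(k)}(n)=0$, so $P$ is of Type 1 or Type 2 and there is nothing of Type 3. If $P$ is of Type 2, it is a bordering palindrome of some type $j$; any longer palindrome factor of $W^{(k)}_n$ with the same centre position still contains the boundary between $W^{(k)}_j$ and $W^{(k)}_{j-1}$ and, Type 3 being excluded, is again a bordering palindrome of type $j$. Hence $P$ is a \emph{maximal} bordering palindrome, and Lemma \ref{bj} identifies it as $(W^{(k)}_{j-1}\cdots W^{(k)}_{n-k+1})^R\,j\,(W^{(k)}_{j-1}\cdots W^{(k)}_{n-k+1})$, which is exactly a member of $P^{(k)}_2$ (take $i=0$), and we are done in this case.

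If $P$ is of Type 1, it is contained in a single block $B\in\{W^{(k)}_{n-1},\dots,W^{(k)}_{n-k+1},\,k\oplus W^{(k)}_{n-k}\}$, and by the transfer principle $P$ is a maximal palindrome factor of $B$. When $B=k\oplus W^{(k)}_{n-k}$ or $B=W^{(k)}_i$ with $i\le k-1$ --- note $0\le n-k\le k-3$ and $n-k+1\le k-2$, so the underlying index is $<k$ here --- Lemma \ref{paln<k} shows the associated maximal palindrome lies in $P^{(k)}_1$; since $P^{(k)}_1$ is closed under the shift $k\oplus(\cdot)$ (absorbed into its $ki\oplus$ parameter), it follows that $P\in P^{(k)}_1$, contradicting the hypothesis. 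Hence this sub-case does not occur, and the only remaining possibility is $B=W^{(k)}_i$ with $k\le i\le n-1\le 2k-4$; then $i$ lies in $[k,2k-3]$ and $i<n$, so the induction hypothesis applied to $W^{(k)}_i$ gives $P\in P^{(k)}_1\cup P^{(k)}_2$ (using that $P^{(k)}_2$ is a fixed set independent of the outer index), and since $P\notin P^{(k)}_1$ we conclude $P\in P^{(k)}_2$.

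The step I expect to be the most delicate is the bookkeeping on index ranges: one must verify that every block arising from a Type-1 palindrome is either covered by Lemma \ref{paln<k} (underlying index $<k$) or falls under the induction hypothesis (index in $[k,n-1]\subseteq[k,2k-3]$), and that the base case $n=k$ needs no hypothesis because then all Type-1 blocks have index $\le k-1$. A minor point to keep in mind throughout is that ``maximal palindrome factor'' is understood with ``palindrome factor'' meaning length at least $2$ --- the convention already built into the counting function $P^{(k)}(n)$, e.g. $P^{(k)}(1)=0$ --- so that degenerate blocks such as $W^{(k)}_0$, $W^{(k)}_1$ or $k\oplus W^{(k)}_0$ simply contribute nothing.
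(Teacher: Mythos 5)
Your proof is correct and follows essentially the same route as the paper's: exclude straddling palindromes via Theorem \ref{strathm}, identify the bordering (Type~2) maximal palindromes with elements of $P^{(k)}_2$ via Lemmas \ref{centerpal1} and \ref{bj}, and reduce Type~1 palindromes through the decomposition (\ref{structeq}) to Lemma \ref{paln<k} or to smaller indices. The only difference is one of rigor: you make explicit the induction on $n$ and the ``transfer principle'' for maximality inside a block, which the paper's one-line proof leaves implicit.
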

\begin{proof}
{By Theorem \ref{strathm}, $W^{(k)}_n$ has no straddling palindrome. Using equation (\ref{structeq}) and lemma \ref{bj}, we found that the only maximal palindromes of $W^{(k)}_n$ which are not appeared in $P^{(k)}_1$
are from the set $P^{(k)}_2=\{(W^{(k)}_{j-1}W^{(k)}_{j-2}\ldots W^{(k)}_{n-k+1})^R\,\, j\,\,(W^{(k)}_{j-1}W^{(k)}_{j-2}\ldots W^{(k)}_{n-k+1})
: n-k+2\leq j\leq k-1\}$.}
\end{proof}

\begin{lem}\label{pal n=2k-2}
Let $ n=2k-2$ and $P$ be a maximal palindromic factor of $W^{(k)}_n$. Then $P\in (P^{(k)}_1 \cup P^{(k)}_2)$.
\end{lem}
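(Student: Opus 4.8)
The plan is to exploit the fact that the exponent $n=2k-2$ lies exactly between the two regimes already analysed: it is large enough that $W^{(k)}_n$ carries no bordering palindrome, and small enough that it carries no straddling palindrome, so that all of its maximal palindromes are inherited, block by block, from the words treated in Lemmas \ref{paln<k} and \ref{pal k<n<2k-3}. I would start from the decomposition
$$W^{(k)}_{2k-2}=W^{(k)}_{2k-3}\,W^{(k)}_{2k-4}\cdots W^{(k)}_{k-1}\,(k\oplus W^{(k)}_{k-2})$$
supplied by (\ref{structeq}). By Lemma \ref{borderpal1} a bordering palindrome of type $j$ can occur in $W^{(k)}_n$ only when $n\le 2k-3$, and by Lemma \ref{straddling} a straddling palindrome can occur only when $2k-1\le n$; since $2k-3<2k-2<2k-1$, the word $W^{(k)}_{2k-2}$ has neither. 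Consequently every palindromic factor of $W^{(k)}_{2k-2}$ is of Type 1, i.e.\ it occurs inside a single block of the displayed decomposition: either inside some $W^{(k)}_i$ with $k-1\le i\le 2k-3$, or inside $k\oplus W^{(k)}_{k-2}$.

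Next I would take a maximal palindromic factor $P$ of $W^{(k)}_{2k-2}$ and fix an occurrence of it at which it is maximal; by the previous step this occurrence sits inside one block $B$ of the decomposition. Since $B$ is itself a factor of $W^{(k)}_{2k-2}$, and the palindromes centred at a given position of $B$ form a subset of the palindromes centred at the corresponding position of $W^{(k)}_{2k-2}$, that occurrence is maximal inside $B$ as well; hence $P$ is a maximal palindromic factor of $B$. Three cases remain. If $B=W^{(k)}_{k-1}$, then Lemma \ref{paln<k} gives $P\in P^{(k)}_1$. If $B=W^{(k)}_i$ with $k\le i\le 2k-3$ (a nonempty range because $k>2$), then Lemma \ref{pal k<n<2k-3} applied with $n=i$ gives $P\in P^{(k)}_1\cup P^{(k)}_2$. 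If $B=k\oplus W^{(k)}_{k-2}$, then, since adding $k$ to every letter is a bijection preserving palindromicity, length and centre, $P=k\oplus Q$ for some maximal palindromic factor $Q$ of $W^{(k)}_{k-2}$; as $k-2<k$, Lemma \ref{paln<k} gives $Q\in P^{(k)}_1$, and because the defining index in $P^{(k)}_1$ ranges over all $i\ge 0$, the set $P^{(k)}_1$ is stable under adding $k$ to all letters, so $P=k\oplus Q\in P^{(k)}_1$. In every case $P\in P^{(k)}_1\cup P^{(k)}_2$, as required.

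I do not anticipate a genuine obstacle here: the only point demanding a word of justification is the reduction ``maximal palindrome of $W^{(k)}_{2k-2}$ implies maximal palindrome of the containing block'', which is immediate once one notes that passing to a factor can only shrink the set of competing palindromes at a fixed centre. The substance of the lemma is the purely combinatorial observation that $n=2k-2$ is precisely the exponent at which both Type 2 and Type 3 palindromes are absent, so that the classification reduces to applying the two previous lemmas to the finitely many blocks appearing in (\ref{structeq}).
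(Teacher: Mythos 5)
Your proposal is correct and follows essentially the same route as the paper: both observe that at $n=2k-2$ Lemma \ref{borderpal1} (resp.\ Lemma \ref{straddling}) rules out bordering (resp.\ straddling) palindromes, so by the decomposition (\ref{structeq}) every maximal palindrome lives in a single block and is handled by Lemmas \ref{paln<k} and \ref{pal k<n<2k-3}. Your write-up is in fact slightly more careful than the paper's, since you explicitly justify the transfer of maximality to the containing block and the stability of $P^{(k)}_1$ under the shift $k\oplus{}$.
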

\begin{proof}
{By Lemma \ref{bordernumber} and Theorem \ref{strathm}, $W^{(k)}_n$ has neither straddling palindrome nor bordering palindrome. Hence, using equation (\ref{structeq}), we conclude that
$\mathcal{P}al(W^{(k)}_n)=\displaystyle{\bigcup_{j=n-k+1}^{k-1}}\mathcal{P}al(W^{(k)}_j)\bigcup k\oplus(\mathcal{P}al(W^{(k)}_{n-k}))$. By Lemmas \ref{paln<k} and \ref{pal k<n<2k-3}, we know that $\displaystyle{\bigcup_{j=n-k+1}^{k-1}}\mathcal{P}al(W^{(k)}_j)\bigcup k\oplus(\mathcal{P}al(W^{(k)}_{n-k}))\subseteq (P^{(k)}_1 \cup P^{(k)}_2)$. Therefore $\mathcal{P}al(W^{(k)}_n)\subseteq (P^{(k)}_1 \cup P^{(k)}_2)$, as desired.}
\end{proof}

\begin{lem}\label{pal n=2k-1}
Let $ n=2k-2$ and $P$ be a maximal palindromic factor of $W^{(k)}_n$. Then either $P=kk$ or $P\in (P^{(k)}_1 \cup P^{(k)}_2)$.
\end{lem}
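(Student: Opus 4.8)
The plan is to take the statement exactly as written, with $n = 2k-2$, and to show that at this particular index the extra alternative $P = kk$ never occurs, so that the claim reduces to the classification already established for this index. The first step is to locate $n = 2k-2$ relative to the two regimes in which nontrivial palindromes arise. Since $2k-2 > 2k-3$, Lemma \ref{bordernumber} gives $B^{(k)}(2k-2,j)=0$ for every $j$, so $W^{(k)}_{2k-2}$ has no bordering palindrome; and since $2k-2 < 2k-1$, Lemma \ref{straddling} shows that $W^{(k)}_{2k-2}$ has no straddling palindrome. Consequently every maximal palindrome of $W^{(k)}_{2k-2}$ is of Type 1, i.e.\ it is contained in one of the blocks $W^{(k)}_{2k-3},\ldots,W^{(k)}_{k-1}$ or in $k\oplus W^{(k)}_{k-2}$ of the factorization (\ref{structeq}).

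Next I would record that the word $kk$ cannot be such a palindrome. By Lemma \ref{n=2k-1}, $kk$ is the unique straddling palindrome of $W^{(k)}_{2k-1}$ and first appears only at index $2k-1$; at index $2k-2$ there is no straddling palindrome at all, so the disjunct $P=kk$ of the statement is vacuous here. It then remains to classify the Type-1 palindromes, which is precisely what Lemma \ref{pal n=2k-2} accomplishes at this index: feeding the factorization (\ref{structeq}) into Lemmas \ref{paln<k} and \ref{pal k<n<2k-3} (applied to the blocks of index $<k$ and of index $k\leq i\leq 2k-3$, together with the shifted copy $k\oplus W^{(k)}_{k-2}$) yields $P\in P^{(k)}_1\cup P^{(k)}_2$. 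Invoking that conclusion gives the required disjunction, since $P\in(P^{(k)}_1\cup P^{(k)}_2)$ already implies ``$P=kk$ or $P\in(P^{(k)}_1\cup P^{(k)}_2)$''.

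The only delicate point — and the one I would check most carefully — is the index bookkeeping, because $n=2k-2$ sits exactly on the boundary between the bordering regime $k\leq n\leq 2k-3$ and the straddling regime $2k-1\leq n\leq 3k-2$. The whole argument rests on the two strict inequalities $2k-2>2k-3$ and $2k-2<2k-1$, which place $2k-2$ outside both regimes and thereby force every palindrome to be of Type 1; once this is confirmed, no further computation is needed and the classification is inherited directly from the preceding lemmas. I would also note that this is exactly why the alternative $P=kk$, genuinely required one index later, contributes nothing here and is listed only for uniformity with the neighbouring cases.
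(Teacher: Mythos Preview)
You have read the hypothesis literally as $n=2k-2$, but the lemma's label \texttt{pal n=2k-1}, its position in the sequence of lemmas, and above all the paper's own proof (which invokes Lemma~\ref{n=2k-1} about the straddling palindrome of $W^{(k)}_{2k-1}$) make clear that ``$n=2k-2$'' is a typo for ``$n=2k-1$''. Under the literal reading your argument is correct, but it is then nothing more than a restatement of Lemma~\ref{pal n=2k-2} with a vacuous extra disjunct---you even say so yourself. That is not what the paper is proving here.

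For the intended value $n=2k-1$, the alternative $P=kk$ is \emph{not} vacuous: it is the unique straddling palindrome of $W^{(k)}_{2k-1}$ (Lemma~\ref{n=2k-1}). The paper's argument runs as follows. Suppose $P\notin P^{(k)}_1\cup P^{(k)}_2$. By Lemma~\ref{pal n=2k-2}, every maximal palindrome of $W^{(k)}_j$ for $j\le 2k-2$ lies in $P^{(k)}_1\cup P^{(k)}_2$, so $P$ is not of Type~1 and must be a bordering or straddling palindrome of $W^{(k)}_{2k-1}$. Lemma~\ref{borderpal1} rules out bordering palindromes (since $2k-1>2k-3$), and Lemma~\ref{n=2k-1} forces the straddling palindrome to be $kk$. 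Your proposal misses this case entirely because the index bookkeeping you flagged as ``delicate'' was applied to the wrong index.
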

\begin{proof}
{Let $P\notin (P^{(k)}_1 \cup P^{(k)}_2)$. Then by Lemma \ref{pal n=2k-2}, $P$ is not a maximal palindromic factor of any $W^{(k)}_j$, $j<n$. Hence, $P$ is either straddling palindrome or a bordering palindrome. By Lemmas \ref{borderpal1} and \ref{n=2k-1}, $P$ is straddling and $P=kk$.}
\end{proof}

\begin{lem}\label{pal 2k<n<3k-3}
Let $2k\leq n \leq 3k-3$ and $P$ be a maximal palindromic factor of $W^{(k)}_n$. Then $P\in (P^{(k)}_1 \cup P^{(k)}_2 \cup P^{(k)}_3 \cup \{kk\})$.
\end{lem}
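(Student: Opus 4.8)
The plan is to prove this by strong induction on $n$ over the range $2k\le n\le 3k-3$, running the Type~1/Type~2/Type~3 trichotomy for the palindromic factors of $W^{(k)}_n$ against the decomposition (\ref{structeq}). Two of the three types collapse immediately in this range. Since $n\ge 2k>2k-3$, Lemma \ref{bordernumber} gives $B^{(k)}(n,j)=0$ for every $j$, so $W^{(k)}_n$ has no bordering palindrome and a maximal palindromic factor of Type~2 cannot occur. For Type~3 the relevant range $2k-1<n<3k-2$ is exactly the hypothesis of Lemma \ref{2k-1<n<3k-2}: concatenating the two $(X,Y)$-pairs listed there produces precisely the words $k\oplus(W^{(k)}_{n-2k+1}W^{(k)}_{n-2k+1}(n-2k+1)^{-1})$ and $k\oplus(W^{(k)}_{n-2k+1}W^{(k)}_{n-2k+1}W^{(k)}_{n-2k+1}(n-2k+1)^{-1})$, which are the two elements of $P^{(k)}_3$ with $i=1$. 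One also observes that a maximal palindromic factor $P$ of Type~3 must coincide with the maximal straddling palindrome at its centre, because the latter contains $P$, has the same centre position, and is again a (straddling) palindromic factor of $W^{(k)}_n$, so maximality of $P$ forces equality.

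The remaining, and main, case is a maximal palindrome $P$ of Type~1, i.e.\ one lying entirely inside a single block of $W^{(k)}_n=W^{(k)}_{n-1}\cdots W^{(k)}_{n-k+1}(k\oplus W^{(k)}_{n-k})$. The crucial reduction is that if $P$ lies inside $W^{(k)}_j$ for some $n-k+1\le j\le n-1$, then $P$ is already a maximal palindromic factor of $W^{(k)}_j$: any strictly longer palindrome at the same centre inside $W^{(k)}_j$ would also be a longer palindrome at that centre inside $W^{(k)}_n$, contradicting maximality of $P$ in $W^{(k)}_n$. Likewise, if $P$ lies inside $k\oplus W^{(k)}_{n-k}$ then $P=k\oplus Q$ with $Q$ a maximal palindromic factor of $W^{(k)}_{n-k}$.

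After this reduction it is a matter of bookkeeping over indices. The index $j$ ranges over $[k+1,3k-4]$: for $k+1\le j\le 2k-3$, Lemma \ref{pal k<n<2k-3} gives $P\in P^{(k)}_1\cup P^{(k)}_2$; for $j=2k-2$, Lemma \ref{pal n=2k-2} gives the same; for $j=2k-1$, Lemma \ref{pal n=2k-1} gives $P=kk$ or $P\in P^{(k)}_1\cup P^{(k)}_2$; and for $2k\le j\le 3k-4$ the induction hypothesis gives $P\in P^{(k)}_1\cup P^{(k)}_2\cup P^{(k)}_3\cup\{kk\}$. For the shifted block, $n-k\in[k,2k-3]$, so Lemma \ref{pal k<n<2k-3} applies to $W^{(k)}_{n-k}$ and yields $Q\in P^{(k)}_1\cup P^{(k)}_2$, whence $P=k\oplus Q\in P^{(k)}_1\cup P^{(k)}_2$, using that each of $P^{(k)}_1,P^{(k)}_2,P^{(k)}_3$ is closed under the operation $R\mapsto k\oplus R$ (immediate from the definitions, where the shift parameter is quantified over all admissible values). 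Collecting the cases gives the claim. The base case $n=2k$ of the induction involves only $j\in[k+1,2k-1]$ and $n-k=k$, so it is covered by the earlier lemmas without self-reference.

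I expect the genuine work to be concentrated not in any single deep step but in the reduction-to-a-subfactor argument together with the index bookkeeping: one must verify carefully that maximality of $P$ in $W^{(k)}_n$ really transfers to maximality of the \emph{same} palindrome in the relevant sub-block, and that every value of $j$ and of $n-k$ arising from (\ref{structeq}) is covered by exactly one of the earlier structure lemmas or by the inductive hypothesis, with no gap at the boundary values $j=2k-2$ and $j=2k-1$. The closure of $P^{(k)}_1,P^{(k)}_2,P^{(k)}_3$ under $k\oplus(\cdot)$ is routine but must be invoked explicitly for the shifted block.
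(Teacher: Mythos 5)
Your proof is correct and follows essentially the same route as the paper's: the Type 1/2/3 trichotomy against (\ref{structeq}), Lemma \ref{bordernumber} to rule out bordering palindromes, Lemma \ref{2k-1<n<3k-2} to identify the straddling ones with the $i=1$ elements of $P^{(k)}_3$, and reduction of Type 1 occurrences to the earlier structure lemmas. If anything, you are more explicit than the paper, which leaves implicit both the induction needed for the blocks $W^{(k)}_j$ with $2k\le j\le n-1$ and the $k\oplus$ closure argument for the shifted block $k\oplus W^{(k)}_{n-k}$.
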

\begin{proof}
{Let $P\notin (P^{(k)}_1 \cup P^{(k)}_2 \cup  \{kk\})$. Hence by Lemmas \ref{paln<k}, \ref{pal k<n<2k-3}, \ref{pal n=2k-2} and \ref{pal n=2k-1}, $P$ is not a maximal palindromic factor of any $W^{(k)}_j$, $j<n$. Hence, $P$ is either straddling palindrome or a bordering palindrome. By Lemma \ref{bordernumber}, $W^{(k)}_n$ has no bordering palindrome and by Lemma \ref{2k-1<n<3k-2}, the maximal straddling palindromes of $W^{(k)}_n$ are all from the set $\{k\oplus (W^{(k)}_{n-2k+1} W^{(k)}_{n-2k+1}(n-2k+1)^{-1}), k\oplus (W^{(k)}_{n-2k+1} W^{(k)}_{n-2k+1}  W^{(k)}_{n-2k+1}(n-2k+1)^{-1})\}\subset P^{(k)}_3$, as desired.}
\end{proof}

\begin{lem}\label{pal n=3k-2}
Let $2k\leq n \leq 3k-2$ and $P$ be a maximal palindromic factor of $W^{(k)}_n$. Then $P\in (P^{(k)}_1 \cup P^{(k)}_2 \cup P^{(k)}_3 \cup P^{(k)}_4 )$.
\end{lem}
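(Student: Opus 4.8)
The plan is to split the claim into the range $2k\le n\le 3k-3$, which is essentially already handled, and the single remaining value $n=3k-2$. For $2k\le n\le 3k-3$, Lemma~\ref{pal 2k<n<3k-3} shows that a maximal palindromic factor $P$ of $W^{(k)}_n$ lies in $P^{(k)}_1\cup P^{(k)}_2\cup P^{(k)}_3\cup\{kk\}$, and since $kk=k\cdot 1\oplus(00)$ is one of the words listed in $P^{(k)}_4$, this already yields $P\in P^{(k)}_1\cup P^{(k)}_2\cup P^{(k)}_3\cup P^{(k)}_4$. So the only genuine work is the case $n=3k-2$. There I would start from the Type~1/2/3 classification of palindrome factors of $W^{(k)}_n$ (valid for $n\ge k$): every palindrome factor is of Type~1 (contained in an occurrence of one of $W^{(k)}_{2k-1},\dots,W^{(k)}_{3k-3}$ or in $k\oplus W^{(k)}_{2k-2}$), of Type~2 (a bordering palindrome), or of Type~3 (a straddling palindrome). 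By Lemma~\ref{borderpal1} (equivalently Lemma~\ref{bordernumber}), since $3k-2>2k-3$ the word $W^{(k)}_{3k-2}$ has no bordering palindrome, so Type~2 does not occur.

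For the Type~3 case I would observe that a maximal palindromic factor $P$ of $W^{(k)}_{3k-2}$ that straddles is automatically a \emph{maximal} straddling palindrome, because any longer palindrome with the same center position contains $P$ and hence also straddles. Then Lemma~\ref{n=3k-2} identifies the only two maximal straddling palindromes of $W^{(k)}_{3k-2}$ as $k\oplus(W^{(k)}_{k-1}W^{(k)}_{k-1}(k-1)^{-1})$ and $k\oplus(0^{-1}W^{(k)}_{k-1}W^{(k)}_{k-1}W^{(k)}_{k-1}(0(k-1))^{-1})$, both of which appear in $P^{(k)}_4$ with $i=1$ in Definition~\ref{defP}; hence $P\in P^{(k)}_4$.

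For the Type~1 case, $P$ occurs inside an occurrence of one of the blocks $W^{(k)}_i$ with $2k-1\le i\le 3k-3$, or inside an occurrence of $k\oplus W^{(k)}_{2k-2}$. Maximality of $P$ in $W^{(k)}_{3k-2}$ forces $P$ to be maximal within that occurrence as well (a longer palindrome sharing the center inside the block would survive inside $W^{(k)}_{3k-2}$), so $P$ — or, for the last block, the word $Q$ with $P=k\oplus Q$ — is a maximal palindromic factor of the corresponding shorter word. I would then invoke Lemma~\ref{pal n=2k-1} for $i=2k-1$, Lemma~\ref{pal 2k<n<3k-3} for $2k\le i\le 3k-3$, and Lemma~\ref{pal n=2k-2} for the block $k\oplus W^{(k)}_{2k-2}$, landing each time in $P^{(k)}_1\cup P^{(k)}_2\cup P^{(k)}_3\cup\{kk\}$. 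Because every family $P^{(k)}_\ell$ is stable under $X\mapsto k\oplus X$ (this only shifts the running index $i$ in Definition~\ref{defP} up by one, staying in the allowed range), the contribution of the shifted block is absorbed into $P^{(k)}_1\cup P^{(k)}_2\cup P^{(k)}_3$; together with $kk\in P^{(k)}_4$ this closes the Type~1 case and finishes the proof.

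I expect the only delicate points to be bookkeeping: confirming that a Type~1 maximal palindrome of $W^{(k)}_{3k-2}$ is genuinely a maximal palindrome of the shorter block in which it sits (so the previous lemmas apply verbatim), and checking the $\oplus k$-stability of the sets $P^{(k)}_\ell$ so that the palindromes arising from the block $k\oplus W^{(k)}_{2k-2}$ stay within these four families.
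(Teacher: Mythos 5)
Your proposal is correct and follows essentially the same route as the paper: both reduce the claim to the Type 1/2/3 decomposition coming from equation (\ref{structeq}), dispose of Type 1 via the earlier structure lemmas, rule out bordering palindromes by Lemma \ref{bordernumber}, and identify the straddling palindromes via Lemma \ref{n=3k-2} as elements of $P^{(k)}_4$. Your write-up is merely more explicit than the paper's about two points it glosses over --- that a maximal palindromic factor of $W^{(k)}_{3k-2}$ sitting inside a block is maximal within that block, and that the families $P^{(k)}_\ell$ are stable under $X\mapsto k\oplus X$ --- and it correctly cites Lemma \ref{n=3k-2} where the paper's text mistakenly points to Lemma \ref{2k-1<n<3k-2}.
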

\begin{proof}
{Let $P\notin (P^{(k)}_1 \cup P^{(k)}_2 \cup P^{(k)}_3 )$. Hence by Lemmas \ref{paln<k}, \ref{pal k<n<2k-3}, \ref{pal n=2k-2}, \ref{pal n=2k-1} and \ref{pal 2k<n<3k-3}, $P$ is not a maximal palindromic factor of any $W^{(k)}_j$, $j<n$. Hence, $P$ is either straddling palindrome or a bordering palindrome. By Lemma \ref{bordernumber}, $W^{(k)}_n$ has no bordering palindrome and by Lemma \ref{2k-1<n<3k-2}, the maximal straddling palindromes of $W^{(k)}_n$ are all from the set $\{k\oplus (W^{(k)}_{k-1}  W^{(k)}_{k-1}(k-1)^{-1}), k\oplus (0^{-1}W^{(k)}_{k-1} W^{(k)}_{k-1}  W^{(k)}_{k-1}(0 (k-1))^{-1})\}\subset P^{(k)}_4$, as desired.}
\end{proof}

\begin{thm}\label{palstruct}
	Let $P$ be a  factor of $W^{(k)}$. Then $P$ is a maximal palindromic factor of $W^{(k)}$ if and only if
	$P\in {\displaystyle\bigcup_{i=1}^{4}} P^{(k)}_i $.
\end{thm}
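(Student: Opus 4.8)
The plan is to prove the two inclusions separately, reducing everything to the finite words $W^{(k)}_n$ via the preceding lemma $\mathcal{P}al(W^{(k)})=ki\oplus(\bigcup_{j=1}^{3k-2}\mathcal{P}al(W^{(k)}_j))$. First I would establish the backward direction: each element $P\in\bigcup_{i=1}^4 P^{(k)}_i$ is a maximal palindromic factor of $W^{(k)}$. For this I would observe that every generator listed in Definition \ref{defP} has been shown, in the cited lemmas of the previous sections, to be a \emph{maximal} bordering or straddling palindrome (or a maximal palindrome of some $W^{(k)}_n$ with $n<k$): the sets $P^{(k)}_1,\dots,P^{(k)}_4$ are precisely the $ki\oplus(\cdot)$-shifts of the maximal palindromes produced in Lemmas \ref{wn<k}, \ref{bj}, \ref{2k-1<n<3k-2}, \ref{n=3k-2} together with $kk$ and $00$. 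The only point needing care is that maximality is preserved under the operation $ki\oplus(\cdot)$ and under passing from $W^{(k)}_n$ to the full word $W^{(k)}$; this follows because, by Theorem \ref{growth} and Lemma \ref{struct} (iterated), any occurrence of a shifted block $ki\oplus W^{(k)}_m$ inside $W^{(k)}$ is surrounded on both sides by digits $\ge ki$, so no palindrome centered inside such a block can be extended past its boundary — a palindrome extending across the boundary would have to be centered on a digit $\ge ki$ on one side while reading a smaller digit on the other, which is impossible.

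Next I would handle the forward direction: if $P$ is a maximal palindromic factor of $W^{(k)}$ then $P\in\bigcup_{i=1}^4 P^{(k)}_i$. By the reduction lemma, $P=ki\oplus Q$ for some $i\ge 0$ and some palindrome $Q\prec W^{(k)}_j$ with $1\le j\le 3k-2$. I would argue that $Q$ may be taken to be a \emph{maximal} palindromic factor of $W^{(k)}_j$ for an appropriate $j$ (maximality in $W^{(k)}$ transfers down, again using that the surrounding digits after the $ki\oplus$ shift only grow). Then it is a matter of invoking the case analysis already carried out: Lemma \ref{paln<k} covers $j<k$ and gives $Q\in P^{(k)}_1$; Lemmas \ref{pal k<n<2k-3} and \ref{pal n=2k-2} give $Q\in P^{(k)}_1\cup P^{(k)}_2$ for $k\le j\le 2k-2$; Lemma \ref{pal n=2k-1} adds $kk$ for $j=2k-1$; Lemma \ref{pal 2k<n<3k-3} adds $P^{(k)}_3$ for $2k\le j\le 3k-3$; and Lemma \ref{pal n=3k-2} adds $P^{(k)}_4$ for $j=3k-2$. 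Since $kk\in P^{(k)}_4$ and the shift $ki\oplus$ of each of these sets lands back in the same $P^{(k)}_i$ by construction, we conclude $P\in\bigcup_{i=1}^4 P^{(k)}_i$.

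I expect the main obstacle to be the careful bookkeeping of maximality under the interplay between the $ki\oplus$ operation and the recursive decomposition $W^{(k)}_n=\prod_{i=n-1}^{n-k+1}W^{(k)}_i\,(k\oplus W^{(k)}_{n-k})$. Concretely, one must verify that a palindrome that is maximal as a factor of $W^{(k)}$ restricts to a palindrome that is maximal as a factor of the smallest $W^{(k)}_j$ containing it, and conversely that maximal palindromes of the pieces remain maximal in the whole. The key technical ingredient here is Lemma \ref{lastdig} (the last digit of $W^{(k)}_m$ is the unique largest one) combined with Theorem \ref{suffix} and Lemma \ref{ab<wn}, which together control exactly which digits can sit adjacent to the boundary between two consecutive blocks; these are precisely the tools used in the individual lemmas above, so the proof of the theorem is mostly an assembly job once the maximality-transfer principle is stated cleanly. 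I would state that principle as a short sublemma at the start of the proof and then let the enumeration of Lemmas \ref{paln<k}--\ref{pal n=3k-2} do the rest.
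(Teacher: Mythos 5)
Your proposal is correct in outline and takes essentially the same route as the paper: the backward inclusion is read off from Lemmas \ref{wn<k}, \ref{bj}, \ref{2k-1<n<3k-2}, \ref{n=2k-1} and \ref{n=3k-2}, and the forward inclusion is the case analysis of Lemmas \ref{paln<k}--\ref{pal n=3k-2} combined with the reduction $\mathcal{P}al(W^{(k)})=ki\oplus\bigl(\bigcup_{j=1}^{3k-2}\mathcal{P}al(W^{(k)}_j)\bigr)$, which is exactly the assembly the paper performs (leaving the maximality-transfer point implicit). One caveat: your proposed justification of that transfer --- that any occurrence of a block $ki\oplus W^{(k)}_m$ in $W^{(k)}$ is flanked on both sides by digits $\geq ki$ --- is false as stated, since in the decomposition $W^{(k)}_n=\prod_{i=n-1}^{n-k+1}W^{(k)}_i\,(k\oplus W^{(k)}_{n-k})$ the shifted block is preceded by the last digit $n-k+1$ of $W^{(k)}_{n-k+1}$, which is smaller than $k$ whenever $n<2k-1$ (e.g.\ in $W^{(3)}_3=0102013$ the block $3\oplus W^{(3)}_0=3$ is preceded by $1$); if you want to make the transfer explicit, argue instead via Lemma \ref{lastdig} and Lemma \ref{ab<wn}, as the individual lemmas themselves do.
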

\begin{proof}{First we want to prove the if part. By Lemma \ref{wn<k}, all members of $P^{(k)}_1$ are palindrome and it is clear that all of them are maximal palindrome. If $P\in P^{(k)}_2$, then by Lemma \ref{bj}, $P$ is a maximal palindrome. If $P\in P^{(k)}_3$, then by Lemma \ref{2k-1<n<3k-2}, $P$ is a maximal palindrome. Finally, all members of $P^{(k)}_4$ are maximal palindrome by lemmas \ref{n=2k-1} and \ref{n=3k-2}.

The only if part is trivial using Lemmas (\ref{paln<k}-\ref{pal n=3k-2}).
}\end{proof}

\begin{example}
Consider the word  $W^{(3)}$. Then by Definition\ref{defP}, the sets $P^{(3)}_1, \ldots, P^{(3)}_4$ are as follows:
\begin{align*}
P^{(3)}_1=& \{3i\oplus (010): i\geq 0\},\\
P^{(3)}_2=& \{3i\oplus (10201): i\geq 0\},\\
P^{(3)}_3=&  \{3i\oplus (010), 3i\oplus (01010): i\geq 1\},\\
P^{(3)}_4=& \{3i\oplus (0102010), 3i\oplus (102010201), 3i\oplus (00): i\geq 1\}.\\
\end{align*}
By  Theorem \ref{palstruct}, $\mathcal{P}al(W^{(3)})={\displaystyle\bigcup_{i=1}^{4}} P^{(3)}_i$.
\end{example}

\begin{example}
	Consider the word  $W^{(4)}$. Then by Definition \ref{defP}, the sets $P^{(4)}_1, \ldots, P^{(4)}_4$ are as follows:
	\begin{align*}
	P^{(4)}_1=& \{4i\oplus (010), 4i\oplus (0102010): i\geq 0\},\\
	P^{(4)}_2=& \{4i\oplus (10201),4i\oplus (1020103010201), 4i\oplus (201030102): i\geq 0\},\\
	P^{(4)}_3=&  \{4i\oplus (010), 4i\oplus (01010), 4i\oplus (0102010),4i\oplus (01020102010): i\geq 1\},\\
	P^{(4)}_4=& \{4i\oplus (010201030102010), 4i\oplus (102010301020103010201), 4i\oplus (00): i\geq 1\}.\\
	\end{align*}
By  Theorem \ref{palstruct}, $\mathcal{P}al(W^{(3)})={\displaystyle\bigcup_{i=1}^{4}} P^{(3)}_i$.
\end{example}

\begin{example}
	Consider the word  $W^{(5)}$. Then by Definition \ref{defP}, the sets $P^{(5)}_1, \ldots, P^{(5)}_4$ are as follows:
	\begin{align*}
	P^{(5)}_1= \{&5i\oplus (010), 5i\oplus (0102010),5i\oplus (010201030102010): i\geq 0\},\\
	P^{(5)}_2= \{&5i\oplus (10201),5i\oplus (1020103010201),5i\oplus (10201030102010401020103010201), \\
	& 5i\oplus (201030102),5i\oplus (2010301020104010201030102),
	5i\oplus (30102010401020103):	i\geq 0\},\\
	P^{(5)}_3=  \{&5i\oplus (010), 5i\oplus (01010), 5i\oplus (0102010), 5i\oplus (01020102010), 5i\oplus (010201030102010),\\
	&  5i\oplus (01020103010201030102010): i\geq 1\},\\
	P^{(5)}_4= \{&5i\oplus (00), 5i\oplus (102010301020104010201030102010401020103010201),\\&5i\oplus (0102010301020104010201030102010)
: i\geq 1\}.
	\end{align*}
By  Theorem \ref{palstruct}, $\mathcal{P}al(W^{(3)})={\displaystyle\bigcup_{i=1}^{4}} P^{(3)}_i$.
\end{example}
\subsection{Length of Palindromes in $W^{(k)}$}
In this section we want to compute all possible values for the lengths of palindromes in  $W^{(k)}$.

\begin{definition}\label{defLP}
Let $k\geq 3$, and for $1\leq i \leq 4$, $P_i$ be the sets defined in Definition \ref{defP}.
For $1\leq i \leq 4$, we define $L({P^{(k)}_i}):=\{|P|:    \alpha P {\alpha}^R\in {P^{(k)}_i}, \,\,\,\,{\rm for\,\, some}\,\,\alpha\in{\mathbb{N}}^* \}$.
\end{definition}

\begin{lem}\label{sizepi}
		For each integer $k\geq 3$
		\begin{align*}
		L({P^{(k)}_1}):=\{& 2i-1: \,  2 \leq i\leq 2^{k-2}\},\\
		L({P^{(k)}_2}):=\{& 2i-1: \,  2 \leq i\leq 2^{k-1}-1\},\\
		L({P^{(k)}_3}):= \{& 2i-1: \,  2 \leq i\leq 3.2^{k-3}\},\\
		L({P^{(k)}_4}):=\{&2,2i-1: \,  2 \leq i\leq 3.2^{k-2}\}.
		\end{align*}
\end{lem}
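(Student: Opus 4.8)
The plan is to compute each set $L(P^{(k)}_i)$ directly from the explicit descriptions in Definition \ref{defP}, using the length formulas $|W^{(k)}_m| = f^{(k)}_{m+k}$ together with the recursions of Corollary \ref{correcsize}. The key observation, which I would record first, is that for $0 \le m \le k-1$ we have $|W^{(k)}_m| = 2^m$: this follows from $|W^{(k)}_0|=1$ and the first branch of Corollary \ref{correcsize}, and it means all the words appearing inside $P^{(k)}_1$, $P^{(k)}_2$, $P^{(k)}_4$ and most of $P^{(k)}_3$ have lengths that are simply powers of two. Since adding a constant $ki$ to every letter (the $\oplus$ operation) does not change length, and since a palindrome $\alpha P \alpha^R$ of odd length $2s-1$ contributes exactly the value $2s-1$ regardless of the choice of $\alpha$, in each case $L(P^{(k)}_i)$ is just the set of lengths of the member words; I would also note that each such member word $V$ (being itself a maximal palindrome, odd length except for $00$) always admits the trivial decomposition $V = \epsilon\, V\, \epsilon$, so $|V| \in L(P^{(k)}_i)$, and no other lengths arise because the only proper sub-palindromes $\alpha V' \alpha^R$ with a common center are shorter prefixes, which are handled by the other members.

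For $L(P^{(k)}_1)$: the member words are $W^{(k)}_n (n)^{-1}$ for $2 \le n \le k-1$, of length $|W^{(k)}_n| - 1 = 2^n - 1$; writing $2^n - 1 = 2i-1$ with $i = 2^{n-1}$, as $n$ runs over $2,\dots,k-1$ the value $i$ runs over $2, 4, 8, \dots, 2^{k-2}$. But one must also count the shorter palindromes sharing the same center: by the nested structure exhibited in the proof of Lemma \ref{paln<k} (each $W^{(k)}_n (n)^{-1}$ has the form $W^{(k)}_{n-1}(n-1)^{-1}\,(n-1)\,W^{(k)}_{n-1}(n-1)^{-1}$, and prefixes $a$ of $W^{(k)}_{n-1}$ give palindromes $a(n-1)a$ of every intermediate odd length), every odd length from $3$ up to $2^{k-2+1}-1$ appears — so in fact $L(P^{(k)}_1) = \{2i-1 : 2 \le i \le 2^{k-2}\}$, matching the claim. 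I would carry out the analogous bookkeeping for the other three: for $P^{(k)}_2$ the longest member is the type-$j$ bordering palindrome with $j = k-1$ and $n - k + 2 = 2$, i.e. $n = 2k-3$, of length $2(|W^{(k)}_{k-1}| - |W^{(k)}_1|) + 1 = 2(2^{k-1} - 2) + 1 = 2^k - 3 = 2(2^{k-1}-1) - 1$ by Lemma \ref{lengthbj}, giving the top value $i = 2^{k-1}-1$; for $P^{(k)}_3$ the longest is $k \oplus (W^{(k)}_{k-1} W^{(k)}_{k-1} W^{(k)}_{k-1}(k-1)^{-1})$ with $n = 3k-3$, of length $3 \cdot 2^{k-1} - 1 = 2(3 \cdot 2^{k-2}) - 1$, giving $i = 3 \cdot 2^{k-3}$; for $P^{(k)}_4$ the longest is $0^{-1} W^{(k)}_{k-1} W^{(k)}_{k-1} W^{(k)}_{k-1}(0(k-1))^{-1}$ of length $3 \cdot 2^{k-1} - 2 = 2(3 \cdot 2^{k-2}) - 2$... here I must be careful, since the claimed top value is $3 \cdot 2^{k-2}$ with odd length $2(3\cdot 2^{k-2})-1 = 3\cdot 2^{k-1}-1$, so I would recompute $|0^{-1}W^{(k)}_{k-1}W^{(k)}_{k-1}W^{(k)}_{k-1}(0(k-1))^{-1}| = 3\cdot 2^{k-1} - 3$, an even-minus-odd discrepancy I need to reconcile (the word has length $3\cdot 2^{k-1}-1-2$; since this member is a palindrome its length parity must be consistent, and I expect after rechecking that the longest odd palindrome of this family, obtained from the nested prefix structure, has length $3\cdot 2^{k-1}-1$), plus the separate member $00$ contributing the value $2$.

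In each case the intermediate odd lengths are filled in by the same mechanism as for $P^{(k)}_1$: the defining words are built from $W^{(k)}_{k-1}$, which by Lemma \ref{struct1} has all of $W^{(k)}_0, W^{(k)}_1, \dots, W^{(k)}_{k-2}$ as nested prefixes, so taking $\alpha$ to be successive prefixes yields every odd length in the stated range; no length outside the range occurs because Lemmas \ref{strad1|A|}, \ref{strad1|B|}, \ref{strad2|A|}, \ref{strad2|B|} bound the half-lengths of the straddling palindromes and Lemma \ref{lengthbj} pins down the bordering ones exactly. The main obstacle I anticipate is purely arithmetic rather than conceptual: getting the endpoints of the four ranges exactly right, in particular tracking the $-1$'s and $-2$'s coming from the $(n)^{-1}$, $(0(k-1))^{-1}$ and $0^{-1}$ truncations and confirming they translate into the clean bounds $2^{k-2}$, $2^{k-1}-1$, $3\cdot 2^{k-3}$ and $3 \cdot 2^{k-2}$; once the longest palindrome in each family is correctly measured, the "all intermediate odd lengths occur" part is routine from the nested-prefix structure, and the extra value $2$ in $L(P^{(k)}_4)$ comes solely from the palindrome $00$.
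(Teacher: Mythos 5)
Your overall strategy --- measure the longest member of each $P^{(k)}_i$, convert that length into the top index of the range $\{2i-1\}$, and obtain every smaller odd length by peeling matched prefix/suffix pairs off these palindromes (the nested-prefix structure of $W^{(k)}_{k-1}$) --- is exactly the paper's strategy: the paper proves only the $P^{(k)}_1$ case this way (odd lengths, downward closure by $2$, maximum $|W^{(k)}_{k-1}|-1=2^{k-1}-1$) and declares the other three cases similar. So the method matches; the problems are in your execution of precisely those ``similar'' cases.

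First, in the $P^{(k)}_3$ case you take the longest member to be $ki\oplus\bigl(W^{(k)}_{k-1}W^{(k)}_{k-1}W^{(k)}_{k-1}(k-1)^{-1}\bigr)$, but the parameter in Definition \ref{defP} is $n-2k+1$ with $2k\le n\le 3k-3$, so at $n=3k-3$ the index is $k-2$: the longest member is $ki\oplus\bigl(W^{(k)}_{k-2}W^{(k)}_{k-2}W^{(k)}_{k-2}(k-2)^{-1}\bigr)$ of length $3\cdot 2^{k-2}-1$, not $3\cdot 2^{k-1}-1$. Your stated length $2(3\cdot 2^{k-2})-1$ would force the top index $3\cdot 2^{k-2}$, yet you assert $3\cdot 2^{k-3}$; the asserted value happens to be the correct one, but your arithmetic does not produce it. Second, the $P^{(k)}_4$ discrepancy you flag is real and cannot be ``reconciled after rechecking'': the longest member is $ki\oplus\bigl(0^{-1}W^{(k)}_{k-1}W^{(k)}_{k-1}W^{(k)}_{k-1}(0(k-1))^{-1}\bigr)$ of length exactly $3\cdot 2^{k-1}-3=2(3\cdot 2^{k-2}-1)-1$ (for $k=3$ this is $102010201$, of length $9$), so the largest attainable index is $3\cdot 2^{k-2}-1$ and the length $2(3\cdot 2^{k-2})-1=3\cdot 2^{k-1}-1$ is never realized by any $\alpha P\alpha^R\in P^{(k)}_4$. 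The fourth line of the lemma is therefore off by one as printed; no amount of rechecking will close that gap, and your proof must either establish the corrected bound $2\le i\le 3\cdot 2^{k-2}-1$ or exhibit a palindrome that Definition \ref{defP} does not contain. Until you commit to one of these, the $P^{(k)}_4$ case of your argument is not a proof.
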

\begin{proof}
	{We just prove $L({P^{(k)}_1})=\{ 2i-1: \,  2 \leq i\leq 2^{k-2}\}$, the proof of the rest parts are similar to this case.
By Definition \ref{defP}, it is clear that the set $L({P^{(k)}_1})$ just contain odd integers. Again by \ref{defP},
it can be seen that if $t\in L({P^{(k)}_1})$ be an odd number greater than $3$, then $t-2\in L({P^{(k)}_1})$.
So if $l_1$ be the maximum integer in $L({P^{(k)}_1})$, then $L({P^{(k)}_1})=\{3,5,\ldots, l_1\}$.
Therefore, to prove this part it suffices to show that $l_1=2^{k-1}-1$.
By Definition \ref{defP}, we found that
\begin{align*}
l_1&=|W_{k-1}^{(k)}|-1\\
&=f^{(k)}_{2k-1} \,\,\,\,({\rm using\,\, equation\,\, (\ref{size})})\\
&=2^{k-1}-1 \,\,({\rm using\,\,\,\,  (\ref{defkbonum})}).
\end{align*}
}
\end{proof}

\begin{thm}
For every integer $k\geq 3$, the palindrome complexity of $W^{(k)}$ is given by
\begin{equation*}
P_{{\tiny W^{(k)}}}(n) = \left\{
\begin{array}{ll}
\infty \;\; &\text{if } \,\, n \in \{2,3,5,\ldots, 3.2^{k-1}-1\},\\
0 \;\; &\text{otherwise.}
\end{array} \right.
\end{equation*}
\end{thm}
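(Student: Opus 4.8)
The plan is to combine the classification of maximal palindromic factors of $W^{(k)}$ (Theorem \ref{palstruct}) with the length computations of Lemma \ref{sizepi}. First I would use the reduction lemma expressing $\mathcal{P}al(W^{(k)})$ through $\mathcal{P}al(W^{(k)}_j)$ for $1\le j\le 3k-2$ and their $ki\oplus$-translates: since each $W^{(k)}_j$ is finite, this bounds the length of every palindromic factor of $W^{(k)}$ by $\max_{j\le 3k-2}|W^{(k)}_j|$, so only finitely many palindromes share any given center position, maximal palindromic factors of $W^{(k)}$ are well defined, and Theorem \ref{palstruct} applies. Consequently, any palindromic factor $P$ of $W^{(k)}$ is the central sub-word of the (unique) maximal palindrome with the same center position, which lies in $\bigcup_{i=1}^4 P^{(k)}_i$; hence, for $|P|\ge 2$, we get $|P|\in \bigcup_{i=1}^4 L(P^{(k)}_i)$ in the notation of Definition \ref{defLP}, and conversely every such length is realized. (The value at $n=1$ is $\infty$, the digits themselves being infinitely many palindromes; we read the statement as concerning $n\ge 2$.)

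Second, I would compute $\bigcup_{i=1}^4 L(P^{(k)}_i)$ directly from Lemma \ref{sizepi}. Each $L(P^{(k)}_i)$ is a block of consecutive odd numbers starting at $3$ (together with $\{2\}$ when $i=4$), with maxima $2^{k-1}-1$, $2^{k}-3$, $3\cdot 2^{k-2}-1$, and $3\cdot 2^{k-1}-1$ respectively. For $k\ge 3$ one has $3\cdot 2^{k-1}-1-(2^{k}-3)=2^{k-1}+2>0$, and $3\cdot 2^{k-1}-1$ trivially dominates the remaining two maxima, so $L(P^{(k)}_4)$ already contains every admissible odd length. Therefore $\bigcup_{i=1}^4 L(P^{(k)}_i)=\{2\}\cup\{3,5,\dots,3\cdot 2^{k-1}-1\}=\{2,3,5,\dots,3\cdot 2^{k-1}-1\}$, which pins down the support of $P_{W^{(k)}}$ exactly.

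Third, I would upgrade ``occurs'' to ``occurs infinitely often''. By Definition \ref{defP}, each family $P^{(k)}_i$ consists of the words $ki\oplus Q$ for $i$ ranging over $\mathbb{N}$ (or $i\ge 1$) with $Q$ from a fixed finite list, and by Theorem \ref{palstruct} every such $ki\oplus Q$ is a factor of $W^{(k)}$. Given $n$ in the support, choose $Q$ and a decomposition $Q=\alpha P\alpha^R$ with $|P|=n$ witnessing $n\in L(P^{(k)}_i)$; then for every admissible $i$ the word $ki\oplus P$ is a palindromic factor of $W^{(k)}$ of length $n$ (the reversal is preserved under $ki\oplus$, and it sits inside the factor $ki\oplus Q$), and distinct $i$ give distinct words since their first letters $ki+P[1]$ differ. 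Hence $P_{W^{(k)}}(n)=\infty$ on the support; and for $n\ge 2$ outside the support the first step forbids any palindromic factor of that length, so $P_{W^{(k)}}(n)=0$.

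The arithmetic with the four sets $L(P^{(k)}_i)$ is routine; the point that deserves care is the opening reduction, namely justifying that ``maximal palindromic factor'' is the right handle for the \emph{infinite} word $W^{(k)}$ rather than merely for the finite $W^{(k)}_n$, which rests on the a priori length bound obtained from the reduction lemma. One should also check that the short cases are consistent: length $2$ is realized only via $ki\oplus(00)=(ki)(ki)$ with $i\ge 1$ (so $00$ itself is not a factor, in agreement with Lemma \ref{00}), and no even length $\ge 4$ occurs because $2$ is the unique even element of $\bigcup_{i=1}^4 L(P^{(k)}_i)$.
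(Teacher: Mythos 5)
Your proposal is correct and follows essentially the same route as the paper: combine the classification of maximal palindromic factors (Theorem \ref{palstruct}) with the length sets of Lemma \ref{sizepi}, observe that $\bigcup_{i=1}^{4}L(P^{(k)}_i)=\{2,3,5,\ldots,3\cdot 2^{k-1}-1\}$, and get infinitude from the infinitely many $ki\oplus$-translates. You merely supply details the paper leaves implicit (well-definedness of maximal palindromes in the infinite word, the explicit union of the $L(P^{(k)}_i)$, and the distinctness of translates), all of which check out.
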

\begin{proof}
	{By Theorem \ref{palstruct} and using Lemma \ref{sizepi}, we found that if $n\not\in \{2,3,5,\ldots, 3.2^{k-1}-1\}$, then there is no palindrome factor in  $W^{(k)}$ of length $n$, in other words $P_{{\tiny W^{(k)}}}(n)=0$.
     If $n\in \{2,3,5,\ldots, 3.2^{k-1}-1\}$, then by Theorem \ref{palstruct} and Lemma \ref{sizepi}, $W^{(k)}$ has at least one palindrome factor of length $n$ or equivalently $P_{{\tiny W^{(k)}}}(n)\neq 0$. By Definition \ref{defP} and \ref{defLP}, it is easy to see that in this case $P_{{\tiny W^{(k)}}}(n)=\infty$.}
\end{proof}

\bibliographystyle{acm}
\bibliography{kbonacci20}

\end{document}